\theoremstyle{plain}
\newtheorem{theorem}{Theorem}[section]
\newtheorem{corollary}[theorem]{Corollary}
\newtheorem{lemma}[theorem]{Lemma}
\newtheorem{proposition}[theorem]{Proposition}
\newtheorem{conjecture}[theorem]{Conjecture}
\theoremstyle{definition}
\newtheorem{definition}[theorem]{Definition}
\newtheorem{remark}[theorem]{Remark}
\newtheorem*{notation}{Notation}
\DeclareMathOperator{\disc}{disc} 
\DeclareMathOperator{\GL}{GL} \DeclareMathOperator{\Ker}{ker}
 \DeclareMathOperator{\Aut}{Aut}
\DeclareMathOperator{\Hom}{Hom} 
\DeclareMathOperator{\Sym}{Sym} 
 \DeclareMathOperator{\Gal}{Gal}
\DeclareMathOperator{\rank}{rank}
\DeclareMathOperator{\Frob}{Frob}
\DeclareMathOperator{\End}{End}
\DeclareMathOperator{\ad}{ad}\DeclareMathOperator{\spec}{Spec}
\DeclareMathOperator{\Ht}{ht}
\DeclareMathOperator{\et}{\mbox{\rm{\'{e}t}}}\DeclareMathOperator{\mult}{\mbox{\rm{mult}}}
\newcommand{\OO}{\mathcal{O}}
\newcommand{\QQ}{\mathbf{Q}}
\newcommand{\ZZ}{\mathbf{Z}}
\newcommand{\CC}{\mathbf{C}}
\newcommand{\F}{\mathbf{F}}
\newcommand{\NN}{\mathbf{N}}
\newcommand{\p}{\mathfrak{p}}
\newcommand{\PP}{\mathbf{P}}
\newcommand{\M}{\mathcal{M}}
\newcommand{\ur}{\mathrm{ur}}
\newcommand{\bad}{\mathrm{bad}}
\newcommand{\good}{\mathrm{good}}
\newcommand{\ol}[1]{\overline{{#1}}}
\begin{document}

%
%

\title[Local torsion on abelian surfaces]{LOCAL TORSION ON ABELIAN SURFACES WITH REAL MULTIPLICATION BY $\QQ(\sqrt{5})$}

\author{ADAM GAMZON}

\date{}

\maketitle

\begin{center}\footnote{The author acknowledges with thanks that part of the research on which the article is based was supported by a Fulbright Post-doctoral Fellowship, awarded by the Fulbright Commission in Israel, the United States-Israel Educational Foundation.}\end{center}
\begin{abstract}
Fix an integer $d\geq 1$.  In 2008, David and Weston showed that, on average, an elliptic curve over $\QQ$ picks up a nontrivial $p$-torsion point defined over a finite extension $K$ of the $p$-adics of degree at most $d$ for only finitely many primes $p$.  This paper proves an analogous averaging result for principally polarized abelian surfaces $A$ over $\QQ$ with real multiplication by $\QQ(\sqrt{5})$ and a level-$\sqrt{5}$ structure.  Furthermore, we indicate how the result on abelian surfaces with real multiplication relates to the deformation theory of modular Galois representations.

\end{abstract}

\section{Introduction}

Let $A$ be an abelian variety over $\QQ$ with an endomorphism ring that embeds into a totally real field. The goal is to give a result providing heuristics for the following conjecture.

\begin{conjecture}[David, Weston]\label{mainconj}
Fix an integer $d\geq 1$.  Then there are finitely many primes $p$ such that $A(K)[p]\neq 0$ where $K$ is a finite extension of $\QQ_p$ of degree at most $d$.
\end{conjecture}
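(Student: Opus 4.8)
\medskip
\noindent\textbf{A strategy, and the obstruction.} The plan is to translate Conjecture~\ref{mainconj} into a statement about the mod-$p$ Galois representations attached to $A$, and then to isolate, for all large $p$, a single ``exceptional'' local configuration whose finiteness is the real content of the conjecture.

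First I would reduce to modular forms. Since $A/\QQ$ is a principally polarized abelian surface with real multiplication by $\OO=\ZZ[\tfrac{1+\sqrt5}{2}]$ and a level-$\sqrt5$ structure, it is of $\GL_2$-type; assuming (or invoking) modularity, $A$ is isogenous over $\QQ$ to the abelian variety $A_f$ attached to a weight-$2$ newform $f$ with coefficient field $\QQ(f)=\QQ(\sqrt5)$, and for each prime $\p$ of $\OO$ above a rational prime $p$ the $\p$-torsion $A[\p]$ is a two-dimensional $\OO/\p$-vector space carrying the mod-$\p$ representation $\bar\rho_{f,\p}\colon G_\QQ\to\GL_2(\OO/\p)$. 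Because the $\OO$-action on $A[p]=\bigoplus_{\p\mid p}A[\p]$ (for $p\ne5$) commutes with $G_\QQ$, one checks that $A(K)[p]\ne0$ if and only if $\bar\rho_{f,\p}|_{G_K}$ fixes a nonzero vector for some $\p\mid p$; so it suffices to bound the set of $p$ for which some $\bar\rho_{f,\p}|_{G_K}$ has a fixed line with $[K:\QQ_p]\le d$.

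Next I would dispose of the bad and the non-ordinary primes. The finitely many primes dividing the conductor of $A$, together with $p=5$, are set aside. For $p$ of good reduction with $p>d+1$: if $f$ is \emph{non-ordinary} at $\p$, then $\bar\rho_{f,\p}|_{I_{\QQ_p}}$ is irreducible with image in a nonsplit torus of $\GL_2(\OO/\p)$; an index-$\le d$ subgroup of it is still nontrivial once $p$ is large, and a nontrivial subgroup of a nonsplit torus acts freely on nonzero vectors, so $\bar\rho_{f,\p}|_{I_K}$ has no fixed line for any $K$ with $[K:\QQ_p]\le d$. Such $p$ are not exceptional. If $f$ is \emph{ordinary} at $\p$, then $\bar\rho_{f,\p}|_{G_{\QQ_p}}$ is an extension of an unramified character $\lambda(\alpha)$ by $\chi_{\mathrm{cyc}}\lambda(\beta)$, where $\lambda(\alpha)\colon\Frob_p\mapsto\bar a_p\in(\OO/\p)^\times$ is the reduction of the unit-root character (so $\alpha\equiv a_p$ and $\beta\equiv0$ modulo $\p$). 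Since $[K:\QQ_p]\le d<p-1$ forces $\mu_p\not\subset K^{\ur}$, the character $\chi_{\mathrm{cyc}}\lambda(\beta)$ is nontrivial on $I_K$; hence any $G_K$-fixed vector must map isomorphically onto a $G_K$-fixed vector of the unramified quotient, which forces simultaneously $\lambda(\alpha)|_{G_K}=1$ \emph{and} the splitting of this extension over $K$. Replacing $K$ by its maximal unramified subfield, $\lambda(\alpha)|_{G_K}=1$ says exactly that $\ord(\bar a_p)\le d$ in $(\OO/\p)^\times$, i.e.\ that $\bar a_p$ is a root of $\prod_{j=1}^{d}(X^{j}-1)$.

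The outcome is that, apart from finitely many primes, every exceptional $p$ obeys two Lang--Trotter-type constraints: $a_p$ reduces modulo some $\p\mid p$ into a fixed finite set of ``root-of-unity'' residues, \emph{and} the ordinary filtration of $\bar\rho_{f,\p}$ is locally split at $p$. Here lies the main obstacle, and it is a serious one: for a fixed $f$ without additional symmetry, proving that either of these constraints holds for only finitely many $p$ is out of reach at present --- it would follow from sufficiently strong effective forms of the Lang--Trotter and Sato--Tate philosophy, and the extra $\OO$-structure, while it shrinks the relevant residue sets, does not resolve the issue. This is precisely why David--Weston, and the present paper, instead establish an \emph{averaged} version of the conjecture (bounding the density of exceptional $p$, respectively the average number of exceptional $p$ over families of such $A$ ordered by height). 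A complete proof of Conjecture~\ref{mainconj} as written would require genuinely new input on these local-splitting and congruence questions.
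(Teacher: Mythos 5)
The statement you were asked about is a \emph{conjecture}, and the paper never proves it: its only theorem (Theorem~\ref{mainthm}) is an averaged form over the height-ordered family of RM abelian surfaces, obtained from Lemmas~\ref{W2condition}, \ref{liftsofabsurrm}, \ref{Raynaudram} plus Schanuel's counting, not from any argument for an individual $A$. So your proposal, which ends by conceding that Conjecture~\ref{mainconj} for a fixed $A$ is out of reach and that only an averaged statement is accessible, reaches the correct verdict; it is an honest analysis of the obstruction rather than a proof, and no proof should have been expected.

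Your local analysis is also essentially the one the paper itself sketches in the remark following Corollary~\ref{ramifiedcase}: for good ordinary $p$ with $d<p-1$, the restriction of the mod~$\p$ representation to $G_p$ is an extension of an unramified character by a twist of the cyclotomic character, and a nonzero $K$-point of $A[\p]$ with $[K:\QQ_p]\le d$ forces both that the unramified character (Frobenius acting by the unit root, congruent to $a_p$ mod $\p$) becomes trivial over $K^{\ur}$ and that the extension splits there --- exactly your two Lang--Trotter-type constraints. The non-ordinary case (image of inertia in a nonsplit torus, hence no fixed lines over small-degree extensions) and the reduction of ramified $K$ to $K^{\ur}$ when $e<p-1$ likewise parallel Lemma~\ref{Raynaudram}; the paper's quantitative substitute for your ``splitting of the filtration'' condition is the count of lifts with elevated $p$-rank in Lemma~\ref{liftsofabsurrm}, which is what makes the average in Theorem~\ref{mainthm} converge. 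Two small caveats: Conjecture~\ref{mainconj} is stated for a general abelian variety over $\QQ$ whose endomorphism ring embeds into a totally real field, whereas you silently specialize to the principally polarized $\QQ(\sqrt{5})$-RM surfaces with level-$\sqrt{5}$ structure and invoke modularity, so even your reduction does not cover the conjecture's stated scope; and the clause ``any $G_K$-fixed vector must map isomorphically onto a fixed vector of the unramified quotient'' should be justified by noting that the subrepresentation has nontrivial $I_K$-action because $e(K/\QQ_p)\le d<p-1$, which you do state but should make the formal pivot of the argument.
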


More specifically, we prove that this conjecture holds on average for abelian surfaces with real multiplication by $\QQ(\sqrt{5})$. Indeed, \cite{Mano} shows that the fine moduli space, $X_\QQ$, for principally polarized abelian surfaces over $\QQ$ with real multiplication by $\QQ(\sqrt{5})$ is a double cover of $\PP^2_\QQ$ (ramified over a rational curve of degree 10).  This essentially means that pairs of such abelian surfaces are parameterized by points in $\PP^2(\QQ)$.  It is clear from construction (see Section 5) that one of the abelian surfaces in a fiber of this map has a $p$-torsion point if and only if the other one does.  Let $[a:b:c]$ be homogeneous coordinates on $\PP^2$, let $A_{[a:b:c]}$ be any abelian surface in the fiber over $[a:b:c]$, and let $$\pi^d_{[a:b:c]}(x) = \#\{p\leq x: \mbox{$A_{[a:b:c]}(K)[p] \neq 0$ and $[K:\QQ_p] \leq d$}\}.$$  Furthermore, we can assume that, after scaling, $a,b,c \in \ZZ$ and $\gcd(a,b,c)=1$. Define a height function $H$ on $\PP^2(\QQ)$ by $H([a:b:c]) = \max\{|a|,|b|,|c|\}$. Then we have the following theorem.

\begin{theorem}\label{mainthm}
Let $S_B = \{[a:b:c] \in \PP^2(\QQ): H([a:b:c]) \leq B\}$.  If $B \geq
x^{4/3 + \varepsilon}$ for some $\varepsilon > 0$ then
$$\frac{1}{\#S_B} \sum_{[a:b:c] \in S_B} \pi^d_{[a:b:c]}(x)\ll_d 1
\mbox{ \ as \ } x\rightarrow \infty.$$
\end{theorem}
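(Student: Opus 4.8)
The plan is to recast Theorem~\ref{mainthm} as a count of ``bad pairs'' on the moduli space and then reduce it to a lattice--point estimate for which the hypothesis on $B$ is calibrated. Writing $A_P$ for an abelian surface in the fibre over $P=[a:b:c]$, the sum $\sum_{P\in S_B}\pi^d_P(x)$ counts the pairs $(p,P)$ with $p\le x$, $P\in S_B$, and $A_P(K)[p]\neq 0$ for some $K/\QQ_p$ of degree $\le d$; call these the \emph{bad pairs}, so the goal is $\#\{\text{bad pairs}\}\ll_d\#S_B$. Bad pairs with $p\le c(d)$, for $c(d)$ depending only on $d$, contribute at most $c(d)$ to each $\pi^d_P(x)$, hence $\ll_d\#S_B$ altogether. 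Bad pairs where $A_P$ has potentially multiplicative (but not good) reduction at $p$ are isolated next: the toric part of the N\'eron model carries $\mu_p$, which never acquires a point over a field of ramification $<p-1$, so $A_P(K)[p]$ meets only the unramified / component--group part, and for $p>d$ this forces $p\mid v_p(\Delta(a,b,c))$ --- hence $p^{p}\mid\Delta(a,b,c)$ --- for a fixed nonzero polynomial $\Delta$ of bounded degree built from the branch locus of $X_\QQ\to\PP^2_\QQ$; since $|\Delta(a,b,c)|\ll B^{O(1)}$ this needs $p\ll\log B$, and for such $p$ the number of $P\in S_B$ with $p^{p}\mid\Delta(a,b,c)$ is $\ll B^{3}/p^{p}$ (a determinant--method bound of size $O(B^{3-\delta})$ in the narrow range where $p^{p}>B$), so summing over $p$ again gives $\ll\#S_B$. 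What remains --- including potentially good but not good reduction, which I would reduce to it after a tame extension of bounded degree --- is the case of good reduction at a large prime $p$.

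For such a pair, with $K/\QQ_p$ of ramification index $e\le d<p-1$, the formal group of $A_P$ over $\OO_K$ has no nonzero $p$-torsion (Newton polygon of $[p]$), so reduction embeds $A_P(K)[p]\hookrightarrow\tilde A_P(\F_{p^{f}})[p]$ with $f=f(K/\QQ_p)\le d$; in particular $p\mid\#\tilde A_P(\F_{p^{f}})$. Here the real multiplication enters: $\tilde A_P$ carries an $\OO_F$-action over $\F_p$, so geometric Frobenius satisfies $\varphi^{2}-c_p\varphi+p=0$ in $\End(\tilde A_P)\otimes\QQ$ for some $c_p=c_p(P)\in\OO_F$ with $|\sigma(c_p)|\le 2\sqrt p$ at both real places $\sigma$ of $F$. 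Inspecting the eigenvalues of $\varphi$ on the $\mathfrak p$-torsion for primes $\mathfrak p\mid p$, the divisibility $p\mid\#\tilde A_P(\F_{p^{f}})$ forces, for some $\mathfrak p\mid p$ and some $f\le d$, that the image of $c_p$ in the residue field $\OO_F/\mathfrak p$ is a root of unity of order dividing $f$.

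The heart is the count that follows. The Weil bound confines $c_p(P)$ to the Minkowski box $\{\,|\sigma(c)|\le 2\sqrt p\,\}$, of covolume $\asymp p$; on the other hand every prime $\mathfrak p\mid p$ is ``balanced'' in the Minkowski embedding of $\OO_F$, since any nonzero $x\in\mathfrak p$ has $\|x\|^{2}\ge 2|\N_{F/\QQ}(x)|\ge 2p$, so $\mathfrak p$ has both successive minima $\asymp\sqrt p$. Because $\QQ(\sqrt5)$ has class number one and a fundamental unit of norm $-1$, it follows that only $\ll_d 1$ values $\gamma\in\OO_F$ can occur as $c_p(P)$ for a bad pair with this $p$. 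For each forbidden $\gamma$, the set of $P\in X\otimes\F_p$ whose reduction has Frobenius $\gamma$ is a fixed closed subscheme $Z_{p,\gamma}$ of $X\otimes\F_p$ (an isogeny--class locus, cut out by Hecke correspondences), and the bad pairs with this $p$ and $\gamma$ are exactly the $[a:b:c]\in S_B$ reducing into the image of $Z_{p,\gamma}$ in $\PP^2(\F_p)$. One bounds the number of such lifts by a lattice--point count in $[-B,B]^{3}$ subject to a congruence modulo $p$: the main term is proportional to $\bigl(\#Z_{p,\gamma}(\F_p)\bigr)B^{3}/p^{2}$, and the error is of smaller total order, uniformly for $p\le x$, precisely when $B\ge x^{4/3+\varepsilon}$, which forces reduction from height--$\le B$ points of $\PP^2(\QQ)$ to $\PP^2(\F_p)$ to be quantitatively equidistributed. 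Summing over the $\ll_d 1$ values $\gamma$ and over $p\le x$ then produces a bound $\ll_d\#S_B$.

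The step I expect to be the main obstacle is this last one, on two counts. First, one must make rigorous that the local torsion condition confines $c_p$ to $\ll_d 1$ residue classes, and, after the Weil bound, to $\ll_d 1$ actual elements of $\OO_F$; this is where the arithmetic of $\QQ(\sqrt5)$ (trivial class group, fundamental unit of norm $-1$) is used essentially, and where the split, inert, and ramified cases for $p$ must be kept apart. Second, one needs equidistribution of $S_B$ modulo every prime $p\le x$ with error terms sharp enough that the resulting sum over $p$ stays $O_d(1)$ on average over $S_B$; the lower bound $B\ge x^{4/3+\varepsilon}$ is exactly what makes this go through. The bad--reduction analysis, though more elementary, should be carried out first so as to isolate the essential good--reduction case.
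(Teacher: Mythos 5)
Your good-reduction analysis is where the proposal breaks, and the issue is more fundamental than the class-number estimate you flag as the main obstacle. The criterion you extract from the reduction map --- that $p\mid\#\tilde A_P(\F_{p^f})$, equivalently that $c_p$ reduces to a root of unity of order $\le d$ modulo some $\mathfrak p\mid p$ --- is a condition on $A_P$ \emph{modulo $p$}. It is necessary for $A_P(K)[p]\neq 0$, but far from sufficient: the injection $A_P(K)[p]\hookrightarrow\tilde A_P(\F_{p^f})[p]$ furnished by the triviality of the formal-group torsion is generically not surjective. The paper's Lemma~\ref{W2condition} shows the correct detector is a condition \emph{modulo $p^2$}, namely $\rank_p A_P(W_2)\ge 2d+1$, and Corollary~\ref{liftingcor} quantifies the gap: of the $p^2$ lifts to $\ZZ/p^2$ of a given $\F_p$-surface already satisfying your mod-$p$ condition, at most $2p-1$ satisfy the elevated-rank condition. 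So your mod-$p$ event has probability on the order of $1/p$ over the mod-$p$ moduli space, whereas the actual event $A_P(K)[p]\neq 0$ has probability on the order of $1/p^2$; it is precisely the extra factor of $1/p$ coming from the lifting obstruction that makes the sum over $p$ converge.

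Concretely, your bound is $\sum_{p\le x}\sum_\gamma\#Z_{p,\gamma}(\F_p)\cdot B^3/p^2$ plus errors. With roughly $p^2$ RM surfaces over $\F_p$ distributed over roughly $p$ values of $c_p$ in the Weil box, a typical isogeny-class locus $Z_{p,\gamma}$ has on the order of $p$ points, not $\ll p^{1-\delta}$ as you would need; then $\sum_{p\le x}\#Z_{p,\gamma}(\F_p)/p^2$ grows like $\sum_{p\le x}1/p\sim\log\log x$, so the argument as written would only give $\ll_d\log\log x$. Proving $\#Z_{p,\gamma}(\F_p)=o(p)$ uniformly would require a serious class-number input and, in any case, would only paper over the real omission: you have discarded the mod-$p^2$ lifting obstruction, which is the heart of the result. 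The paper captures it via the reduction-exact sequence (Lemma~\ref{W2condition}), Serre--Tate theory, and restricted Honda systems (Lemma~\ref{liftsofabsurrm}, Lemma~\ref{nonordlifts}, Corollary~\ref{liftingcor}). Your treatment of ramification via formal-group torsion is in the spirit of Lemma~\ref{Raynaudram} and Corollary~\ref{ramifiedcase} and is fine; the bad-reduction argument via $p$-adic valuations of a discriminant polynomial is unnecessarily delicate and I do not see why a nontrivial $p$-torsion point over $K$ would force $p\mid v_p(\Delta)$ --- the paper simply bounds the contribution from the six cusps by reduction mod $p$ and the estimate $\sum_p 1/p^2<\infty$, which is cleaner and certainly suffices.
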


\begin{remark}
We use the notation $\ll_d$ in Theorem \ref{mainthm} to indicate that although $d$ is fixed in the theorem and througout the paper, the constant on the right hand side changes for different choices of $d$.
\end{remark}

The motivation for Conjecture \ref{mainconj} and Theorem \ref{mainthm} stems from results on elliptic curves and from a conjecture of Barry Mazur regarding the deformation theory of modular Galois representations.  In \cite{David}, David and Weston prove an analogous statement to Theorem \ref{mainthm} in the case of elliptic curves. Let $S_{A,B}$ be the set of all elliptic curves  of the form $y^2 = x^3 + ax +b$ where $|a| \leq A, |b|\leq B$ and let $\pi_E^d(x)$ be the number of primes $p \leq x$ such that $E$ has a nontrivial $p$-torsion point over a finite extension of $\QQ_p$ of degree at most $d$.

\begin{theorem}[David, Weston]\label{ellcurvesanalog}
Fix $d \geq 1$.  Assume $A,B \geq x^{7/4 + \varepsilon}$ for some $\varepsilon > 0$.  Then $$\frac{1}{\#S_{A,B}} \sum_{E \in S_{A,B}} \pi^d_{E}(x)\ll_d 1 \mbox{ \ as \ } x\rightarrow \infty.$$
\end{theorem}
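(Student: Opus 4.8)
The plan (carried out in \cite{David}) is to convert the statement into a counting problem for Weierstrass equations modulo $p^2$. The first and most substantial step is a purely local criterion for the exceptional primes of a fixed elliptic curve over $\QQ_p$. Concretely, I would establish that there is a constant $C_d$, depending only on $d$, such that for every prime $p > C_d$ the curve $E$ has a nonzero $p$-torsion point over some extension of $\QQ_p$ of degree at most $d$ if and only if (i) $E$ has good \emph{ordinary} reduction at $p$; (ii) the restriction $\ol{\rho}_{E,p}|_{G_{\QQ_p}}$ is \emph{split}, i.e.\ $E[p] \cong \F_p(\lambda_1) \oplus \F_p(\lambda_2)$ with $\lambda_2$ unramified and $\lambda_1\lambda_2$ the mod-$p$ cyclotomic character; and (iii) $\lambda_2$, equivalently $\ol{a}_p := a_p \bmod p \in \F_p^*$, has multiplicative order at most $d$. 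The ``only if'' direction is where the work lies: for supersingular or additive reduction the image of inertia is too large to admit a fixed line over a bounded extension once $p$ is large relative to $d$; for multiplicative reduction the Tate parameter produces a genuinely ramified $H^1$-class of order $p$; and in the ordinary non-split case the extension class of $E[p]$ lies in a one-dimensional $H^1(G_{\QQ_p}, \F_p(\lambda_1\lambda_2^{-1}))$ and restricts injectively to every finite extension, hence cannot be trivialized. One also invokes $p - 1 > d$ to rule out acquiring torsion inside the formal-group line. The key companion lemma is that, for $p > C_d$, conditions (i)--(iii) depend only on the reduction modulo $p^2$ of a minimal model of $E$; this rests on $E[p]$ being finite flat and a rigidity property of such group schemes modulo $p^2$, and it is what turns the eventual count into a congruence condition on $(a,b)$ modulo $p^2$.

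Next I would interchange the order of summation, writing $\sum_{E \in S_{A,B}} \pi_E^d(x) = \sum_{p \le x} \#\{E \in S_{A,B} : p \text{ is exceptional for } E\}$. The primes $p \le C_d$ and the primes of bad reduction are handled by easy bookkeeping: the former contribute at most $C_d \cdot \#S_{A,B}$ in total, and for the latter an elementary analysis of the Tate-curve and potentially-good cases shows $p$ can be exceptional only in degenerate situations (forcing $v_p(\Delta_E) \equiv 0 \bmod p$, hence $p^p \mid \Delta_E$), so the number of such pairs $(E,p)$ with $p \le x$ is $O_d(\#S_{A,B})$. It remains to bound, for a prime $C_d < p \le x$ of potential good reduction, the quantity $N_p := \#\{(a,b) : |a| \le A,\ |b| \le B,\ E_{a,b} \text{ exceptional at } p\}$.

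For the estimate of $N_p$ I would use the congruence description together with two classical inputs. First, the set of residues $t \in \F_p^*$ of order at most $d$ has size at most $\mathrm{lcm}(1,\dots,d) = O_d(1)$. Second, for each such $t$, Deuring's theorem counts the $(a,b) \in \F_p^2$ with $a_p(E_{a,b}) = t$ in terms of the Hurwitz--Kronecker class number $H(t^2 - 4p) \ll p^{1/2+\varepsilon}$, and imposing the splitting condition (ii)---one further nontrivial condition on $(a,b) \bmod p^2$ beyond the trace condition---divides this by $p$. Hence the exceptional locus modulo $p^2$ meets at most $O_d(p^{5/2+\varepsilon})$ of the $p^4$ residue classes. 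Counting the lattice points of the box in each class with the trivial error $O(A/p^2 + B/p^2 + 1)$ gives
\[
N_p \;\ll_d\; \frac{AB}{p^{3/2-\varepsilon}} \;+\; A\,p^{1/2+\varepsilon} \;+\; B\,p^{1/2+\varepsilon} \;+\; p^{5/2+\varepsilon}.
\]
Summing over $p \le x$, the first term sums to $O_d(AB)$ since $\sum_p p^{-3/2+\varepsilon}$ converges, and the remaining terms sum to $O_d\big((A+B)\,x^{3/2+\varepsilon} + x^{7/2+\varepsilon}\big)$. Dividing by $\#S_{A,B} \asymp AB$, the average is $\ll_d 1 + x^{3/2+\varepsilon}/A + x^{3/2+\varepsilon}/B + x^{7/2+\varepsilon}/(AB)$, which is $\ll_d 1$ exactly when $A, B \ge x^{7/4+\varepsilon'}$---the binding constraint being $AB \gg x^{7/2}$.

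The main obstacle, as indicated, is the local step: pinning down precisely which curves over $\QQ_p$ carry local $p$-torsion of bounded degree, and---crucially for the error analysis---showing that this property is insensitive to $E$ beyond its reduction modulo $p^2$. The finite-flat group scheme input there is the delicate part; once it is in hand, the rest is assembling the class-number bound and summing a convergent series, with the exponent $7/4$ emerging directly from the error estimate above.
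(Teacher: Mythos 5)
This paper does not actually prove Theorem \ref{ellcurvesanalog}: it is quoted from \cite{David}, so there is no in-paper proof to compare against, and the fair comparison is with the original David--Weston argument and with the paper's parallel proof of Theorem \ref{mainthm}. Your outline is, in substance, the David--Weston proof: a local criterion at primes of good reduction (ordinary reduction, split mod-$p$ representation with unramified quotient character of order at most $d$ --- equivalently the elevated-rank-mod-$p^2$ condition that appears here as Lemma \ref{W2condition}), reduction of ramified extensions to the unramified case using $p-1>d$ (the role played here by Lemma \ref{Raynaudram}), negligibility of bad primes via the Tate-curve divisibility $p\mid v_p(\Delta)$, and a bound of $O_d(p^{5/2+\varepsilon})$ on the exceptional residue classes modulo $p^2$, which after box-counting and summing over $p\le x$ produces precisely the binding constraint $AB\gg x^{7/2}$, i.e.\ the exponent $7/4$. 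One structural point worth making explicit: in the surface case treated in this paper, the trivial count over all of $\PP^2(\F_p)$ times the Honda-system lift count $2p-1$ suffices (Lemma \ref{asymptoticbounds}), because the parameter space is three-dimensional; for elliptic curves the analogous trivial bound $\nu_d(p)\ll p^3$ makes $\sum_{p\le x}\nu_d(p)/p^4$ diverge like $\log\log x$, so the Deuring/class-number saving you invoke is genuinely load-bearing, and your proposal correctly identifies it as such. Two details would need tightening in a full write-up: the count of Weierstrass pairs over $\F_p$ with prescribed trace is (number of isomorphism classes, i.e.\ $H(s^2-4p)$) times roughly $p$ models each, summed over the at most two integers $s\equiv t\bmod p$ in the Hasse interval --- your stated $O_d(p^{5/2+\varepsilon})$ implicitly uses this; and the assertion that the splitting condition cuts the $p^2$ lifts modulo $p^2$ down to $O(p)$ is exactly the elliptic-curve analogue of the lift counts proved here via restricted Honda systems (compare Proposition \ref{liftsofabsur}), or can be done as in \cite{David} by more explicit group-scheme/formal-group methods. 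With those points made precise, the numerology and the conclusion match the stated theorem.
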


Theorems \ref{mainthm} and \ref{ellcurvesanalog} relate to the deformation theory of Galois representations via a conjecture of Mazur and subsequent work on this conjecture by Weston.  Pick any newform $f = \sum a_nq^n$ of level $N$ and weight $k\geq 2$. Set $K_f = \QQ(\{a_n\})$.  It is well known that this is a number field.  Let $\p$ be any prime of the ring of integers $\OO_{K_f}$ lying over a prime $p \in \ZZ$.  Deligne constructed a (semi-simple) mod $\p$ representation $$\ol{\rho}_{f,\p}: \Gal(\QQ_{S\cup \{p\}}/\QQ) \rightarrow \GL_2(k_{f,\p})$$ where $S$ is a finite set of primes dividing $N$, $\QQ_{S\cup\{p\}}$ is the maximal algebraic extension of $\QQ$ unramified outside of $S\cup \{p\}$, and $k_{f,\p}$ is the residue field $\OO_{K_f}/\p$. 

In \cite{Mazur2}, Mazur conjectured that the deformation theory of the mod $\p$ Galois representation $\ol{\rho}_{f,\p}$ attached to a modular form $f$ is unobstructed for all but finitely many $\p$ when $f$ has weight 2.  Furthermore, it is known that the analogous statement when $f$ has higher weight is true (see \cite{Weston2}).  Let $A_f$ be the abelian variety associated to $f$.  This is an abelian variety  defined over $\QQ$ which can be realized as a quotient of the Jacobian of the modular curve $X_1(N)$.  Moreover, note that $A_f$ has dimension $[K_f:\QQ]$ and admits an action of $\OO_{K_f}$ (i.e., the ring of integers $\OO_{K_f} \hookrightarrow \End_\QQ(A_f)$).  Weston has shown \cite{Weston1} that if $K_f = \QQ$ (that is, $A_f$ is an elliptic curve) then Mazur's conjecture holds when $f$ has weight $2$ if and only if there are only finitely many primes $p$ such that $A_f(L)[p] \neq 0$ where $L$ is a quadratic extension of $\QQ_p$.  Furthermore, when $[K_f:\QQ] > 1$ (that is, when $A_f$ is a higher dimensional abelian variety) and $K_f$ is a totally real number field, we discuss in Section 2 why the same argument that Weston used for connecting unobstructedness to local torsion still holds mutatis mutandis.  Thus, assuming $K_f$ is totally real, Conjecture \ref{mainconj} implies that the deformation theory of $\ol{\rho}_{f,\p}$ is unobstructed for almost all $\p$.

Seen in this light, David and Weston's result on elliptic curves corresponds to the case $K_f = \QQ$ in Mazur's conjecture whereas our result for abelian surfaces with real multiplication by $\QQ(\sqrt{5})$ corresponds to the $K_f = \QQ(\sqrt{5})$ case of Mazur's conjecture.  Thus, the combination of the two results indicates that Mazur's conjecture should hold when $K_f$ is either $\QQ$ or $\QQ(\sqrt{5})$.

The proof of Theorem \ref{mainthm} primarily rests on three algebraic results: Lemmas \ref{W2condition}, \ref{liftsofabsurrm}, \ref{Raynaudram}.  Lemma \ref{W2condition} uses the reduction-exact sequence of an abelian surface defined over an unramified extension $K$ of $\QQ_p$ to give a mod $p^2$ condition for detecting when $A$ has a nontrivial $p$-torsion point defined over $K$.  Lemma \ref{liftsofabsurrm} employs restricted Honda systems over $W/p^2$ (where $W$ is the ring of integers of $K$) to count the number of isomorphism classes of lifts of abelian surfaces over $\F_p$ to $\ZZ/p^2$ that satisfy the condition from our first lemma.  Finally, Lemma \ref{Raynaudram} addresses the issue of the assumption in the first lemma that $K$ is an unramified extension of $\QQ_p$.  Specifically, it shows that if $A$ has a nontrivial $p$-torsion point over a ramified extension $K$ of $\QQ_p$ and $p - 1> d$ then this $p$-torsion point is actually defined over the maximal unramified subextension of $K$.  We then combine these algebraic results to reduce the main analytic calculation to a series of straightforward estimates.  Section 3 lays the groundwork for the lemmas of Section 4.  The proof of Theorem \ref{mainthm} is given in Section 5.

Many thanks are owed to Tom Weston for suggesting this problem and for numerous helpful comments.  I would also like to thank Jenia Tevelev and Siman Wong for several useful discussions.  Thank you to Brian Conrad for his instructive suggestions, especially regarding the use of Raynaud's theorem in Lemma \ref{Raynaudram}.  Finally, we thank the referee for a close and careful reading.

\begin{notation}
For a number field $K$, let $G_K := \Gal(\ol{K}/K)$.  We fix embeddings $\ol{\QQ} \hookrightarrow \ol{\QQ}_p$ and, consequently, fix subgroups $G_p := \Gal(\ol{\QQ_p}/\QQ_p) \hookrightarrow G_\QQ$.  For a Galois module $M$, denote its Tate twist by $M(1)$.
\end{notation}



\section{Galois Representations}

\subsection{$\lambda$-adic Galois representations} 

Let $A$ be an abelian variety of dimension $g$ over a number field $K$.  Assume that $A$ has (maximal) real multiplication by a totally real, degree $g$ extension $E$ of $\QQ$.  That is, assume there is a homomorphism $i:\OO_E \hookrightarrow \End_K(A)$ where $\OO_E$ is the ring of integers of $E$.  We call a prime ideal $\lambda$ in $\OO_E$ a prime of $E$ and let $E_\lambda$ and $\OO_{E,\lambda}$ denote the $\lambda$-adic completions of $E$ and $\OO_E$ respectively.

\begin{definition}
A $\lambda$-adic representation of $G_K$ on a finite dimensional $E_\lambda$ vector space $V$ is a continuous homomorphism $$\rho: G_K \rightarrow \Aut(V).$$
\end{definition}


The representations of interest to us are the ones attached the $\ell$-adic Tate module of $A$.

\begin{definition}
The \emph{$\ell$-adic Tate module} of $A$ is $$T_\ell(A) := \varprojlim A[\ell^n].$$
\end{definition}

Let $V_\ell := T_\ell(A) \otimes \QQ$.  It is well known that $T_\ell(A)$ is a free $\ZZ_\ell$-module of rank $2g$ and, hence, $V_\ell$ is a $2g$-dimensional $\QQ_\ell$ vector space.  Moreover, $G_K$ acts continuously on $A[\ell^n]$ for all $n$ and this action commutes with multiplication by $\ell$, so we get an $\ell$-adic representation $$\rho_\ell: G_K \rightarrow \Aut(T_\ell(A)) \subset \Aut(V_\ell).$$  

In our case, the inclusion $E \subset \End_K(A) \otimes \QQ$, gives even more structure to $T_\ell(A)$ and, hence, $V_\ell$.  Indeed, $\End_K(A) \otimes \QQ$ acts on $V_\ell$, so we can view $V_\ell$ as an $E_\ell := E \otimes \QQ_\ell$ module.  Furthermore, by definition, the endomorphisms in $\End_K(A)$ are defined over $K$, so the action of $G_K$ on $V_\ell$ is $E_\ell$-linear.  The decomposition $E_\ell = \prod_{\lambda|\ell} E_\lambda$ gives a decomposition $$V_\ell = \oplus_{\lambda|\ell} V_\lambda \mbox{\ \ \ where\ \ \ } V_\lambda := V_\ell \otimes_{E_\ell} E_\lambda.$$ Thus the $\ell$-adic representation $\rho_\ell$ can be decomposed as the sum of $\lambda$-adic representations $\rho_\lambda: G_K \rightarrow \Aut_{E_\lambda}V_\lambda$.

\begin{proposition}\label{vlstructure}
As an $E_\ell$-module, $V_\ell$ is free of rank $2$.  Moreover, the $E_\lambda$-dimension of $V_\lambda$ is $2$ for all $\lambda$.
\end{proposition}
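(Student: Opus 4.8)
The plan is to reduce the statement to elementary linear algebra over the field $E$ by passing to the complex numbers, where an abelian variety is a complex torus. Fix an embedding $K \hookrightarrow \CC$ and write $A(\CC) = \CC^g / \Lambda$, where $\Lambda := H_1(A(\CC), \ZZ)$ is a lattice of rank $2g$. Since the real multiplication $i : \OO_E \hookrightarrow \End_K(A)$ is defined over $K$, each such endomorphism acts on $A(\CC)$ and hence on $\Lambda$, making $\Lambda$ an $\OO_E$-module and $H := \Lambda \otimes_\ZZ \QQ = H_1(A(\CC),\QQ)$ a module over the field $E = \OO_E \otimes \QQ$. As a module over a field, $H$ is free, say of rank $r$, and comparing $\QQ$-dimensions gives $2g = \dim_\QQ H = r\,[E:\QQ] = rg$, so $r = 2$; that is, $H \cong E^2$ as $E$-modules.

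Next I would transport this to the $\ell$-adic side. From the uniformization one has $A[\ell^n] \cong \Lambda/\ell^n\Lambda$ compatibly with the $\OO_E$-action, and passing to the inverse limit gives an $\OO_E \otimes \ZZ_\ell$-module isomorphism $T_\ell(A) \cong \Lambda \otimes_\ZZ \ZZ_\ell$. Tensoring with $\QQ_\ell$ and using $H \cong E^2$ then yields \[ V_\ell \cong H \otimes_\QQ \QQ_\ell \cong E^2 \otimes_\QQ \QQ_\ell = E_\ell^2 \] as $E_\ell$-modules, which is precisely the assertion that $V_\ell$ is free of rank $2$ over $E_\ell$. For the ``moreover'' clause, recall $E_\ell = \prod_{\lambda \mid \ell} E_\lambda$ and $V_\lambda = V_\ell \otimes_{E_\ell} E_\lambda$; applying $- \otimes_{E_\ell} E_\lambda$ to $V_\ell \cong E_\ell^2$ and noting that this functor is simply projection onto the $\lambda$-component gives $V_\lambda \cong E_\lambda^2$, so $\dim_{E_\lambda} V_\lambda = 2$ for every prime $\lambda$ of $E$ above $\ell$.

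The only step requiring any care is the $\OO_E$-equivariance of the identifications above, and here it is immediate, since the $\OO_E$-action enters everything through its action on $\Lambda$; no deep input is needed. I would remark that one can bypass the complex embedding and argue purely $\ell$-adically: $T_\ell(A)$ is a finitely generated module over $\OO_E \otimes \ZZ_\ell = \prod_\lambda \OO_{E,\lambda}$, hence splits as $\prod_\lambda T_\lambda$ with each $T_\lambda$ torsion-free---and therefore free---over the discrete valuation ring $\OO_{E,\lambda}$. However, this by itself only gives $\dim_{E_\lambda} V_\lambda = r_\lambda$ subject to the single constraint $\sum_\lambda r_\lambda\,[E_\lambda : \QQ_\ell] = 2g$, and forcing each $r_\lambda$ to equal $2$ would still require extra structure, such as the $\OO_E$-equivariant $\ell$-adic Weil pairing coming from the principal polarization. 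The homological argument is therefore the more economical route, and I expect it to be entirely routine.
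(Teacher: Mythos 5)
Your argument is correct and self-contained; the paper itself gives no proof and simply refers to Theorem~2.1.1 of Ribet's 1976 paper, where essentially the same Betti-comparison argument appears. Your closing observation is also on point: the purely $\ell$-adic decomposition $T_\ell(A) = \oplus_{\lambda \mid \ell} T_\lambda$ over the discrete valuation rings $\OO_{E,\lambda}$ gives freeness of each factor but only the single constraint $\sum_\lambda r_\lambda\,[E_\lambda : \QQ_\ell] = 2g$ on the ranks, so one does need an extra input --- the comparison with $H_1(A(\CC),\QQ)$, or alternatively the $E$-bilinear Weil pairing coming from the polarization --- to force every $r_\lambda$ to equal $2$.
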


\begin{proof}
See Theorem 2.1.1 of \cite{Ribet}.
\end{proof}

\subsection{Unobstructedness and local invariants}  

Let $A = A_f$, $F = K_f$ and $\ol{\rho} = \ol{\rho}_{f,\lambda}$ where $A_f$, $K_f$ and $\ol{\rho}_{f,\lambda}$ are defined as in Section 1.  Assume that $f$ has weight 2.  Note that in this case, $\ol{\rho}$ is the mod $\ell$ reduction of the $\lambda$-adic representation of $A$ as described in Section 2.1 (see \cite[Section 9.5]{Diamond2}).  Let $S$ be the set of all primes dividing the level $N$ of $f$.  We say that the deformation theory of $\ol{\rho}$ is \emph{unobstructed} if $H^2(\Gal(\QQ_{S\cup\{\ell\}}/\QQ), \ad\ol{\rho}) = 0$, where $\ad\ol{\rho}$ denotes the adjoint representation of $\ol{\rho}$.  A combination of Poitou-Tate duality and results on Selmer groups reduces the problem of showing that $H^2(\Gal(\QQ_{S\cup\{\ell\}}/\QQ), \ad\ol{\rho}) = 0$ to the statement that $$H^0(G_p, \ol{\varepsilon}\otimes\ad\ol{\rho}) = 0 \mbox{ for all } p \in S\cup\{\ell\}$$ (see \cite[Section 2]{Weston2}).  Here $\ol{\varepsilon}$ denotes the mod $\ell$ reduction of $\varepsilon$, the $\ell$-adic cyclotomic character.

For simplicity of presentation, assume that $\ell$ is unramified in $E$.  Fix a principal polarization $\omega$ of $A$.  The Weil pairing $e^\omega_\lambda: T_\lambda(A) \times T_\lambda(A) \rightarrow \OO_{E,\lambda}(1)$, where $T_\lambda(A) := \varprojlim A[\lambda^n]$, is an alternating, Galois equivariant, perfect pairing (see \cite{Milne}).  (Here, as in the previous subsection, we use the decomposition $\OO_\ell := \OO_E \otimes_\ZZ \ZZ_\ell \cong \prod_{\lambda|\ell} \OO_{E_\lambda}$ to get a decomposition $$A[\ell^n] = \oplus_{\lambda|\ell} A[\lambda^n]$$ where $A[\lambda^n] := A[\ell^n] \otimes \OO_{E_\lambda}$.)  Using $e_\lambda^\omega$, we get an isomorphism of Galois modules $$\End(T_\lambda(A))(1) \cong T_\lambda(A) \otimes T_\lambda(A).$$  Note that $\ad\ol{\rho} \cong \End(A[\lambda])$ as Galois modules, so $$H^0(G_p, \ol{\varepsilon}\otimes\ad\ol{\rho}) \cong H^0(G_p, A[\lambda]\otimes A[\lambda]).$$  Since $e_\lambda^\omega$ is alternating, we further have $$H^0(G_p, A[\lambda]\otimes A[\lambda]) \cong H^0(G_p, \mu_\ell)\oplus H^0(G_p, \Sym^2A[\lambda]).$$

It is straightforward to see that for $p > 2$, $H^0(G_p,\mu_\ell) \neq 0$ if and only if $\lambda | p-1$.  Therefore, assuming that $\lambda$ does not divide $p-1$, we may focus on $H^0(G_p,\Sym^2A[\lambda])$.  From here, however, we may use the exact proof as in \cite[Lemma 10.15]{Weston1} to see that $H^0(G_p, \Sym^2A[\lambda]) \neq 0$ if and only if $A(L)$ has non-trivial $\lambda$-torsion for some quadratic extension $L$ of $\QQ_p$.  Thus using \cite[Proposition 3.2, Remark 3.3 and Proposition 5.3]{Weston2} for the cases where $p \neq \ell$, we have the following result.

\begin{proposition}\label{unobstructed}
The deformation theory of $\ol{\rho}_{f,\lambda}$ is unobstructed for all but finitely many $\lambda$ if and only if $A(L)[\lambda] \neq 0$ for some quadratic extension $L$ of $\QQ_\ell$.
\end{proposition}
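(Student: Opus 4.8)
The plan is to read the asserted equivalence as being between (i) the statement that $\ol{\rho}_{f,\lambda}$ is unobstructed for all but finitely many $\lambda$ and (ii) the statement that there are only finitely many $\lambda$ for which $A(L)[\lambda]\neq 0$ for some quadratic extension $L$ of $\QQ_\ell$ (the quantifier over $\lambda$ on the right being the natural companion of the ``all but finitely many $\lambda$'' on the left). Concretely I would show that, off a fixed finite set of $\lambda$, the condition ``$\ol{\rho}_{f,\lambda}$ is obstructed'' and the condition ``$A(L)[\lambda]\neq 0$ for some quadratic $L/\QQ_\ell$'' single out exactly the same primes $\lambda$. All the local ingredients are already recalled in the discussion preceding the statement, so what remains is bookkeeping in $\lambda$ together with assembling them. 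Namely: after discarding the finitely many $\lambda$ lying over $2$, over a prime ramified in $E$, or over a prime dividing $N$, and the finitely many $\lambda$ at which the Poitou--Tate and Selmer-group reduction of \cite[\S 2]{Weston2} fails --- here one uses Proposition \ref{vlstructure} to know that $\ad\ol{\rho}$ is the same four-dimensional self-dual Galois module as in the elliptic-curve case --- unobstructedness of $\ol{\rho}_{f,\lambda}$ is equivalent to $H^0(G_p,\ol{\varepsilon}\otimes\ad\ol{\rho})=0$ for all $p\in S\cup\{\ell\}$, and for each such $p$ the Weil pairing gives $H^0(G_p,\ol{\varepsilon}\otimes\ad\ol{\rho})\cong H^0(G_p,\mu_\ell)\oplus H^0(G_p,\Sym^2 A[\lambda])$, with $H^0(G_p,\Sym^2 A[\lambda])\neq 0$ if and only if $A(L)[\lambda]\neq 0$ for some quadratic $L/\QQ_p$, by the argument of \cite[Lemma 10.15]{Weston1}.

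First I would handle the primes $p\in S$. For such $p$ the term $H^0(G_p,\mu_\ell)$ is nonzero exactly when $\lambda\mid p-1$, i.e.\ when $\ell\mid p-1$, which for each of the finitely many $p\in S$ holds for only finitely many $\lambda$. For the symmetric-square term at $p\in S$ (so $p\neq\ell$) I would invoke \cite[Proposition 3.2, Remark 3.3 and Proposition 5.3]{Weston2}, which give that $H^0(G_p,\Sym^2 A[\lambda])=0$ --- equivalently $A(L)[\lambda]=0$ for every quadratic $L/\QQ_p$ --- for all but finitely many $\lambda$; the underlying reason is that $\QQ_p$ is fixed as $\lambda$ varies, so $A(L)$ runs over the finitely many groups attached to the finitely many quadratic extensions $L/\QQ_p$, each of which has finite torsion. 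Hence the set of $\lambda$ for which some $p\in S$ contributes an obstruction is finite.

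Next I would treat $p=\ell$. Here $H^0(G_\ell,\mu_\ell)=\mu_\ell(\QQ_\ell)=0$ for every odd $\ell$, since $[\QQ_\ell(\mu_\ell):\QQ_\ell]=\ell-1>1$, so this piece never contributes. Therefore, off the finite exceptional set, $\ol{\rho}_{f,\lambda}$ is obstructed if and only if $H^0(G_\ell,\Sym^2 A[\lambda])\neq 0$, and by \cite[Lemma 10.15]{Weston1} applied with $p=\ell$ this holds if and only if $A(L)[\lambda]\neq 0$ for some quadratic extension $L$ of $\QQ_\ell$. Combining the two paragraphs, the set of $\lambda$ at which $\ol{\rho}_{f,\lambda}$ is obstructed and the set of $\lambda$ for which $A(L)[\lambda]\neq 0$ for some quadratic $L/\QQ_\ell$ differ by a finite set; consequently $\ol{\rho}_{f,\lambda}$ is unobstructed for all but finitely many $\lambda$ if and only if, for only finitely many $\lambda$, $A(L)[\lambda]\neq 0$ for some quadratic extension $L$ of $\QQ_\ell$ --- which is the assertion. (Unlike the situation at $p\in S$, this is not an automatic condition: at $p=\ell$ the local field moves with $\lambda$.)

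The point requiring the most care is checking that Weston's elliptic-curve arguments transfer ``mutatis mutandis'' to $E_\lambda$-coefficients: that the Poitou--Tate/Selmer-group reduction of \cite[\S 2]{Weston2} and the self-duality isomorphism $\End(T_\lambda(A))(1)\cong T_\lambda(A)\otimes T_\lambda(A)$ together with the linear-algebra argument of \cite[Lemma 10.15]{Weston1} remain valid when the coefficient ring is $\OO_{E,\lambda}$ rather than $\ZZ_\ell$ --- which is precisely why Proposition \ref{vlstructure} (that $V_\lambda$ is free of rank two over $E_\lambda$, so $A[\lambda]$ is two-dimensional over $k_{f,\lambda}$) is invoked --- and, at $p=\ell$, that this identification is unaffected by $\ell$ being the residue characteristic. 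Once those transfers are in hand, the finiteness bookkeeping, including the input from \cite{Weston2} at the primes $p\in S$, is routine.
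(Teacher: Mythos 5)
Your proposal follows essentially the same route as the paper: the paper's argument is precisely the discussion preceding the proposition --- the Poitou--Tate/Selmer reduction of unobstructedness to $H^0(G_p,\ol{\varepsilon}\otimes\ad\ol{\rho})=0$ for $p\in S\cup\{\ell\}$, the polarization/Weil-pairing decomposition into $H^0(G_p,\mu_\ell)\oplus H^0(G_p,\Sym^2A[\lambda])$, the results of \cite{Weston2} for the finitely many exceptional $\lambda$ at $p\neq\ell$, and \cite[Lemma 10.15]{Weston1} at $p=\ell$ --- and your finiteness bookkeeping (including reading the ``if and only if'' as an equivalence of finiteness statements over $\lambda$, as in Weston's theorem quoted in the introduction) is exactly the intended content. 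No gaps; this is correct and matches the paper's proof.
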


\begin{remark}
The key ingredient that allows Proposition \ref{unobstructed} to follow so nicely from known results is the assumption that $A$ has real multiplication.  This enables us to focus on the components $A[\lambda]$ instead of the whole torsion subgroup $A[\ell]$.
\end{remark}

\begin{remark}
Of course, any $\lambda$-torsion point in $A(L)$ is an $\ell$-torsion point in $A(L)$, hence the connection between Proposition \ref{unobstructed} and our main result, Theorem \ref{mainthm}, on $\ell$-torsion over finite extensions of $\QQ_\ell$.
\end{remark}

\section{Dieudonn\'{e} Theory}

Let $k$ be a finite field of characteristic $p$, let $W:= W(k)$ be the ring of Witt vectors over $k$ and let $K$ be the field of fractions of $W$.  Set $W_n = W/p^n$.

One of our principal algebraic results regards counting the number of isomorphism classes of lifts of  an abelian variety over $k$ to $W_2$.  To do this one can either make use of crystalline Dieudonn\'{e} theory or of restricted Honda systems over $W_2$ (see \cite{Berbec}).  Both theories set up equivalencies of categories between Barsotti-Tate groups over $W_2$ and ``linear algebraic'' objects.  Since in addition to lifting Barsotti-Tate groups to $W_2$ we will also be interested in lifting information about group schemes over $k$ of $p$-powered order, we will use the theory of restricted Honda systems.  

We begin with a brief review of the classification of Barsotti-Tate groups over $k$ by Dieudonn\'{e}-modules.  One can think of the classical theory of complex Lie groups and Lie algebras, which (often) creates a dictionary between problems on Lie groups and linear algebra data coming from Lie algebras, as the meta-idea motivating the construction.  The main reference for this material is \cite{Fontaine}, but \cite{Conrad} is a good introduction to the theory and \cite{Pottharst} gives an accessible overview of the more general setting.

\subsection{Classical Dieudonn\'{e} theory}

\begin{definition}
A \emph{Barsotti-Tate group} $G$ over a scheme $S$ is a group scheme such that
\begin{itemize}
\item $G = \varinjlim G[p^n]$ where $G[p^n]$ is the kernel of multiplication by $p^n$,

\item multiplication by $p$ is an epimorphism on $G$,

\item $G[p]$ is a finite, locally-free group scheme.
\end{itemize}
\end{definition}

Since $G[p]$ is a finite, locally free group scheme, it follows that the order of $G[p]$ is of the form $p^h$ where $h$ is a locally constant function on $S$ with values in $\NN$ and, moreover, $G[p^n]$ has order $p^{nh}$.  In the case that $S = \spec k$, the function $h$ is constant and we call it the \emph{height} of $G$, denoted $\Ht(G)$.

\begin{remark} 
The primary examples of Barsotti-Tate groups we will be interested in are those associated to abelian schemes over $S = \spec k$.  Namely, $A[p^\infty] := \varinjlim A[p^n]$ for a $g$-dimensional abelian variety $A$ over $k$, $\QQ_p/\ZZ_p := \varinjlim \ZZ/p^n\ZZ$ and $\mu_{p^\infty} := \varinjlim \mu_{p^n}$.  Note that $\Ht(A[p^\infty]) = 2g, \Ht(\QQ_p/\ZZ_p) = 1$, and $\Ht(\mu_{p^\infty}) = 1$.
\end{remark}

\begin{remark}\label{BTgpdecomp}
Given a group scheme $G$ over $k$, there is a canonical splitting $$G \cong G_{\et} \times G_{\mult} \times G_{\mbox{ll}},$$ where $G_{\et}$ is the maximal \'{e}tale quotient of $G$, $G_{\mult}$ is the maximal multiplicative subgroup of $G$ and $G_{\mbox{ll}}$ is a group scheme with no non-trivial \'{e}tale quotient nor non-trivial multiplicative subgroup (see, for example, \cite{Demazure} or \cite{Demazure2}).  This decomposition will play an important role in the algebraic results of Section 4.
\end{remark}


Let $\sigma$ denote the automorphism on $W$ (and $K$) that extends the Frobenius automorphism $x \mapsto x^p$ on $k$.   Define the Dieudonn\'{e} ring to be $$D_k = W[F,V]/(FV - p)$$ where $F$ (for Frobenius) and $V$ (for Verschiebung) satisfy $F\alpha = \alpha^{\sigma}F$ and $V\alpha = \alpha^{\sigma^{-1}}V$ for all $\alpha \in W$.  We call a $D_k$-module a \emph{Dieudonn\'{e}} module.  One can associate to each Barsotti-Tate group $G$ over $\spec k$ the $D_k$-module $\M(G) := \Hom(G, \widehat{CW}_k)$ where $\widehat{CW}_k$ denotes the formal affine commutative $k$-group scheme representing the Witt covector $k$-group functor $CW_k$ and the $F$ and $V$ action come from their action on the functor $CW_k$.  This gives an antiequivalence of categories.

\begin{theorem}\label{Dmodpdiv}
The functor $G \leadsto \M(G)$ is an antiequivalence of categories between Barsotti-Tate groups over $k$ and $D_k$-modules that are free $W$-modules of rank $\Ht(G)$.
\end{theorem}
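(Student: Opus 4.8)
The plan is to reduce the statement to the case of finite commutative group schemes of $p$-power order over $k$, prove the corresponding antiequivalence there, and then recover the Barsotti-Tate case by passing to the limit. For the reduction, a Barsotti-Tate group is by definition $G = \varinjlim G[p^n]$ with each $G[p^n]$ finite locally free of order $p^{nh}$, so since $\M$ is contravariant one sets $\M(G) := \varprojlim \M(G[p^n])$ and the $D_k$-module structure is inherited levelwise. Conversely, from a $D_k$-module $M$ that is free of rank $h$ over $W$ one builds the system $n \mapsto M/p^n M$ of finite-length $D_k$-modules, applies the finite-level antiequivalence to obtain a compatible system of finite group schemes $G_n$ with $G_n = G_{n+1}[p^n]$, and takes $G = \varinjlim G_n$; one then checks the three defining axioms of a Barsotti-Tate group, the only nonformal point being that multiplication by $p$ is an epimorphism, which translates into surjectivity of $M \xrightarrow{p} M$ (true since $M$ is $W$-free), and that $\Ht(G) = h$, which follows once one knows $\M(G)/p\M(G) \cong \M(G[p])$ has $W$-length $h$.

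The heart of the matter is therefore the finite-level statement: $G \leadsto \Hom(G,\widehat{CW}_k)$ is an antiequivalence between finite commutative $k$-group schemes killed by a power of $p$ and $D_k$-modules of finite $W$-length, with $\mathrm{length}_W \M(G) = \log_p \#G$. Here I would follow Fontaine. First use the canonical decomposition $G \cong G_{\et}\times G_{\mult}\times G_{\mathrm{ll}}$ of Remark \ref{BTgpdecomp}, which is respected by $\M$ and which corresponds on the module side to splitting according to whether $F$ is bijective, $V$ is bijective, or both $F$ and $V$ are nilpotent. The étale case is essentially Galois theory: over $\ol k$ an étale group scheme is a finite abelian $p$-group, and one computes $\M$ on $\ZZ/p^n$ directly; the multiplicative case then follows by Cartier duality (with $\M(\mu_{p^n})$ read off from $\M(\ZZ/p^n)$). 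The local-local case is the genuinely hard one: one needs Fontaine's explicit analysis of the Witt covector functor $CW_k$ — its pro-representability by $\widehat{CW}_k$, the $F,V$-action on it, and the fact that the pairing $G \times \M(G) \to CW_k$ it induces is perfect — together with a dévissage down to the simple object $\alpha_p$, on which $\M(\alpha_p) = k$ with $F = V = 0$.

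Granting the behaviour of $\M$ on these building blocks, full faithfulness and essential surjectivity at finite level come by a standard dévissage: $\M$ is exact (the functor $\Hom(-,\widehat{CW}_k)$ is exact on finite $p$-power-order $k$-group schemes, $\widehat{CW}_k$ playing the role of an injective cogenerator), additive on the canonical decomposition, and correct on simple objects, so an induction on $\#G$ via the snake lemma and the five lemma yields bijectivity of $\M$ on $\Hom$-sets and hits every finite-length $D_k$-module; the length identity $\mathrm{length}_W\M(G) = \log_p \#G$ propagates the same way. Returning to Barsotti-Tate groups, $\M(G) = \varprojlim \M(G[p^n])$ is then a $D_k$-module, $p$-adically separated and complete, with $\M(G)/p^n\M(G) \cong \M(G[p^n])$ of $W$-length $nh$, which forces $\M(G)$ to be $W$-free of rank $h = \Ht(G)$; the quasi-inverse constructed above then exhibits the functor as an antiequivalence onto exactly the subcategory of such modules. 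I expect the main obstacle to be precisely the local-local case together with the foundational properties of $\widehat{CW}_k$ — representability and perfectness of the Witt-covector pairing — since that is where Fontaine's construction does its real work, everything else being formal dévissage once those inputs are in place.
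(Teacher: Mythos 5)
Your proposal is essentially a faithful outline of Fontaine's proof, which is exactly what the paper intends: the paper's ``proof'' of this theorem is simply the citation to \cite{Fontaine}, and your sketch (finite-level antiequivalence via the \'etale/multiplicative/local-local splitting with $\widehat{CW}_k$ as injective cogenerator, then passage to the limit) follows that source's strategy. One slip worth flagging: since $\M = \Hom(-,\widehat{CW}_k)$ is \emph{contravariant} and exact, the Barsotti--Tate axiom that $p\colon G\to G$ is an epimorphism translates to $p$ being \emph{injective} on $\M(G)$, not surjective; this is still what $W$-freeness of $M$ gives you (indeed $p$ is never surjective on a nonzero free $W$-module), so your argument goes through once the word is corrected.
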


\begin{proof}
See \cite{Fontaine}.
\end{proof}

The dictionary given by Theorem \ref{Dmodpdiv} between group schemes and linear algebra also successfully translates many group-scheme theoretic concepts into the Dieudonn\'{e} module world.

\begin{theorem}\label{etale-connected}
Let $G$ be a Barsotti-Tate group over $k$.  Then $G$ is \'{e}tale if and only if $F$ is bijective on $\M(G)$ and $G$ is connected if and only if the action of $F$ is topologically nilpotent.  Define $\M(G)^\ast = \Hom_W(\M(G),K/W)$ and let $F$ $($respectively $V$$)$ act on $\M(G)^\ast$ as $V^\ast$ $($respectively $F^\ast$$)$ where $V^\ast$ and $F^\ast$ denote the dual action of $V$ and $F$ on $\M(G)$.  This gives $\M(G)^\ast$ the structure of a $D_k$-module and, moreover, there is a natural isomorphism of $D_k$-modules $$\varphi_G: \M(G^\ast) \rightarrow \M(G)^\ast$$ where $G^\ast$ is the Serre dual of $G$.
\end{theorem}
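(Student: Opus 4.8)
The plan is to prove the three assertions in turn, reducing each either to a classical fact about finite commutative group schemes over $k$ or to a short piece of semilinear algebra. \emph{\'{E}tale and connected criteria.} I would first pass from Barsotti-Tate groups to finite level. Over the perfect field $k$ the connected-\'etale sequence $0 \to G^0 \to G \to G_{\et} \to 0$ is exact, with $G^0$ again Barsotti-Tate; a diagram chase with multiplication by $p$ gives $G[p]_{\et} \cong G_{\et}[p]$ and $G[p]^0 \cong G^0[p]$, and since a nonzero connected Barsotti-Tate group has nonzero $p$-torsion it follows that $G$ is \'etale (respectively connected) if and only if the finite group scheme $G[p]$ is. Now $\M(G[p]) \cong \M(G)/p\,\M(G)$ is a finite-dimensional $k$-vector space (apply the exact functor $\M$ to $0 \to G[p] \to G \xrightarrow{\,p\,} G \to 0$), and the classical Dieudonn\'e theory of finite commutative group schemes over $k$ says that $G[p]$ is \'etale if and only if $F$ is bijective on $\M(G)/p\,\M(G)$, and $G[p]$ is connected if and only if $F$ is nilpotent there. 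Since a $\sigma$-semilinear endomorphism of the finite free $W$-module $\M(G)$ is bijective exactly when it is bijective modulo $p$, and is topologically nilpotent exactly when it is nilpotent modulo $p$, assembling these equivalences yields the first two statements.

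\emph{The $D_k$-module structure on $\M(G)^\ast$.} Since $\sigma$ preserves $W$ it induces an automorphism of $K/W$. Using it, for $\phi \in \M(G)^\ast$ and $m \in \M(G)$ I would set $(F\phi)(m) := \sigma\bigl(\phi(Vm)\bigr)$ and $(V\phi)(m) := \sigma^{-1}\bigl(\phi(Fm)\bigr)$ -- these are precisely the dual actions $V^\ast$ and $F^\ast$ appearing in the statement. A one-line computation gives $F(\alpha\phi) = \alpha^{\sigma}(F\phi)$ and $V(\alpha\phi) = \alpha^{\sigma^{-1}}(V\phi)$ for $\alpha \in W$, so the semilinearity relations defining $D_k$ hold, while $(FV\phi)(m) = \sigma\bigl((V\phi)(Vm)\bigr) = \phi(FVm) = \phi(pm) = p\,\phi(m)$ and symmetrically $VF = p$. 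Hence $\M(G)^\ast$ is a $D_k$-module; one checks additionally that it is free over $W$ of rank $\Ht(G)$, so by Theorem \ref{Dmodpdiv} it is $\M(H)$ for some Barsotti-Tate group $H$.

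\emph{Compatibility with Serre duality.} For the final isomorphism I would again descend to finite level: from $G = \varinjlim G[p^n]$ one gets $\M(G) = \varprojlim \M(G[p^n])$ with surjective transition maps and $\M(G)/p^n\,\M(G) \cong \M(G[p^n])$, whereas $G^\ast = \varinjlim G[p^n]^\vee$ with transition maps Cartier-dual to the surjections $G[p^{n+1}] \to G[p^n]$ given by multiplication by $p$. The essential ingredient is the compatibility of the Dieudonn\'e functor with Cartier duality of finite group schemes: there is a natural isomorphism $\M(H^\vee) \cong \Hom_W\bigl(\M(H), K/W\bigr)$ intertwining $F$ on the source with $V^\ast$ on the target (and $V$ with $F^\ast$), deduced from a self-pairing property of the Witt-covector functor $\widehat{CW}_k$ used to define $\M$. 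Applying this with $H = G[p^n]$, identifying the two sides with $\M(G^\ast)/p^n$ and with the $p^n$-torsion of $\Hom_W\bigl(\M(G), K/W\bigr)$ respectively, and taking $\varprojlim_n$ produces $\varphi_G\colon \M(G^\ast) \xrightarrow{\ \sim\ } \M(G)^\ast$. I expect this last step to be the main obstacle: the real work is in setting up the finite-level Cartier-duality isomorphism with the correct Tate twist, getting the Frobenius--Verschiebung bookkeeping to come out as $F \leftrightarrow V^\ast$ rather than $F \leftrightarrow F^\ast$, and then checking that the identification is compatible with the multiplication-by-$p$ transition maps so that it survives the inverse limit -- which is exactly the point where one leans on the fine structure of $\widehat{CW}_k$ from \cite{Fontaine}.
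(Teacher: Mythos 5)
The paper itself offers no proof of this theorem, simply writing ``Again, see \cite{Fontaine},'' so your detailed argument is genuinely filling in material the paper leaves implicit. Your reduction of the \'{e}tale/connected criteria to the finite group scheme $G[p]$, the observation that a $\sigma$-semilinear endomorphism of a finite free $W$-module is bijective (resp.\ topologically nilpotent) iff it is so mod $p$, and the direct verification that the dual actions $V^\ast, F^\ast$ satisfy the $D_k$-relations are all correct and are essentially the right way to proceed.

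There is, however, a genuine gap in the duality portion, and tracing it carefully actually exposes a misstatement in the theorem as printed. You assert that $\M(G)^\ast = \Hom_W(\M(G), K/W)$ is ``free over $W$ of rank $\Ht(G)$.'' This is false: since $\M(G)\cong W^h$ with $h=\Ht(G)$, one has $\M(G)^\ast \cong (K/W)^h$, a $p$-torsion, $p$-divisible $W$-module, not a free one --- already for $G=\mu_{p^\infty}$ it is $K/W$ itself. In particular it cannot be $\M(H)$ for any Barsotti-Tate group $H$ under Theorem \ref{Dmodpdiv}. Relatedly, your closing $\varprojlim_n$ does not output what you claim: the inverse limit $\varprojlim_n \Hom_W(\M(G),K/W)[p^n]$ with multiplication-by-$p$ transition maps is the Tate module of $\M(G)^\ast$, which is $\Hom_W(\M(G),W)$, not $\Hom_W(\M(G),K/W)$. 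What this reveals is that the statement here --- which is verbatim the finite group scheme version (Theorem \ref{ffgetaleconn}) with ``Cartier dual'' replaced by ``Serre dual'' --- has a slip: for Barsotti-Tate groups the correct target of the duality is $\Hom_W(\M(G),W)$ (a finite free $D_k$-module of rank $\Ht(G)$), with $K/W$ reserved for the finite flat case. Your finite-level strategy and the inverse-limit compatibility check are exactly the right ideas; once you replace $K/W$ by $W$ in the target, your $\varprojlim$ computation goes through and produces the natural isomorphism $\M(G^\ast)\cong\Hom_W(\M(G),W)$ that the theorem should assert.
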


\begin{proof}
Again, see \cite{Fontaine}.
\end{proof}

Fontaine also developed a theory for finite flat $W$-group schemes $G$.  He classifies these via their closed fibers and some ``extra data'' (which we will soon make more explicit).  Similar to the theory for Barsotti-Tate groups, one can associate to $G_k$ a finite Dieudonn\'{e} module $\M(G_k)$.  By a finite $D_k$-module, we mean a $D_k$-module $M$ with finite $W$-length, denoted $\ell_W(M)$.  Fontaine \cite{Fontaine} then shows that $\M(G_k)$ satisfies analogues to Theorems \ref{Dmodpdiv} and \ref{etale-connected}.

\begin{theorem}\label{ffgsoverk}
Let $G$ be a finite $k$-group scheme of $p$-power order.  Then the functor $G \leadsto \M(G)$ is an antiequivalence of categories between finite $k$-group schemes of $p$-power order and finite $D_k$-modules.  Moreover, the order of $G$ equals the order of $\M(G)$ $($i.e., $p^{\ell_W(\M(G))}$$)$.
\end{theorem}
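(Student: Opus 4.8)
The plan is to deduce the finite case from the Barsotti-Tate case, Theorem~\ref{Dmodpdiv}, which I will treat as a black box. The first step is to record the basic properties of the functor $G \leadsto \M(G) = \Hom(G,\widehat{CW}_k)$ on the category of finite $k$-group schemes of $p$-power order: that $\M(G)$ has finite $W$-length, that $\M$ is additive, and --- the engine driving everything that follows --- that $\M$ is \emph{exact}. Exactness comes from the fact that $\widehat{CW}_k$ is an injective object in the ambient category of (pro-)finite commutative $k$-group schemes, so that applying $\Hom(-,\widehat{CW}_k)$ to a short exact sequence of finite $k$-group schemes of $p$-power order produces a short exact sequence of $D_k$-modules. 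This is the technical core, and I would simply cite \cite{Fontaine} for it.

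The key geometric input --- and what I expect to be the main obstacle --- is that every finite $k$-group scheme $G$ of $p$-power order fits into a short exact sequence of fppf sheaves
$$0 \longrightarrow G \longrightarrow H_1 \longrightarrow H_2 \longrightarrow 0$$
in which $H_1$ and $H_2$ are Barsotti-Tate groups, necessarily of the same height. To construct it, one embeds $G$ as a closed subgroup scheme of an abelian variety $A$ over $k$; since $p^n$ kills $G$ for some $n$, this realizes $G$ inside $H_1 := A[p^\infty]$, and one then sets $H_2 := H_1/G$ and checks, via the snake lemma applied to multiplication by $p^n$, that $p$ remains an epimorphism on $H_2$ and that $H_2[p]$ remains finite, so $H_2$ is again Barsotti-Tate. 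Dually, every finite $D_k$-module $M$ is the cokernel of an inclusion $N_2 \hookrightarrow N_1$ of finite free $W$-modules carrying compatible $D_k$-structures --- lift a generating set of $M$ and extend the semilinear actions of $F$ and $V$ --- and by Theorem~\ref{Dmodpdiv} this inclusion is $\M$ of an isogeny $H_1 \to H_2$ of Barsotti-Tate groups, whose kernel is a finite $k$-group scheme with Dieudonn\'{e} module $M$ by exactness. Feeding these presentations through the contravariant exact functor $\M$ and chasing the resulting commutative diagrams yields full faithfulness (a morphism $G \to G'$, respectively a morphism $\M(G') \to \M(G)$, is matched by lifting to a morphism of the ambient Barsotti-Tate resolutions and invoking full faithfulness in Theorem~\ref{Dmodpdiv}) and essential surjectivity, establishing the antiequivalence.

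For the order formula $\#G = p^{\ell_W(\M(G))}$, I would argue by d\'{e}vissage: both $G \mapsto \log_p \#G$ and $G \mapsto \ell_W(\M(G))$ are additive on short exact sequences (the first trivially, the second by exactness of $\M$), and since the antiequivalence matches Jordan-H\"{o}lder filtrations it suffices to check the identity on simple objects. Over a perfect field these are the twisted forms of $\ZZ/p\ZZ$ (\'{e}tale), the twisted forms of $\mu_p$ (multiplicative), and $\alpha_p$ (the unique simple local-local one, since $\Aut(\alpha_p) = \mathbb{G}_m$ and $H^1(k,\mathbb{G}_m) = 0$); base change to $\ol k$, which alters neither $\#G$ nor $\ell_W(\M(G))$ because Dieudonn\'{e} modules commute with perfect base change and $W$-length is unchanged under $W \to W(\ol k)$, then reduces matters to the evident cases $\ZZ/p\ZZ$, $\mu_p$, $\alpha_p$, each of order $p$ with one-dimensional Dieudonn\'{e} module. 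In summary, once the exactness of $\M$ and the existence of Barsotti-Tate resolutions are granted, the antiequivalence and the order formula are a matter of diagram chasing and d\'{e}vissage; the substantive work lies in those two inputs, for which \cite{Fontaine} is the reference.
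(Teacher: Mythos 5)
The paper gives no proof of this theorem; like Theorems~\ref{Dmodpdiv} and \ref{etale-connected}, it is stated as a known result with a citation to \cite{Fontaine}, where the antiequivalence for finite $k$-group schemes is in fact established \emph{directly} (via the covector functor, the \'{e}tale--connected splitting, and Cartier/formal-group theory for the connected part) and the Barsotti-Tate case is then obtained by passage to the limit. Your proposal reverses that dependence: you treat the Barsotti-Tate case as the black box and try to d\'{e}viss\'{e} the finite case from it by Barsotti-Tate resolutions. That is a legitimate alternative architecture and the d\'{e}vissage for the order formula at the end is clean, but as written there are two genuine gaps in the part you flag as the engine.

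First, the existence of a short exact sequence $0 \to G \to H_1 \to H_2 \to 0$ with $H_1, H_2$ Barsotti-Tate is made to rest on embedding $G$ into an abelian variety over $k$. That embedding theorem is true over a perfect field, but it is a substantial result in its own right (Oort/Raynaud circle of ideas) and is far from the ``evident'' geometric input the structure of your argument needs; invoking it without either a proof or a precise citation leaves the resolution step unsupported. Second, and more seriously, the dual construction for essential surjectivity does not go through as stated: lifting a generating set of $M$ to a finite free $W$-module $N_1$ and then ``extending the semilinear actions of $F$ and $V$'' does not automatically produce a $D_k$-module, because arbitrary $\sigma$- and $\sigma^{-1}$-semilinear lifts of $F$ and $V$ need not satisfy $FV = VF = p$ on $N_1$. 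One can force the relation when $F$ acts invertibly on $M$ (set $\tilde V := p\,\sigma^{-1}(\tilde F)^{-1}$), but in general $F$ is not injective on $M$ (e.g.\ $M = \M(\alpha_p)$), and then there is no canonical way to choose compatible lifts; you would have to split $M$ into its $F$-bijective and $F$-nilpotent pieces and argue separately, at which point you are essentially reproducing Fontaine's direct proof rather than deducing the finite case from the Barsotti-Tate one. Patching either gap requires real work; as it stands the proposal identifies the right skeleton but does not yet supply the load-bearing steps.
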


\begin{theorem}\label{ffgetaleconn}
Let $G$ be a finite $k$-group scheme of $p$-power order.  Then $G$ is \'{e}tale if and only if $F(\M(G)) = \M(G)$ and $G$ is connected if and only if the action of $F$ on $\M(G)$ is nilpotent.  Define $\M(G)^\ast = \Hom_W(\M(G),K/W)$ and let $F$ $($respectively $V$$)$ act on $\M(G)^\ast$ as $V^\ast$ $($respectively $F^\ast$$)$ where $V^\ast$ and $F^\ast$ denote the dual action of $V$ and $F$ on $\M(G)$.  This gives $\M(G)^\ast$ the structure of a $D_k$-module and, moreover, there is a natural isomorphism of $D_k$-modules $$\varphi_G: \M(G^\ast) \rightarrow \M(G)^\ast$$ where $G^\ast$ is the Cartier dual of $G$.
\end{theorem}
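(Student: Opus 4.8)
The plan is to prove this statement exactly as one proves its Barsotti-Tate counterpart, Theorem \ref{etale-connected}: translate the group-scheme notions into conditions on the operator $F$ through the antiequivalence of Theorem \ref{ffgsoverk}, and deduce the duality by d\'{e}vissage from the $p$-divisible case already in hand.

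For the \'{e}tale/connected dichotomy, I would first use that over the perfect field $k$ the scheme $G$ is the product of its maximal \'{e}tale quotient and its connected (infinitesimal) part (cf.\ Remark \ref{BTgpdecomp}), so that $G$ is \'{e}tale exactly when its connected part is trivial and $G$ is connected exactly when its \'{e}tale quotient is trivial; both conditions are detected by the relative Frobenius $F_{G/k}\colon G\to G^{(p)}$, which is an isomorphism iff $G$ is \'{e}tale and has a vanishing iterate iff $G$ is infinitesimal. Applying the exact additive contravariant functor $\M$ of Theorem \ref{ffgsoverk} carries $F_{G/k}$ to (the linearization of) the $\sigma$-semilinear operator $F$ on $\M(G)$, and $V_{G/k}$ to that of $V$, as one sees from the construction of $F$ and $V$ via the functorial $\widehat{CW}_k$-group and their compatibility with Frobenius twists. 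Hence $G$ is \'{e}tale iff $F$ is surjective, equivalently bijective, on the finite-length module $\M(G)$, and $G$ is connected iff $F$ is nilpotent on $\M(G)$ (in finite length ``topologically nilpotent'' collapses to ``nilpotent''). As a sanity check one may verify both equivalences directly on the generators $\ZZ/p^n$ and $\mu_{p^n}$ and propagate them through the decomposition.

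For the duality I would proceed in two steps. First I would check that $\M(G)^\ast=\Hom_W(\M(G),K/W)$, with $F$ acting as the $\sigma$-twist of $V^\ast$ and $V$ as the $\sigma^{-1}$-twist of $F^\ast$, is again a finite $D_k$-module: the twists are precisely what restores $\sigma$- and $\sigma^{-1}$-semilinearity, the relations $FV=VF=p$ pass to the dual after a short computation with those twists, and since $W$ is a complete discrete valuation ring with residue field $k$ and $K/W$ is the injective hull of $k$, Matlis duality $\Hom_W(-,K/W)$ is exact and length-preserving, so $\M(G)^\ast$ has the same $W$-length as $\M(G)$. Then I would construct $\varphi_G$ by d\'{e}vissage: embed $G$ as a closed subgroup of a Barsotti-Tate group $\Gamma$ over $k$ --- a standard fact, e.g.\ via an embedding of $G$ into an abelian variety over $k$ and then into its $p$-divisible group --- and set $\Gamma'=\Gamma/G$, again Barsotti-Tate. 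Applying the contravariant functor $\M$ to $0\to G\to\Gamma\to\Gamma'\to 0$ and to its dual sequence $0\to(\Gamma')^\ast\to\Gamma^\ast\to G^\ast\to 0$ (Serre duality of the Barsotti-Tate terms, Cartier duality of $G$), and applying the exact functor $\Hom_W(-,K/W)$ to the first outcome, yields short exact sequences of $D_k$-modules
$$0\to\M(G^\ast)\to\M(\Gamma^\ast)\to\M((\Gamma')^\ast)\to 0,\qquad 0\to\M(G)^\ast\to\M(\Gamma)^\ast\to\M(\Gamma')^\ast\to 0.$$
The natural isomorphisms $\varphi_\Gamma$ and $\varphi_{\Gamma'}$ from Theorem \ref{etale-connected} identify the right-hand two terms compatibly with the maps induced by $\Gamma\to\Gamma'$, hence restrict on kernels to an isomorphism $\varphi_G\colon\M(G^\ast)\xrightarrow{\sim}\M(G)^\ast$ of $D_k$-modules. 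Finally I would check independence of the chosen embedding and naturality in $G$ in the usual way, e.g.\ by comparing two embeddings through a common one, or by recognizing $\varphi_G$ as adjoint to a canonical perfect pairing $\M(G)\times\M(G^\ast)\to K/W$.

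The step I expect to be the main obstacle is not any single computation but the bookkeeping around the d\'{e}vissage: verifying that the $D_k$-structure it produces on $\M(G)^\ast$ coincides on the nose with the twisted-dual structure in the statement, and that $\M$ is genuinely exact on the mixed sequences above --- which forces one to invoke that $\M$ extends to an exact contravariant functor on the category built from $p$-divisible groups and finite group schemes of $p$-power order over $k$. A more self-contained alternative, avoiding the embedding into a Barsotti-Tate group, would be to imitate Fontaine's direct construction of the duality pairing from the description $\M(H)=\Hom(H,\widehat{CW}_k)$ together with the pairing underlying Cartier duality, as in \cite{Fontaine}; that route is heavier computationally, which is why I would write up the d\'{e}vissage argument instead.
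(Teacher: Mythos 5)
The paper does not actually prove this statement: Theorems \ref{ffgsoverk} and \ref{ffgetaleconn} are introduced with the sentence ``Fontaine \cite{Fontaine} then shows that $\M(G_k)$ satisfies analogues to Theorems \ref{Dmodpdiv} and \ref{etale-connected},'' and no proof environment follows. So you are supplying an argument where the paper supplies only a citation; the ``alternative route'' you mention at the end --- imitating Fontaine's direct construction from $\M(H)=\Hom(H,\widehat{CW}_k)$ --- is precisely what the paper delegates to \cite{Fontaine}.

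Your treatment of the \'{e}tale/connected dichotomy is correct and standard: over the perfect field $k$ the connected--\'{e}tale decomposition reduces the claim to the behavior of the relative Frobenius $F_{G/k}$, the antiequivalence $\M$ carries $F_{G/k}$ and $V_{G/k}$ to the semilinear operators $F$ and $V$, surjectivity of $F$ on a finite-length module forces bijectivity, and ``topologically nilpotent'' degenerates to ``nilpotent.'' No issues there.

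For the duality, d\'{e}vissage through an embedding $G\hookrightarrow\Gamma$ into a Barsotti--Tate group is the right idea, and your identification of $\Hom_W(-,K/W)$ with Matlis duality on finite-length $W$-modules is the correct mechanism. One genuine bookkeeping point does need repair, though, and it traces to the way the paper phrases Theorem \ref{etale-connected}. As written, Theorem \ref{etale-connected} declares $\M(\Gamma)^\ast=\Hom_W(\M(\Gamma),K/W)$ and asserts $\M(\Gamma^\ast)\cong\M(\Gamma)^\ast$ for a Barsotti--Tate group $\Gamma$. But for such $\Gamma$, $\M(\Gamma)$ is a free $W$-module of rank $\Ht(\Gamma)$, so $\Hom_W(\M(\Gamma),K/W)\cong(K/W)^{\Ht(\Gamma)}$ is a divisible torsion module, while $\M(\Gamma^\ast)$ is again free of the same finite rank; these cannot be isomorphic. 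The Barsotti--Tate duality must use $\Hom_W(\M(\Gamma),W)$ (free dual), not the Matlis dual, and that forces a small change in your two exact sequences. Concretely: applying $\Hom_W(-,W)$ to $0\to\M(\Gamma')\to\M(\Gamma)\to\M(G)\to 0$ gives, using $\Hom_W(\M(G),W)=0$ and $\Ext^1_W(\M(\Gamma),W)=0$,
$$0\to\Hom_W(\M(\Gamma),W)\to\Hom_W(\M(\Gamma'),W)\to\Ext^1_W(\M(G),W)\to 0,$$
and the long exact sequence for $0\to W\to K\to K/W\to 0$ identifies $\Ext^1_W(\M(G),W)$ with $\Hom_W(\M(G),K/W)=\M(G)^\ast$. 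Matching this against $0\to\M(G^\ast)\to\M(\Gamma^\ast)\to\M((\Gamma')^\ast)\to 0$ via the Barsotti--Tate duality on the two free terms then yields the desired $\varphi_G$. This is a fix, not a new idea --- your d\'{e}vissage is morally right, and you even flagged the bookkeeping as the likely trouble spot --- but as written the middle terms of your two displayed sequences are not isomorphic, so the identification you invoke does not literally apply. Independently, you may also want to cite a concrete reference (Oort, or Berthelot--Breen--Messing) for the embeddability of an arbitrary finite $p$-power-order $k$-group scheme into a Barsotti--Tate group, rather than leaving it at ``a standard fact.''
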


Returning to the case where $G$ is a finite flat $W$-group scheme, the extra data needed to classify these comes from a $W$-submodule $L(G)$ of ``logarithms'' of $\M(G_k)$.  More precisely, define the category of \emph{finite Honda systems} over $W$ to be the category whose objects are pairs $(L, M)$ where $M$ is a finite $D_k$-module and $L \subset M$ is a $W$-submodule such that
\begin{enumerate}
\item $V|_L : L \rightarrow M$ is injective,

\item the natural $W$-linear composition $$L/p \rightarrow M/p \rightarrow M/F(M)$$ is an isomorphism of $k$-vector spaces.  
\end{enumerate}
The morphisms $\varphi:(L,M)\rightarrow(L',M')$ are pairs $\varphi=(\varphi_L,\varphi_M)$ where $\varphi_M:M\rightarrow M'$ is a $D_k$-module homomorphism and $\varphi_L:L\rightarrow L'$ is a $W$-module homomorphism such that $\varphi_M|_L = \varphi_L$ as $W$-linear homomorphisms.

\begin{theorem}{\cite[Chapter IV, Proposition 5.1]{Fontaine}}
Let $G$ be a finite flat $W$-group scheme.  Assume $p \neq 2$.  Then $G\leadsto (L(G),\M(G_k))$ gives an antiequivalence of categories between finite flat $W$-group schemes and finite Honda systems over $W$.
\end{theorem}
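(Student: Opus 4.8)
The statement is Proposition 5.1 of Chapter IV of \cite{Fontaine}, and the natural route -- the one I would follow -- is to deduce it from the classification of $p$-divisible groups over $W$ by Honda systems, established earlier in the same chapter, which I take as known: to a $p$-divisible group $H$ over $W$ one attaches $(L_H,\M(H_k))$, where $\M(H_k)$ is the Dieudonn\'{e} module of the closed fibre (finite free over $W$, with $\sigma$-semilinear $F$, $V$, $FV=p$) and $L_H\subset \M(H_k)$ is the $W$-direct factor of logarithms cutting out the Hodge/differential data of $H$, and this is an antiequivalence onto the Honda systems $(L,M)$ with $M$ free over $W$, $L$ a direct factor and $V|_L$ injective. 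The bridge to the finite case is d\'{e}vissage: every finite flat $W$-group scheme $G$ (killed by some $p^n$) sits in a short exact sequence $0\to G\to H\xrightarrow{f} H'\to 0$ with $H$, $H'$ $p$-divisible over $W$ -- embed $G$ as a closed subgroup of a $p$-divisible group $H$ (for instance inside an abelian scheme over $W$) and take $H'=H/G$, which is again $p$-divisible.

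Given such a resolution, the first step is to define the functor. Applying the covector module functor of Theorem \ref{Dmodpdiv}, which is exact on the relevant category (this uses the vanishing of $\mathrm{Ext}^1(H'_k,\widehat{CW}_k)$), to the exact sequence of closed fibres $0\to G_k\to H_k\to H'_k\to 0$ gives $0\to \M(H'_k)\to \M(H_k)\to \M(G_k)\to 0$, so $\M(G_k)\cong \M(H_k)/\M(H'_k)$ is finite of $W$-length $\log_p\#G$; one sets $L(G)$ to be the image of $L_H$ in $\M(G_k)$, equivalently $L_H/L_{H'}$. The verification that $(L(G),\M(G_k))$ is a finite Honda system -- injectivity of $V|_{L(G)}$ and the isomorphism $L(G)/p\xrightarrow{\sim}\M(G_k)/F\M(G_k)$ -- then drops out of the corresponding properties of the $p$-divisible Honda systems $(L_H,\M(H_k))$ and $(L_{H'},\M(H'_k))$ by a snake-lemma chase, using that $L_H$, $L_{H'}$ are $W$-direct factors. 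One then checks independence of the resolution: any two are dominated by a third (using that $\Hom$-groups between $p$-divisible groups over $W$ are computed on generic fibres, together with full faithfulness in the $p$-divisible case), and the resulting map on finite Honda systems is forced to be the identity.

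For quasi-inverseness, essential surjectivity comes by reversing the construction: given a finite Honda system $(L,M)$, realize it as the cokernel of a morphism of $p$-divisible Honda systems over $W$ (lift a basis-adapted presentation of $(L,M)$), let $f\colon H_0\to H_1$ be the corresponding isogeny of $p$-divisible groups, and put $G=\Ker f$; the exactness computed above gives $(L(G),\M(G_k))\cong (L,M)$. Full faithfulness is the dual matter: a morphism $(L(G),\M(G_k))\to(L(G'),\M(G'_k))$ is lifted, using $p$-divisible resolutions of $G$ and $G'$ and full faithfulness in the $p$-divisible case, to a morphism of resolutions, and the five lemma in the exact category of Honda systems produces the unique morphism $G\to G'$ inducing it.

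I expect the real obstacles to be two. First, one must genuinely produce the $p$-divisible resolutions over $W$ and verify enough exactness of $\M(-)$ on them (the needed $\widehat{CW}_k$-cohomology vanishing), so that $\M$ transports short exact sequences of $p$-divisible groups to short exact sequences of Dieudonn\'{e} modules; this is where flatness of $G$ over $W$ and the comparison of generic and special fibres are used. Second, and more essentially, the hypothesis $p\neq 2$ is indispensable: it is precisely what makes the logarithm submodule $L(G)$ adequate data to rigidify the lift of $G_k$ from $k$ to $W$ (for $p=2$ the linear-algebra category must be enlarged and Fontaine's argument fails), and it is already built into the $W$-direct-factor property of $L$ and the $p$-divisible classification on which everything rests. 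Keeping careful track of where $p\neq 2$ enters is the subtle point.
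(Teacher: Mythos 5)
The paper does not contain a proof of this statement. It is quoted, with an explicit citation, as \cite[Chapter IV, Proposition 5.1]{Fontaine}, and is used as a black box thereafter (notably in Section 3.2 to introduce restricted Honda systems, and in the lifting lemmas of Section 4). There is therefore no paper proof to compare your proposal against.

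As a free-standing reconstruction of Fontaine's argument, your outline has the right shape: d\'{e}vissage of $G$ against a $p$-divisible resolution, exactness of the covector-module functor, defining $L(G)$ as the image of the ambient logarithm submodule, and the standard checks for independence of resolution and quasi-inverseness. But the two points you correctly flag as ``real obstacles'' --- producing the $p$-divisible resolutions over $W$ together with the requisite exactness of $\M(-)$, and pinning down exactly where $p\neq 2$ enters to make $L(G)$ a determining $W$-direct factor --- are precisely where the bulk of the proof lives, and carrying them out in full would amount to reproducing a chapter of Fontaine's monograph. For the purposes of the present paper the theorem is simply imported, not re-proved, so no further work is expected of the reader here.
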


\subsection{Restricted Honda systems over $W_n$}

More recently, Berbec \cite{Berbec} defined categories of finite Honda systems over $W_n$ and of restricted Honda systems over $W_n$.  He then showed how these categories can be used to classify various subcategories of finite flat group schemes over $W_n$.  We will define the category of restricted Honda systems over $W_n$ and then we will state a result about classifying Barsotti-Tate groups over $W_n$ that we use in Section 4.

\begin{definition}
A \emph{restricted Honda system} over $W_n$ is a pair $(L_n, M)$ where $M$ is a finite $D_k$-module and $(L_n, M/p^{n-1}M)$ is a finite Honda system over $W$.  Morphisms in this category are defined in the obvious manner.
\end{definition}

\begin{remark}
Note that restricted Honda systems over $W_n$ is an abelian category since the category of finite Honda systems over $W$ is abelian (see, for example,\cite{Conrad2} or \cite{Fontaine-Laffaille}).
\end{remark}

To a finite flat $W_n$-group scheme $G$, one can contravariantly associate a restricted Honda system $(L_n(G), \M(G_k))$.  (We refer the reader to \cite{Berbec} for the exact definition of $L_n(G)$.)

\begin{proposition}\label{BTWnclass}
Barsotti-Tate groups $G$ over $W_n$ are classified up to isomorphism by $\M(G_k)$ and $(L_n(G[p^{n-1}]),\M(G[p^{n-1}]_k))$.
\end{proposition}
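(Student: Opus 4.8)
The plan is to deduce Proposition~\ref{BTWnclass} from Berbec's classification of finite flat $W_n$-group schemes via restricted Honda systems, together with the standard observation that a Barsotti-Tate group over $W_n$ is determined by its truncated Barsotti-Tate subgroups. First I would recall that for a Barsotti-Tate group $G$ over $W_n$, each $G[p^m]$ is a finite flat $W_n$-group scheme, so Berbec's antiequivalence assigns to it the restricted Honda system $(L_n(G[p^m]), \M(G[p^m]_k))$. The key structural input is that $G$, as a $p$-divisible group over the Artinian base $W_n$, is recovered from the inductive system $\{G[p^m]\}$; in fact it is a theorem (going back to Grothendieck--Messing and Illusie in the Barsotti-Tate setting) that over a base on which $p$ is nilpotent, a Barsotti-Tate group of height $h$ is already determined by $G[p^{n-1}]$ once $p^{n-1}=0$ in the structure sheaf, because the higher $G[p^m]$ for $m\ge n-1$ are built from $G[p^{n-1}]$ by the defining exactness conditions. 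So the content to verify is that $(L_n(G[p^{n-1}]),\M(G[p^{n-1}]_k))$, supplemented by $\M(G_k)$ itself (the full Dieudonn\'e module of the special fiber, which records the height and the $F,V$-structure at the level of the Barsotti-Tate group rather than a single truncation), suffices to pin down $G$ up to isomorphism.

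The steps, in order, would be: (1) Fix $G$ and $G'$ Barsotti-Tate over $W_n$ and suppose there is an isomorphism $\M(G_k)\cong\M(G'_k)$ of $D_k$-modules carrying $(L_n(G[p^{n-1}]),\M(G[p^{n-1}]_k))$ to $(L_n(G'[p^{n-1}]),\M(G'[p^{n-1}]_k))$ compatibly. (2) Apply Berbec's antiequivalence between finite flat $W_n$-group schemes and restricted Honda systems over $W_n$ to upgrade the isomorphism of restricted Honda systems to an isomorphism $G[p^{n-1}]\xrightarrow{\sim} G'[p^{n-1}]$ of finite flat $W_n$-group schemes. (3) Use the fact that a Barsotti-Tate group over $W_n$ is the colimit $\varinjlim G[p^m]$ and that each $G[p^m]$ for $m\ge n-1$ is functorially reconstructed from $G[p^{n-1}]$ together with the special-fiber data $\M(G_k)$ — concretely, $G[p^m]$ fits in the exact sequence $0\to G[p^{n-1}]\to G[p^m]\xrightarrow{p^{n-1}} G[p^{m-n+1}]\to 0$, and by descending induction on the gap one reconstructs the whole tower — to promote the isomorphism in (2) to an isomorphism $G\xrightarrow{\sim} G'$ of Barsotti-Tate groups over $W_n$. (4) Conversely, an isomorphism $G\cong G'$ obviously induces matching data, so the assignment is a complete invariant. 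I would cite \cite{Berbec} for steps (2) and the precise shape of $L_n(G[p^{n-1}])$, and \cite{Fontaine} for the compatibility of $\M$ on truncations with $\M$ on the $p$-divisible group.

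The main obstacle I anticipate is step (3): making precise the sense in which the \emph{single} truncation $G[p^{n-1}]$, rather than the whole inductive system, determines the Barsotti-Tate group over $W_n$, and in particular why no information beyond $\M(G_k)$ and the level-$p^{n-1}$ restricted Honda system is needed. Over a field this is classical (a Barsotti-Tate group over $k$ is determined by its Dieudonn\'e module, which is the inverse limit of the finite Dieudonn\'e modules of the $G[p^m]$), but over $W_n$ one must check that the lifting data packaged in $L_n$ at level $p^{n-1}$ is exactly the right amount: too low a truncation would not see the lift, and Berbec's point is that $n-1$ is the threshold at which $M/p^{n-1}M$ already carries a finite Honda system over $W$, so that the restricted Honda system over $W_n$ of the truncation coincides with the relevant data for the Barsotti-Tate group. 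I would handle this by invoking Berbec's comparison results directly rather than reproving the lifting theory, and by noting that the exact sequences relating the $G[p^m]$ are strict exact sequences of finite flat group schemes, hence are reflected faithfully through the (exact) functor to restricted Honda systems. The remaining verifications — that morphisms of restricted Honda systems compatible with the special-fiber Dieudonn\'e module are the same thing as morphisms of Barsotti-Tate groups over $W_n$ — are then formal consequences of the antiequivalences already recorded in Theorems~\ref{Dmodpdiv}--\ref{ffgetaleconn} and their $W_n$-analogues in \cite{Berbec}.
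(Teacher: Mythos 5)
The paper's own proof of this proposition is simply a citation: it states that the result is ``a combination of Proposition 3.7, Proposition 3.9, Corollary 3.10 and Remark 3.11 of \cite{Berbec},'' with no argument supplied. So in the end your proposal and the paper land on the same source — you correctly identify Berbec's machinery as the crux and say you would invoke it directly — and to that extent the approach is the same.

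That said, the elaboration you sketch in step (3) has a genuine gap, and the premise you cite to motivate it is false in the setting at hand. You appeal to a theorem that a Barsotti-Tate group is ``determined by $G[p^{n-1}]$ once $p^{n-1}=0$ in the structure sheaf.'' But over $W_n = W/p^n$ one has $p^n = 0$ while $p^{n-1} \neq 0$, so the hypothesis $p^{n-1}=0$ is not satisfied; whatever truncation theorem of Illusie--Grothendieck--Messing type you have in mind would at best give control at level $p^n$, not $p^{n-1}$. Indeed, this mismatch is exactly why the proposition needs the second piece of data $\M(G_k)$ — the truncation $G[p^{n-1}]$ over $W_n$ alone does not determine $G$. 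Likewise, the descending-induction argument via the exact sequences $0\to G[p^{n-1}]\to G[p^m]\xrightarrow{p^{n-1}} G[p^{m-n+1}]\to 0$ does not reconstruct $G[p^m]$: knowing the outer terms of a short exact sequence leaves the extension class undetermined, and you have not explained why that class is fixed by $\M(G_k)$. This is precisely the nontrivial deformation-theoretic content packaged in Berbec's Propositions 3.7, 3.9, Corollary 3.10, and Remark 3.11; your proposal is honest in flagging step (3) as the obstacle and in saying you would fall back on Berbec there, but as written the surrounding justification does not close the gap and would need to be replaced by the actual statements from \cite{Berbec} rather than the truncation heuristic.
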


\begin{proof}
This is a combination of Proposition 3.7, Proposition 3.9, Corollary 3.10 and Remark 3.11 of \cite{Berbec}.
\end{proof}

\section{Algebraic Results}

\subsection{A criterion for local torsion} Assume $p > 2.$  Let $k$ be a finite extension of $\F_p$ of degree $d$.  Let $W$, $K$ and $W_2$ be defined as in Section 2.  Define the \emph{$p$-rank} of a finite abelian group $M$ to be the $\F_p$-dimension of $M\otimes_\ZZ \F_p$ (or, equivalently, the $\F_p$-dimension of the $p$-torsion subgroup $M[p]$).  Denote the $p$-rank of $M$ by $\rank_p M$. Our first result gives a condition for when an abelian variety over $K$ has a nontrivial $p$-torsion point defined over $K$.

\begin{lemma}\label{W2condition}
Let $A$ be a $g$-dimensional abelian variety over $K$ of good reduction.  Then 
$$\rank_pA(W_2) = gd \mbox{ if } A(K)[p] = 0$$ and $$gd+1 \leq \rank_pA(W_2) \leq g(d+1) \mbox{ if } A(K)[p]\neq 0.$$
\end{lemma}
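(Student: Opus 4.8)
The plan is to exploit the reduction exact sequence for an abelian variety of good reduction over the local field $K$ with ring of integers $W$. Let $\A/W$ be the Néron model of $A$, so that $\A(W) = A(K)$, and let $\widehat{\A}$ denote the formal group of $\A$ over $W$. Since $A$ has good reduction, reduction modulo the maximal ideal fits into a short exact sequence
\[
0 \longrightarrow \widehat{\A}(\m) \longrightarrow A(K) \longrightarrow \A_k(k) \longrightarrow 0,
\]
and there is a parallel statement over $W_2$: one has $0 \to \widehat{\A}(pW_2) \to A(W_2) \to \A_k(k) \to 0$, where $\widehat{\A}(pW_2)$ is the group of $pW_2$-valued points of the formal group, which is just $(pW_2)^g$ as a set because $pW_2 \cdot pW_2 = 0$ kills all higher-order terms in the formal group law. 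First I would pin down the $p$-rank of each of the three terms. The middle term is what we want; the kernel $\widehat{\A}(pW_2) \cong (pW_2)^g \cong k^g$ is an $\F_p$-vector space of dimension $gd$, so it contributes exactly $gd$ to $\rank_p$; the quotient $\A_k(k)$ is a finite group whose $p$-rank I will control using the structure of $A[p]$ over $k$.

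The key input on the special fiber is that $\rank_p \A_k(k) = \dim_{\F_p} A_k[p](k)$, and since $A_k[p]$ is a finite group scheme over $k$ of order $p^{2g}$ whose étale part has rank at most $g$ (the connected–étale sequence, together with the fact that the formal group $\widehat{\A}_k$ has height $\geq g$, forces the étale quotient of $A_k[p]$ to have order dividing $p^g$), we get $0 \leq \rank_p \A_k(k) \leq g$. Now I apply the snake/rank inequalities to the exact sequence over $W_2$: from additivity of $\F_p$-dimension along $0 \to k^g \to A(W_2)[p^\infty]\otimes\F_p \to \A_k(k)\otimes \F_p$ — being careful that $\otimes \F_p$ is only right exact, so one actually gets $gd \leq \rank_p A(W_2) \leq gd + \rank_p \A_k(k) \leq gd + g = g(d+1)$. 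This already gives the upper bound in both cases and the lower bound $gd$ always. It remains to show (i) if $A(K)[p] = 0$ then $\rank_p A(W_2) = gd$ exactly, and (ii) if $A(K)[p] \neq 0$ then $\rank_p A(W_2) \geq gd+1$.

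For (i): if $A(K)[p] = 0$, then reducing the exact sequence over $W$ shows $\widehat{\A}(\m)[p] \to A(K)[p] = 0$, but also any $p$-torsion in $\A_k(k)$ would lift — here is the crucial point — the map $A(K)[p^\infty] \to \A_k(k)[p^\infty]$ on torsion: since $\widehat{\A}(\m)$ is a pro-$p$ group that is torsion-free for $p > 2$ (as $\widehat{\A}(\m) \cong (\m)^g$ via the formal logarithm when the absolute ramification is small, or more robustly because $\widehat{\A}(\m)$ has no $p$-torsion once $e < p-1$; and $K/\QQ_p$ is unramified here so $e = 1 < p-1$), the map $A(K)[p] \to \A_k(k)[p]$ is injective, hence $\A_k(k)[p] = 0$, hence $\rank_p \A_k(k) = 0$, and the $W_2$-sequence collapses to give $\rank_p A(W_2) = gd$. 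For (ii): if $A(K)[p] \neq 0$, pick a nonzero $P \in A(K)[p]$; then $P$ lands in $A(W_2)$ (torsion points of the Néron model are integral), and it is nonzero in $A(W_2)[p]$; I must check it is $\F_p$-independent from the image of $k^g = \widehat{\A}(pW_2)$, i.e. that $P$ does not reduce to $0$ in $\A_k(k)$ — equivalently $P \notin \widehat{\A}(pW_2)$. But an element of $\widehat{\A}(pW_2)$ is infinitely $p$-divisible in $\widehat{\A}(\m)$ only trivially; more directly, if $P$ reduced to $0$ in $\A_k(k)$ then $P \in \widehat{\A}(\m)[p] = 0$, contradiction. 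Hence $\rank_p A(W_2) \geq \dim_{\F_p}(k^g \oplus \F_p P) = gd + 1$, completing the proof.

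The main obstacle I anticipate is the careful bookkeeping of exactness: $M \mapsto M\otimes_\ZZ \F_p = M/pM$ is only right exact, so translating the exact sequences of abelian groups into the claimed inequalities on $p$-ranks requires either invoking $\mathrm{Tor}_1$ terms or, more cleanly, working with the $p$-torsion subgroups $M[p]$ (which behaves left-exactly) and comparing $\rank_p M = \dim_{\F_p} M[p]$ for finite $M$ — and noting $\widehat{\A}(pW_2)$ and $\A_k(k)$ are finite while $A(W_2)$ has finite torsion, so these identifications are legitimate. The secondary subtlety is justifying that the étale quotient of $A_k[p]$ has $\F_p$-dimension at most $g$, which is exactly where the Dieudonné-theoretic input — $F$ bijective on the étale part, and the rank-$2g$ Dieudonné module of $A_k[p^\infty]$ with $FV = p$ forcing the $V$-nilpotent part to have rank $\geq g$ — enters via Theorems \ref{Dmodpdiv} and \ref{etale-connected}.
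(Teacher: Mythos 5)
Your setup is the same as the paper's: the reduction exact sequence for $A$ over $W$ and its analogue over $W_2$, with the formal group contributing $(\ZZ/p)^{gd}$. Your upper bound $\rank_p A(W_2)\le g(d+1)$ and your part (ii) argument (a nonzero $P\in A(K)[p]$ maps to a nonzero element of $A(W_2)[p]$ outside the formal-group piece, because $\widehat{\A}(pW)$ is $p$-torsion-free) are both correct.

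Part (i), however, contains a genuine logical gap. You write that the map $A(K)[p]\to \A_k(k)[p]$ is injective, ``hence $\A_k(k)[p]=0$.'' Injectivity of a map with trivial source says nothing about the target, and indeed the implication $A(K)[p]=0\Rightarrow A(k)[p]=0$ is false: take an ordinary elliptic curve $E/\QQ_p$ with $p\mid\#E(\F_p)$, so $E(\F_p)[p]\cong\ZZ/p$; the connecting homomorphism $E(\F_p)[p]\to\widehat E(p\ZZ_p)/p\widehat E(p\ZZ_p)\cong\ZZ/p$ from the $[p]$-snake sequence can be an isomorphism, in which case $E(\QQ_p)[p]=0$ while $E(\F_p)[p]\neq 0$. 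Your parenthetical that ``any $p$-torsion in $\A_k(k)$ would lift'' is exactly the false claim being used; torsion in the special fiber lifts to $A(K)$ but not in general to $A(K)[p]$, precisely because the formal group is not $p$-divisible. So in the case $A(K)[p]=0$ you have only shown $gd\le\rank_p A(W_2)\le gd+\rank_p A(k)$, not equality.

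The paper closes this gap by comparing the two snake-lemma sequences (over $W$ and over $W_2$) simultaneously. The connecting maps $A(k)[p]\to(\ZZ/p)^{gd}$ coincide because the right-hand vertical arrows of the commutative ladder are isomorphisms, and the top row identifies the kernel of this connecting map with $A(K)[p]$. Plugging this into the bottom row gives the clean identity $\rank_p A(W_2)=gd+\dim_{\F_p}A(K)[p]$, from which both cases of the lemma follow at once. To repair your argument along your own lines, you would need to show instead that the image of $A(W_2)[p]$ in $\A_k(k)[p]$ is contained in the image of $A(K)[p]$: given $Q\in A(W_2)[p]$, lift it to $P\in A(K)$; then $pP$ lies in $\ker(A(K)\to A(W_2))=\widehat{\A}(p^2W)=p\,\widehat{\A}(pW)$, so $pP=pR$ with $R$ in the formal group, and $P-R\in A(K)[p]$ has the same image in $\A_k(k)$ as $Q$. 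This is the missing step; what you wrote does not supply it.
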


\begin{proof}
Consider the commutative diagram with exact rows:
$$\begin{CD}
0 @>>> \widehat{A}(pW) @>>> A(K) @>>> A(k)@>>> 0\\
@.              @VVV            @VVV           @|   \\
0 @>>> \widehat{A}(pW/p^2W) @>>> A(W_2) @>>> A(k) @>>> 0
\end{CD}$$
where $\widehat{A}$ is the formal group of $A$ over $W$.  Since $p>2$, $\widehat{A}(pW) \cong (p\ZZ_p)^{gd}$ and the morphism $\widehat{A}(pW) \rightarrow \widehat{A}(pW/p^2W)$ can be identified with the natural reduction morphism $\ZZ_p^{gd} \rightarrow (\ZZ/p)^{gd}$.  Therefore, taking $p$-torsion and applying the snake lemma gives the following commutative diagram with exact rows:
$$
\begin{CD}
0 @>>> 0 @>>> A(K)[p] @>>> A(k)[p]@>>> (\ZZ/p)^{gd}\\
@.              @VVV            @VVV           @|              @|\\
0 @>>> (\ZZ/p)^{gd} @>>> A(W_2)[p] @>>> A(k)[p] @>>> (\ZZ/p)^{gd}.
\end{CD}
$$
So $\rank_pA(W_2) = gd + \rank_pA(K)[p]$.  The lemma then follows from the structure of $A(K)[p]$ and the fact that $A(K)[p]$ injects into $A(k)[p]$.
\end{proof}

\begin{remark}
The notion of $p$-rank that is usually used when studying abelian varieties $A$ is equivalent (in our notation) to $\dim_{\F_p}A(\ol{k})[p]$.
\end{remark}

\subsection{Lifts of abelian surfaces to $W_2$ with elevated $p$-rank}



Recall Remark \ref{BTgpdecomp}, which states that for a group scheme $G$ over $k$, there is a canonical splitting $$G \cong G_{\et} \times G_{\mult} \times G_{\mbox{ll}},$$ where $G_{\et}$ is the maximal \'{e}tale quotient of $G$, $G_{\mult}$ is the maximal multiplicative subgroup of $G$ and $G_{\mbox{ll}}$ is a group schem with no non-trivial \'{e}tale quotient nor non-trivial multiplicative subgroup.  This fact about the splitting of group schemes over $k$ will be used in the proofs of the next three results.

\begin{proposition}\label{liftsofabsur}
Let $A$ be an ordinary abelian surface over $k$.  Assume that $A(k)[p] \neq 0$.  Then at most $p^{3d} + p^{2d} - p^d$ of the $p^{4d}$ isomorphism classes of lifts of $A$ to an abelian surface $A'$ over $W_2$ satisfy $$\rank_pA'(W_2) \geq 2d+1.$$
\end{proposition}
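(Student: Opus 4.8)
The plan is to push everything through the Dieudonn\'{e}/Honda dictionary of Section~3 and reduce the statement to a count of singular $2\times 2$ matrices over $k$. Since $A$ is ordinary, Remark~\ref{BTgpdecomp} and Theorem~\ref{ffgetaleconn} give a splitting $M:=\M(A[p])=M_{\et}\oplus M_{\mult}$ in which $F$ is bijective and $V=0$ on $M_{\et}$, while $F=0$ and $V$ is bijective on $M_{\mult}$, each summand being a $k$-vector space of dimension $g=2$. By the Serre--Tate theorem the lifts of $A$ to $W_2$ are the lifts of $A[p^\infty]$ to $W_2$, and by Proposition~\ref{BTWnclass} these are classified by the finite Honda systems $(L,M)$ with this fixed $M$ (note $M$ is killed by $p$, so $M/pM=M$). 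Unwinding the defining conditions: the isomorphism $L/p\xrightarrow{\ \sim\ }M/F(M)$ forces $L$ to map isomorphically onto $M/F(M)=M_{\mult}$, after which injectivity of $V|_L$ holds automatically since $V$ is already bijective on $M_{\mult}$; hence $L$ is the graph $L_\psi:=\{(\psi(v),v):v\in M_{\mult}\}$ of a unique $k$-linear map $\psi\colon M_{\mult}\to M_{\et}$. So the $p^{4d}$ lifts are indexed by $\Hom_k(M_{\mult},M_{\et})$, which, after a choice of bases, is the space $M_2(k)$ of $2\times 2$ matrices over $k$ (this is where $g=2$ enters the count); write $A'_\psi$ for the lift attached to $\psi$.

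Next I would rewrite the $p$-rank condition. Re-running the snake-lemma argument of Lemma~\ref{W2condition} for the reduction exact sequence of $A'_\psi$ over $W_2$, one finds that $\rank_p A'_\psi(W_2)$ equals $2d$ plus the $\F_p$-dimension of the image of the reduction map $A'_\psi(W_2)[p]\to A(k)[p]$; hence $\rank_p A'_\psi(W_2)\geq 2d+1$ exactly when some nonzero $\bar x\in A(k)[p]$ lifts to a $p$-torsion point of $A'_\psi(W_2)$. Since $A$ is ordinary, $A(k)[p]=A[p]_{\et}(k)$, so such an $\bar x$ cuts out a constant closed subgroup scheme $\ZZ/p\hookrightarrow A[p]_{\et}$, and lifting $\bar x$ amounts to extending this to a closed immersion $(\ZZ/p)_{W_2}\hookrightarrow A'_\psi[p]$ over $W_2$. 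Dualizing through the antiequivalence, this asks for a surjection of finite Honda systems $(L_\psi,M)\twoheadrightarrow(0,\M(\ZZ/p))$ lifting the surjection $M\twoheadrightarrow\M(\ZZ/p)$ attached to $\bar x$. Because $M_{\mult}$ must map to $0$ in any \'{e}tale quotient, that surjection factors as $M\to M_{\et}\twoheadrightarrow\M(\ZZ/p)$, and chasing the compatibility $\varphi_M|_{L_\psi}=\varphi_L=0$ from the definition of a morphism of finite Honda systems shows the required surjection exists precisely when $L_\psi\subseteq H_{\bar x}\oplus M_{\mult}$, i.e.\ precisely when $\image\psi\subseteq H_{\bar x}$, where $H_{\bar x}:=\ker(M_{\et}\twoheadrightarrow\M(\ZZ/p))$ is a $k$-subspace of $M_{\et}$ of dimension $g-1=1$.

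Combining the two reductions, $\rank_p A'_\psi(W_2)\geq 2d+1$ forces $\image\psi$ to lie in a $1$-dimensional subspace of $M_{\et}$, i.e.\ forces $\psi\colon M_{\mult}\to M_{\et}$ not to be an isomorphism. The number of non-invertible elements of $M_2(k)$ is
$$|M_2(k)|-|\GL_2(k)|=p^{4d}-(p^{2d}-1)(p^{2d}-p^d)=p^{3d}+p^{2d}-p^d,$$
which is exactly the claimed bound. (Equivalently, the ``bad'' $\psi$ lie in the union of the $p^d+1$ subspaces $\Hom_k(M_{\mult},H)$ taken over the lines $H\subseteq M_{\et}$; any two of these meet only at $\psi=0$, so the union has $1+(p^d+1)(p^{2d}-1)=p^{3d}+p^{2d}-p^d$ elements.)

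The step I expect to be the main obstacle is the translation carried out in the second paragraph: one must handle with care the $W$ versus $W_2$ bookkeeping built into Berbec's restricted Honda systems, and verify that the morphism-compatibility condition in the category of finite Honda systems really does cut out the condition $\image\psi\subseteq H_{\bar x}$ and nothing coarser. One also needs the Serre--Tate/Honda-system dictionary to be a genuine bijection on isomorphism classes, so that all $p^{4d}$ maps $\psi$ genuinely occur, and as pairwise non-isomorphic lifts. By contrast, the snake lemma in the second paragraph and the matrix count in the third are routine.
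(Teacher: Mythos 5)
Your proof is correct and follows the same overall strategy as the paper: Serre--Tate reduces the problem to lifting $A[p^\infty]$, Berbec's Proposition~\ref{BTWnclass} converts that to restricted Honda systems $(L_2,M)$ with $M=\M(A[p])$ fixed, and the ordinarity decomposition $M=M_{\et}\oplus M_{\mult}$ together with the two axioms on $L_2$ pins down the parameter space. What you do differently, and more cleanly, is to recognize $L_2$ intrinsically as the graph of a $k$-linear map $\psi\colon M_{\mult}\to M_{\et}$, so that the $p^{4d}$ lifts are literally $\Hom_k(M_{\mult},M_{\et})\cong M_2(k)$, whereas the paper writes out $L_2=k(\alpha_1e_1+\alpha_2e_2+e_3)\oplus k(\beta_1e_1+\beta_2e_2+e_4)$ in coordinates. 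You then convert the elevated-$p$-rank condition into the existence of a Honda-system surjection onto $(0,\M(\ZZ/p))$, which forces $L_\psi\subset\ker$ and hence $\image\psi$ into a line; the paper instead hunts for an explicit product decomposition $G\cong\ZZ/p\times G'$ and enumerates the admissible lines $k(e_1+\gamma e_2)$, $ke_2$. The two counts coincide, as your parenthetical ``union of $p^d+1$ hyperplanes $\Hom_k(M_{\mult},H)$'' recovers the paper's $(p^d+1)(p^{2d}-1)+1$. One remark worth keeping in mind: both arguments are over-counts in the same harmless way --- only the $F$-stable lines $H$ (of which there are at most $p+1$, not $p^d+1$ when $d>1$) actually arise as kernels of Dieudonn\'e-module surjections onto $\M(\ZZ/p)$, so ``$\psi$ singular'' is necessary but generally not sufficient for elevated $p$-rank --- but since the statement only asserts an upper bound (``at most''), the implication you prove in that direction is exactly what is needed, and the identification of the bad locus with the singular matrices in $M_2(k)$ is an elegant way to see the number $p^{3d}+p^{2d}-p^d$.
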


\begin{proof}

Serre-Tate lifting \cite{Katz} tells us that lifts of $A$ to $W_2$ are parameterized by lifts of $A[p^\infty]$  to a Barsotti-Tate group over $W_2$.  By Proposition \ref{BTWnclass}, classifying lifts of $A[p^\infty]$ to $W_2$ is equivalent to determining all restricted Honda systems $(L_2,M)$ over $W_2$ where $M = \M(A[p])$.

Since $A$ is ordinary, $$A[p] \cong A[p]_{\mbox{\'{e}t}} \times A[p]_{\mult}$$ where $A[p]_{\mbox{\'{e}t}} = (\ZZ/p\ZZ)^2(\rho)$ and $A[p]_{\mult} = (\ZZ/p\ZZ)^2(\rho)^\vee$.  We use the notation $(\ZZ/p\ZZ)^2(\rho)$ to denote the $k$-group scheme that is constant after a finite base change $k'/k$ and whose $\ol{k}$-points admit a Galois action of $\Gal(\ol{k}/k)$ given by the representation $\rho$.  In particular, since $A(k)[p]\neq 0$, we can assume that $\rho$ has the form $$\begin{pmatrix} 1 & \mu\\0 &\chi\end{pmatrix}.$$  Also note that after base changing to a finite extension $k'/k$, the Cartier dual of $(\ZZ/p\ZZ)^2(\rho)$ becomes $\mu_p^2$.  Thus the Dieudonn\'{e} module $\M(A[p])$ splits as \begin{equation}\label{Msplitting}\M((\ZZ/p\ZZ)^2(\rho))\oplus\M((\ZZ/p\ZZ)^2(\rho)^\vee).\end{equation}


By Theorem \ref{ffgsoverk} and equation (\ref{Msplitting}), we know that $M$ is a $k$-vector space of dimension four.  Note that on the \'{e}tale component of $M$, the operator $F$ acts as $\rho(\Frob_k)$.  Pick a basis $e_1, e_2, e_3,$ and $e_4$ so that $e_1, e_2$ form a basis of the \'{e}tale component of $M$ and $e_3, e_4$ form a basis of the connected component as in (\ref{Msplitting}).  Since $pM = 0$, we have that the conditions on $L_2 \subset M/pM = M$ become
\begin{itemize}
\item $V|_{L_2}: L_2 \hookrightarrow M$,

\item $L_2 = L_2/p \rightarrow M/F(M)$ is a $k$-linear isomorphism.
\end{itemize}
Thus, as $F(M) = ke_1\oplus ke_2$ by Theorem \ref{ffgetaleconn}, \begin{equation}\label{L2}L_2 = k(\alpha_1e_1 + \alpha_2e_2 + e_3) \oplus k(\beta_1e_1 + \beta_2e_2 + e_4)\end{equation} for some $\alpha_i, \beta_i \in k$.  Thus there are a total of $p^{4d}$ isomorphism classes of lifts of $A$ to $W_2$.

Now we determine those lifts that satisfy the elevated $p$-rank condition of Lemma \ref{W2condition}.   Let $G$ be a lift of $A[p]$ to $W_2$ (i.e., $G$ is the $p$-torsion subgroup of an abelian surface over $W_2$).  Note that $G$ is an extension of $(\ZZ/p\ZZ)^2(\rho)$ by $(\ZZ/p\ZZ)^2(\rho)^\vee$.  Since we are only interested in an upper bound on the number of isomorphism classes of $G$ satisfying the elevated $p$-rank condition, it suffices to assume that $\rho = 1$ as this maximizes the number of copies of $\ZZ/p\ZZ$ inside of $(\ZZ/p\ZZ)^2(\rho)$ which consequently maximizes the opportunities for $G$ to have elevated $p$-rank.  Then we can check the condition of Lemma \ref{W2condition} by determining if $G \cong \ZZ/p\ZZ \times G'$ where $G'$ is a lift of $\ZZ/p\ZZ\times\mu_p^2$.  In terms of restricted Honda systems over $W_2$, this means that after picking one of the $p^d + 1$ copies of $\ZZ/p\ZZ$ inside of $A[p] = (\ZZ/p\ZZ)^2\times \mu_p^2$ with Dieudonn\'{e} modules corresponding to the lines $k(e_1 + \gamma e_2)$ for $\gamma \in k$ and $ke_2$ in $M$, we want $$(L_2, M) \cong (0,k) \oplus (L_2',M') = (0\oplus L_2', k\oplus M')$$ where $(L_2',M')$ corresponds to $G'$ and the last equality comes from the definition of products in the category of restricted Honda systems over $W_n$.  For concreteness, choose the line $ke_1$.  Since $M$ and $M'$ only depend on the special fiber of $G$ and $G'$, it follows from (\ref{Msplitting}) that $M' = ke_2\oplus ke_3\oplus ke_4$.  Moreover, $L_2'$ must then be a two-dimensional subspace of the form $k(\alpha e_2 + e_3)\oplus k(\beta e_2 + e_4)$ for some $\alpha, \beta \in k$.  Thus (in the notation of (\ref{L2})) we see that the only valid $L_2$ are those where $\alpha_1 = \beta_1=0$.  That is, excluding the case where $\alpha_1=\beta_1=\alpha_2 = \beta_2 = 0$, this gives $p^{2d}-1$ isomorphism classes of lifts to $W_2$ that satisfy the elevated $p$-rank condition.  Similarly, for each line listed above, an entirely analogous calculation also yields $p^{2d} -1$ isomorphism classes of lifts of $A[p]$ to $W_2$ with elevated $p$-rank (again excluding the case where $G$ splits which is common to them all).  Thus there are $$(p^d +1)(p^{2d} -1) + 1 = p^{3d} + p^{2d} - p^d$$ isomorphism classes of lifts with elevated $p$-rank.
\end{proof}

Let $D>0$ be a square-free integer, let $\OO$ be the ring of integers in $\QQ(\sqrt{D})$ and assume $p$ does not divide $\disc(\QQ(\sqrt{D}))$.  Recall that we say that an abelian surface $A$ over $k$ has real multiplication by $\QQ(\sqrt{D})$ if $\OO \hookrightarrow \End_k(A)$.  

\begin{remark}
In terms of the results of this section, there is nothing special about the choice of $\QQ(\sqrt{D})$.  As indicated by Lemma \ref{W2condition} and Proposition \ref{liftsofabsur}, these techniques are completely general and can be used for abelian surfaces with other types of endomorphism rings.  In fact, should one desire to spend the time working out the linear algebra, it should be relatively easy to obtain the analogous results for higher dimensional abelian varieties.  
\end{remark}

\begin{lemma}\label{liftsofabsurrm}
Let $A$ be an ordinary abelian surface over $k$ with real multiplication by $\QQ(\sqrt{D})$.  Suppose that $A(k)[p] \neq 0$.  If $\OO\otimes W \cong W \times W$ then at most $2p^{d} - 1$ of the $p^{2d}$ isomorphism classes of lifts of $A$ to an abelian surface $A'$ over $W_2$ satisfy $$\rank_pA'(W_2) \geq 2d+1.$$  Otherwise, $\OO\otimes W\cong W[\sqrt{D}]$ and there is a unique isomorphism class lifting $A$ to an abelian surface $A'$ over $W_2$ satisfying $$\rank_pA'(W_2) \geq 2d+1.$$
\end{lemma}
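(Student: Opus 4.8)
The plan is to follow the proof of Proposition \ref{liftsofabsur} while carrying the $\OO$-action along at every step. By Serre--Tate deformation theory \cite{Katz}, which is compatible with endomorphisms, lifting $A$ to an abelian surface with real multiplication by $\QQ(\sqrt D)$ over $W_2$ is the same as lifting $A[p^\infty]$ to a Barsotti--Tate group over $W_2$ with an $\OO\otimes W$-action, and by Proposition \ref{BTWnclass} these lifts are classified by the restricted Honda systems $(L_2,M)$ over $W_2$ with $M=\M(A[p])$ in which $L_2$ is an $\OO\otimes W$-submodule (equivalently an $\OO/p$-submodule) of $M$. Since $\M$ takes values in $W$-modules, any product decomposition of $\OO\otimes W$ passes to $M$; and since $F$ commutes with the $\OO$-action, $F(M)$ --- which for the ordinary surface $A$ is precisely the \'etale part of $M$ --- is an $\OO\otimes W$-stable $W$-submodule. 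Finally, by Lemma \ref{W2condition} the elevated $p$-rank condition $\rank_p A'(W_2)\geq 2d+1$ is equivalent to $A'(K)[p]\neq 0$, and, exactly as in Proposition \ref{liftsofabsur}, this will be detected on the Honda side by asking whether the appropriate piece of $(L_2,M)$ splits off a constant \'etale summand.

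Suppose first that $\OO\otimes W\cong W\times W$. The two idempotents split $M=M_1\oplus M_2$ as $W$-modules, and ordinariness of $A$ (via Theorems \ref{ffgsoverk} and \ref{ffgetaleconn}) makes each $M_i$ a two-dimensional $k$-vector space with a one-dimensional \'etale part $M_i\cap F(M)$ and a one-dimensional connected part. Every admissible $L_2$ then splits as $L_2^{(1)}\oplus L_2^{(2)}$ with $L_2^{(i)}$ a $k$-line of $M_i$ projecting isomorphically onto $M_i/(M_i\cap F(M))$; choosing bases $f_i$ of the \'etale part and $c_i$ of the connected part of $M_i$, we get $L_2^{(i)}=k(\alpha_i f_i+c_i)$ for a unique $\alpha_i\in k$, so there are exactly $p^{2d}$ lifts, parameterized by $(\alpha_1,\alpha_2)\in k^2$. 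I would then argue that any lift with elevated $p$-rank has $\alpha_1=0$ or $\alpha_2=0$: a nonzero point of $A'(K)[p]$ spans a constant $\ZZ/p$ in the generic fibre of $A'[p]$, and since $W$ is unramified and $p>2$, Raynaud's theorem on finite flat group schemes of type $(p,\dots,p)$ --- the mechanism already used in Lemma \ref{Raynaudram} --- shows the scheme-theoretic closure over $W$ is again the constant group scheme; its special fibre lies in the \'etale part of $A[p]$, and combined with the $\OO$-eigenspace decomposition this forces one of the two summands of $A'[p^\infty]$ to be split, i.e.\ the corresponding $\alpha_i$ to vanish. Hence the lifts with elevated $p$-rank lie in $\{\alpha_1=0\}\cup\{\alpha_2=0\}$, which has $p^d+p^d-1=2p^d-1$ elements.

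If instead $\OO\otimes W\cong W[\sqrt D]$, then $\OO\otimes W$ is an unramified extension of $W$, itself a complete discrete valuation ring, with residue field $k'$ a quadratic extension of $k$. Now $M$ is free of rank $2$ over $k'$, its \'etale part $F(M)$ is one-dimensional over $k'$, and an admissible $L_2$ is a $k'$-line projecting isomorphically onto $M/F(M)$, hence the graph of a $k'$-linear map $\phi\colon M/F(M)\to F(M)$; as $\Hom_{k'}(M/F(M),F(M))\cong k'$ has $k$-dimension $2$ there are again $p^{2d}$ lifts. Here $A'(K)[p]$ is an $\OO/p\cong\F_{p^2}$-module, so if it is nonzero it contains the constant $\F_{p^2}$; by the same closure argument that $\F_{p^2}$ is the entire \'etale part of $A'[p]$, which forces $A'[p^\infty]$ to be the Serre--Tate canonical (split) lift, i.e.\ $\phi=0$. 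Conversely, $A(k)[p]\neq 0$ forces the \'etale character of $A[p]$ to be trivial, so the canonical lift really does have a $K$-rational $p$-torsion point; thus exactly one lift has elevated $p$-rank.

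The step I expect to be the main obstacle is the equivalence "$A'$ has elevated $p$-rank $\iff$ the relevant restricted Honda system splits off its \'etale part" --- in particular the implication that a $K$-rational $p$-torsion point spreads out to an honest constant subgroup scheme over the unramified base $W$, which is exactly where Raynaud's theorem (as in Lemma \ref{Raynaudram}) is needed. The rest is the semilinear bookkeeping over $W_2$ already carried out in Proposition \ref{liftsofabsur}.
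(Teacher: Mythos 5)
Your proof is correct and follows the paper's overall strategy: Serre--Tate lifting (compatible with the $\OO$-action) plus Berbec's classification of Barsotti--Tate groups over $W_2$ by restricted Honda systems, and then a count of the $(L_2,M)$ satisfying the elevated $p$-rank constraint. The one step you handle differently, and more carefully, is the bridge between ``$\rank_p A'(W_2)\geq 2d+1$'' and ``the Honda system splits off an \'etale constant summand.'' The paper's Proposition \ref{liftsofabsur} simply asserts that one checks the Lemma \ref{W2condition} condition by determining if $G\cong \ZZ/p\ZZ\times G'$, and Lemma \ref{liftsofabsurrm} inherits this; you instead spell out the mechanism: a $K$-rational $p$-torsion point of a $W$-lift spans a constant $\ZZ/p\ZZ$ on the generic fibre, its scheme-theoretic closure over the unramified $W$ is again constant by Raynaud's theorem (the same device as in Lemma \ref{Raynaudram}), and the constant subgroup then splits the connected--\'etale sequence in each $\OO$-eigenpiece. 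That is a genuine clarification, since it is exactly where unramifiedness of $W$ is used. The other difference is cosmetic: in the inert case you work $\OO\otimes W$-linearly, treating $M$ as a rank-$2$ module over the residue field $k'$ of $\OO\otimes W$ so that admissible $L_2$'s are graphs of $k'$-linear maps, whereas the paper fixes a $k$-basis and writes $\sqrt D$ in $4\times 4$ rational canonical form; the two viewpoints give the same $p^{2d}$ total and the same unique lift with elevated $p$-rank. Your counts, $2p^d-1$ in the split case and $1$ in the inert case, match the paper.
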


\begin{proof}
The argument in this case follows the same logic as in Proposition \ref{liftsofabsur} except that now we must include a condition on $(L_2,M)$ that takes the real multiplication into consideration.  More precisely, we require $L_2$ and $M$ to also have $\OO\otimes W$-module structures.  Note that $$\OO\otimes W \cong \left\{\begin{array}{ll} W\times W, & \mbox{if $p$ splits in $\OO$ or if $[k:\F_p] \equiv 0 \bmod 2$},\\ W[\sqrt{D}], & \mbox{otherwise.}\end{array}\right.$$  We will treat these two possibilities separately.

Since $A$ is ordinary and $A(k)[p] \neq 0$, we see that $$A[p] = (\ZZ/p\ZZ)^2(\rho) \times (\ZZ/p\ZZ)^2(\rho)^\vee$$ over $k$ as in the previous proposition.  In particular, note that as $(\ZZ/p\ZZ)^2(\rho)^\vee$ is connected while $(\ZZ/p\ZZ)^2(\rho)$ is \'{e}tale, each of these direct factors of $A[p]$ must be stable under the action of $\OO$.  Returning to the Dieudonn\'{e} module side of the picture, this means that the action of $\alpha \in \OO\otimes W$ on $M$ has the form $$\begin{pmatrix}B_\alpha&0\\0&C_\alpha\end{pmatrix}$$ where $B_\alpha$ and $C_\alpha$ are $2\times 2$ matrices with coefficients in $k$ (since as a $W$-module $M$ has length one; that is, it is a $k$-vector space).

If $\OO\otimes W \cong W\times W$ then, as a $W$-algebra, $\OO\otimes W$ is generated by two orthogonal idempotents whose sum is the identity; namely, $(1,0)$ and $(0,1)$.  Note that as $(\OO\otimes W)/p = \OO\otimes k$ modules, $M$ is free of rank two while the Dieudonn\'{e} submodules $\M((\ZZ/p\ZZ)^2(\rho))$ and $\M((\ZZ/p\ZZ)^2(\rho)^\vee)$ are both free of rank one as $\OO\otimes k$ modules.  This implies that, up to a change of basis, the action of $(1,0)$ and $(0,1)$ on $M$ can be represented by the matrices $$f_1 =\begin{pmatrix}1&&&\\&0&&\\&&1&\\&&&0\end{pmatrix} \mbox{ and } f_2 = \begin{pmatrix}0&&&\\&1&&\\&&0&\\&&&1\end{pmatrix}$$ respectively.  Then extend $W$-linearly to get a representation of the $W\times W$-action on $M$.  

We also modify the second condition on $L_2$ so that we now require $L_2 = L_2/p$ to be isomorphic to $M/F(M)$ as $\OO\otimes W$-modules.  Let $e_1, e_2, e_3$ and $e_4$ denote the $k$-basis of $M$ chosen so that the generators $(1,0)$ and  $(0,1)$ of $W\times W$ act as $f_1$ and $f_2$ on $M$.  Then this modified condition on $L_2$ implies that $L_2$ is isomorphic to $M/F(M) = \OO\otimes k(e_3 + e_4)$ as $\OO\otimes k$-modules.  (We still have $F(ke_1\oplus ke_2) = ke_1\oplus ke_2$ and $F(e_3) = F(e_4) = 0$ since the chosen basis maintains the decomposition of $M$ into its corresponding connected and \'{e}tale parts.)  Thus $L_2 \subset M$ is of the form  $\OO\otimes k(\alpha e_1 + \beta e_2 + e_3 + e_4)$ for some $\alpha, \beta \in k$.  That is, there are $p^{2d}$ isomophism classes of lifts of $A$ to $W_2$. 


As in Proposition \ref{liftsofabsur}, the greatest number of isomorphism classes of lifts with elevated $p$-rank occur when $\chi = 1$, so we make this assumption.  Then to determine the lifts with elevated $p$-rank, we need to again detect which $(L_2,M)$ admit a splitting; meaning, when we can write $$(L_2,M) = (0,\OO\otimes k(e_1+\gamma e_2)) \oplus (L_2', M') \mbox{ \ \ or \ \ } (L_2,M) = (0,(\OO\otimes k)e_2) \oplus (L_2', M')$$ where $(L_2', M')$ corresponds to a lift of $\ZZ/p\ZZ \times \mu_p^2$ or $\mu_p^2$ and $\gamma \in k$.  If $\gamma \neq 0$ then $(0,\OO\otimes k(e_1 + \gamma e_2))$ must correspond to a lift of $(\ZZ/p\ZZ)^2$ since $\OO\otimes k(e_1 +\gamma e_2)$ is a two dimensional $k$-vector space.  Hence, the only possible splitting of $(L_2, M)$ is the canonical one where $L_2 = \OO\otimes k(e_3 + e_4)$.  If $\gamma = 0$ then $(L_2, M)$ splits if and only if $L_2 = \OO\otimes k(\beta e_2 + e_3 + e_4)$ for some $\beta \in k$.  Similarly, $(L_2, M)$ admits a direct factor of the form $(0,(\OO\otimes k)e_2)$ exactly when $L_2 = \OO\otimes k(\alpha e_1 +e_3+e_4)$ for some $\alpha \in k$.  Thus there are a total of $2p^d -1$ different isomophism classes of lifts with elevated $p$-rank.


Finally, we treat the case where $\OO\otimes W \cong W[\sqrt{D}]$.  Now generators of $\OO\otimes W$ as a $W$-module are $1$ and $\sqrt{D}$.  The action of 1 on $M$ is represented by the identity matrix while the action of $\sqrt{D}$ on $M$ is represented by a matrix of the form $$\begin{pmatrix}B&0\\0&C\end{pmatrix}$$ where $B$ and $C$ are $2\times2$ matrices with entries in $k$ whose minimal polynomials are $X^2 - D$ (since $\sqrt{D} \not\in k$).  Writing this in rational canonical form shows that, up to a change of basis, $$B=C = \begin{pmatrix} 0&D\\1&0\end{pmatrix}.$$  

From here, the argument follows as in the case that $\OO\otimes W \cong W\times W$ except that now there is exactly one isomorphism class lifting $A$ to $W_2$ with elevated $p$-rank.  Indeed, we always have the canonical lift corresponding to the case where $L_2 = \OO\otimes k(e_3 + e_4)$; that is, the situation where $G = (\ZZ/p\ZZ)^2 \times \mu_p^2$.  To show this is the only possibility, it suffices to assume that $\chi =1$ since this is when we have the most options for decomposing $G$ as $\ZZ/p\ZZ\times G'$.  Thus, in terms of restricted Honda systems, we want to show that if $$(L_2,M) = (0,\OO\otimes k(e_1+\gamma e_2)) \oplus (L_2', M') \mbox{ \ \ or \ \ } (L_2,M) = (0,(\OO\otimes k)e_2) \oplus (L_2', M'),$$ where $(L_2', M')$ corresponds to a lift of $\ZZ/p\ZZ \times \mu_p^2$ or $\mu_p^2$ and $\gamma \in k$, then $(L_2',M')$ must correspond to a lift of $\mu_p^2$.  

Suppose $(L_2, M) = (0,\OO\otimes k(e_1 + \gamma e_2)) \oplus (L_2',M')$ for some $\gamma \in k$.  Then $$\sqrt{D}\cdot(e_1+\gamma e_2) = D\gamma e_1 + e_2,$$ so $e_1 + \gamma e_2$ and $\sqrt{D}\cdot(e_1+\gamma e_2)$ are linearly independent over $k$.  That is, the restricted Honda system $(0,\OO\otimes k(e_1 + \gamma e_2))$ corresponds to a lift of $(\ZZ/p\ZZ)^2$ to $W_2$.  Thus the only possibility for $(L_2',M')$ is $(\OO\otimes k(e_3 + e_4), \OO\otimes k(e_3 + e_4))$; i.e., $(L_2, M)$ must split canonically if it splits at all.  An entirely similar argument shows that this is also the case when we replace $\OO\otimes k(e_1 + \gamma e_2)$ with $(\OO\otimes k)e_2$.
\end{proof}

\begin{remark}
The underlying reason or philosophy for why one would expect $p^{2d}$ isomorphism classes of lifts of $A$ to $W_2$ is that Hilbert modular surfaces are moduli spaces for abelian surfaces with real multiplication.  In other words, the moduli space is two-dimensional.
\end{remark}

Finally, if $p$ splits in $\OO$ then an abelian surface $A$ over $k$ with real multiplication by $\QQ(\sqrt{5})$ can  be nonordinary and nonsupersingular (see \cite{Goren}).  Namely, $$A[p] \cong A[p]_{\mbox{ll}} \times \ZZ/p\ZZ(\chi)^\vee \times \ZZ/p\ZZ(\chi).$$

\begin{lemma} \label{nonordlifts}
Suppose that $p$ splits in $\OO$.  Let $A$ be a nonordinary and nonsupersingular abelian surface over $k$ with real multiplication by $\QQ(\sqrt{D})$ and assume that $A(k)[p] \neq 0$.  Then exactly $p^{d}$ of the $p^{2d}$ isomorphism classes of lifts of $A$ to an abelian surface $A'$ over $W_2$ satisfy $$\rank_pA'(W_2) \geq 2d+1.$$  
\end{lemma}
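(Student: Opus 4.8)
The plan is to mimic the proofs of Proposition~\ref{liftsofabsur} and Lemma~\ref{liftsofabsurrm}, working with restricted Honda systems over $W_2$ equipped with an $\OO\otimes W$-module structure. The first task is to determine the special fibre. Since $p$ splits in $\OO$ we have $\OO\otimes W\cong W\times W$, and the two idempotents split $A[p^\infty]=A[\p^\infty]\times A[\ol\p^\infty]$ into Barsotti--Tate groups of height $2$ and dimension $1$. Each of these is ordinary or supersingular, and since an abelian surface that is neither ordinary nor supersingular has $p$-rank $1$, exactly one factor, say $A[\p^\infty]$, is supersingular while $A[\ol\p^\infty]$ is ordinary. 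Hence $A[p]_{\mbox{ll}}=A[\p]$ has order $p^2$ and lies entirely in the $\p$-component of $A[p]$, while $A[\ol\p]=\ZZ/p\ZZ(\chi)^\vee\times\ZZ/p\ZZ(\chi)$ lies entirely in the $\ol\p$-component; since $A(k)[p]\neq 0$ and $A[p]_{\et}$ is one-dimensional over $k$, the character $\chi$ is trivial and $A[p]_{\et}=\ZZ/p\ZZ$ is the constant group scheme.

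Dually, $M:=\M(A[p])$ decomposes as $M=N\oplus M_{\et}\oplus M_{\mult}$, where $N=\M(A[\p])$ is two-dimensional and equals the $\p$-component, and $M_{\et}=\M(\ZZ/p\ZZ)$, $M_{\mult}=\M(\mu_p)$ are one-dimensional and span the $\ol\p$-component. On $N$ both $F$ and $V$ have rank one, with $\ker(V|_N)=F(N)=\ker(F|_N)$; on $M_{\et}$ one has $F$ bijective and $V=0$; and on $M_{\mult}$ one has $F=0$ and $V$ bijective. Thus $F(M)=F(N)\oplus M_{\et}$ and $M/F(M)=(N/F(N))\oplus M_{\mult}$ is free of rank one over $\OO\otimes k\cong k\times k$. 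By Serre--Tate lifting \cite{Katz} and Proposition~\ref{BTWnclass}, lifts of $A$ to $W_2$ correspond, exactly as in Lemma~\ref{liftsofabsurrm}, to the $\OO\otimes k$-submodules $L_2\subset M$ that are free of rank one, isomorphic to $M/F(M)$ as $\OO\otimes k$-modules, and satisfy $V|_{L_2}$ injective and $L_2\rightarrow M/F(M)$ an isomorphism. Writing $L_2=L_2^{\p}\oplus L_2^{\ol\p}$, the $\p$-component $L_2^{\p}$ is a line in $N$ with $L_2^{\p}\neq F(N)$ ($p^d$ choices) and the $\ol\p$-component $L_2^{\ol\p}$ is a line in $M_{\et}\oplus M_{\mult}$ with $L_2^{\ol\p}\neq M_{\et}$ ($p^d$ choices); this recovers the $p^{2d}$ isomorphism classes of lifts.

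Next I would apply Lemma~\ref{W2condition}. Since $A(k)[p]=\F_p$ is one-dimensional, $\rank_pA'(W_2)\in\{2d,\,2d+1\}$, and $\rank_pA'(W_2)\geq 2d+1$ if and only if the constant group scheme $\ZZ/p\ZZ$ is a direct factor of $A'[p]$; as $A[p]_{\et}=\ZZ/p\ZZ$ is one-dimensional there is exactly one copy of $\ZZ/p\ZZ$ inside $A[p]$, so this is a genuine equivalence, not merely a one-sided bound. In terms of restricted Honda systems, $\ZZ/p\ZZ$ is a direct factor of $A'[p]$ precisely when $L_2\subseteq N\oplus M_{\mult}$ (the $\OO\otimes k$-stable complement of $M_{\et}$). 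Since $L_2^{\p}\subseteq N$ automatically, this is equivalent to $L_2^{\ol\p}\subseteq M_{\mult}$, which forces $L_2^{\ol\p}=M_{\mult}$. Therefore the lifts with elevated $p$-rank are exactly those with $L_2^{\ol\p}=M_{\mult}$ and $L_2^{\p}$ arbitrary among its $p^d$ admissible choices, and there are $p^d$ of them.

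The step I expect to require the most care is establishing the structure of $A[p]$ and of $M$ in the nonordinary, nonsupersingular case, together with its compatibility with the $\OO\otimes k$-action: namely, that $A[\p^\infty]$ must be of supersingular type, so that $F$ and $V$ act with rank one on $N$ (ruling out $A[p]_{\mbox{ll}}\cong\alpha_p^2$, which is incompatible with the classification of connected Barsotti--Tate groups of height one), and that the idempotent splitting is compatible with the \'{e}tale/multiplicative/local-local decomposition and with $F$ and $V$ as asserted. Granting this structure and the equivalence provided by Lemma~\ref{W2condition}, the count $p^d$ follows from the linear algebra above.
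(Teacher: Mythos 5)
Your proposal is correct and follows essentially the same route as the paper: Serre--Tate lifting plus the classification of Barsotti--Tate groups over $W_2$ by restricted Honda systems with an $\OO\otimes W$-module structure, then a count of admissible $L_2$ and of those with a $\ZZ/p\ZZ$ direct factor. The main cosmetic difference is that you first split $A[p^\infty]=A[\p^\infty]\times A[\ol\p^\infty]$ via the idempotents of $\OO\otimes W\cong W\times W$ and then describe $L_2=L_2^{\p}\oplus L_2^{\ol\p}$ component by component, whereas the paper sets up the $k\times k$-action directly on the Dieudonn\'{e} module via the ${\rm ll}$/\'{e}tale/multiplicative decomposition and Cartier duality, and parameterizes $L_2$ by an explicit basis; the counts ($p^{2d}$ lifts, $p^d$ elevated) agree. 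Two small things worth keeping: your observation that $A(k)[p]$ is one-dimensional here so the criterion from Lemma \ref{W2condition} is an equivalence (hence ``exactly $p^d$'') is the right justification for the sharp count and is a genuine point the paper leaves implicit; and the chain of equalities $\ker(V|_N)=F(N)=\ker(F|_N)$ does hold (on a $2$-dimensional local-local module with $F,V$ both of rank $1$, nilpotency forces $F^2=V^2=0$, and $FV=VF=0$ forces $\operatorname{im}F=\ker V$ and $\operatorname{im}V=\ker F$, so all four subspaces coincide), though your argument only actually uses $V|_{L_2^{\p}}$ injective $\Leftrightarrow L_2^{\p}\ne F(N)$.
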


\begin{proof}
In this case, $\OO\otimes W \cong W \times W$.  Since $A(k)[p] \neq 0$, we know that $\chi =1$.  Moreover, note that there are no nontrivial morphisms between local-local, \'{e}tale-local, and local-\'{e}tale group schemes, so the decomposition \begin{equation}\label{lldecomp}M = \M(A[p]_{\mbox{ll}}) \oplus \M(\mu_p) \oplus \M(\ZZ/p\ZZ)\end{equation} of Dieudonn\'{e} modules still obtains as $\OO\otimes W$-modules.  Applying Cartier duality shows that the action of $\OO\otimes W$ on $\M(\mu_p)$ and $\M(\ZZ/p\ZZ)$ must be the same.  Since $M$ is a free $\OO\otimes k$-module of rank 2 and the $\OO \otimes k$-action on $M$ stabilizes each direct summand of (\ref{lldecomp}), this implies that the action of the $k$-algebra generators $(1,0)$ and $(0,1)$ of $k\times k$ on $M$ must be represented by matrices over $k$ of the form $$f_1 =\begin{pmatrix}1&&&\\&1&&\\ && 0&\\&&&0\end{pmatrix} \mbox{ and } f_2 = \begin{pmatrix}0&&&\\&0&&\\&&1&\\&&&1\end{pmatrix}.$$  In other words, one of $(1,0)$ or $(0,1)$ acts as the identity on $\M(A[p]_{\mbox{ll}})$ and trivially on $\M(\mu_p) \oplus \M(\ZZ/p\ZZ)$ while the other acts trivially on $\M(A[p]_{\mbox{ll}})$ and as the identity on $\M(\mu_p) \oplus \M(\ZZ/p\ZZ)$.

It is shown in \cite{Pries} that $\M(A[p]_{\mbox{ll},\ol{k}})/F\M(A[p]_{\mbox{ll},\ol{k}})$ is a one dimensional $\ol{k}$-vector space.  Since the functor $\M$ commutes with base change (see, for example, \cite[Section 4]{Conrad2}), we can conclude that $\M(A[p]_{\mbox{ll}})/F\M(A[p]_{\mbox{ll}})$ is a one dimensional $k$-vector space.  Let $e_1,e_2, e_3$ and $e_4$ be a $k$-basis of $M$ such that $e_1$ and $e_2$ correspond to a basis of the summand $\M(A[p]_{\mbox{ll}})$ where $Fe_1=0$ and $e_2$ is a lift of a basis element of $\M(A[p]_{\mbox{ll}})/F\M(A[p]_{\mbox{ll}})$, while $e_3$ and $e_4$ correspond to bases of the summands $\M(\mu_p)$ and $\M(\ZZ/p\ZZ)$ respectively.  Then $F(M) = \OO\otimes k(e_2 + e_4)$, so $L_2 \cong M/F(M) = \OO\otimes k(e_1 +e_3)$.  We conclude that $L_2$ may be any $\OO\otimes k$-submodule of $M$ of the form $\OO\otimes k(e_1+\alpha e_2 + e_3 + \beta e_4)$.  Thus there are a total of $p^{2d}$ isomorphism classes of lifts to $W_2$.

To determine the lifts with elevated $p$-rank, we need to again detect which pairs $(L_2, M)$ admit a splitting $(0,ke_4) \oplus (L_2', M')$ where $(L_2', M')$ corresponds to a lift of $A[p]_{\mbox{ll}}\times \mu_p$.  This is possible precisely when $\beta = 0$, so there are a total of $p^d$ isomorphism classes of lifts of $A$ to $W_2$ with elevated $p$-rank.
\end{proof}

The following is the main result needed in Section 5.

\begin{corollary}\label{liftingcor}
Let $A$ be an abelian surface over $\F_p$ with real multiplication by $\QQ(\sqrt{5})$.  Suppose that $A(k)[p]\neq 0$.  Then there are at most $2p - 1$ isomorphism classes of lifts to an abelian surface $A'$ over $\ZZ/p^2\ZZ$ satisfying the condition $$\rank_pA'(W_2) \geq 2d +1.$$
\end{corollary}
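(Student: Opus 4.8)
The plan is to combine the three lifting results of this section -- Proposition \ref{liftsofabsur}, Lemma \ref{liftsofabsurrm}, and Lemma \ref{nonordlifts} -- by running over the possible reduction types of an abelian surface over $\F_p$ with real multiplication by $\QQ(\sqrt{5})$. The key observation is that here $d$ is taken to be $1$, so $k = \F_p$, $W = \ZZ_p$, $W_2 = \ZZ/p^2\ZZ$, and the bounds $2p^d - 1$, $1$, and $p^d$ appearing in those results all specialize to quantities that are at most $2p - 1$. So the corollary is essentially a bookkeeping consolidation: I would first note that if $A(\F_p)[p] \neq 0$ then $A$ cannot be supersingular (a supersingular abelian surface over $\F_p$ has no nontrivial $p$-torsion points, since its $p$-divisible group is local-local), which leaves exactly three cases for the reduction type.

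First I would dispose of the ordinary case. If $A$ is ordinary, then one of two things happens depending on whether $5$ splits in $\OO = \ZZ[\tfrac{1+\sqrt5}{2}]$, equivalently whether $\OO \otimes \ZZ_p \cong \ZZ_p \times \ZZ_p$ or $\OO \otimes \ZZ_p$ is the unramified quadratic extension of $\ZZ_p$. By Lemma \ref{liftsofabsurrm} (with $d = 1$), in the split case there are at most $2p - 1$ isomorphism classes of lifts with elevated $p$-rank, and in the inert case there is exactly one such class; in both subcases the count is $\leq 2p - 1$. Second, I would treat the nonordinary--nonsupersingular case: this can occur only when $5$ splits in $\OO$ (as noted just before Lemma \ref{nonordlifts}), and then Lemma \ref{nonordlifts} with $d = 1$ gives exactly $p$ isomorphism classes of lifts with elevated $p$-rank, and $p \leq 2p - 1$. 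Taking the maximum over the (at most three) possible reduction types gives the bound $2p - 1$, which is the claim.

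The only genuinely substantive point, and the one I would want to state carefully rather than wave at, is the reduction to these three cases -- i.e. the claim that $A(\F_p)[p] \neq 0$ rules out the supersingular type and that no other Newton-polygon type for an abelian surface with RM by $\QQ(\sqrt5)$ occurs. The possible $p$-divisible group types are controlled by how $p$ behaves in $\OO$ together with the ordinary/nonordinary dichotomy, and the relevant classification (including the existence of the nonordinary, nonsupersingular type exactly when $p$ splits) is exactly what \cite{Goren} and \cite{Pries} supply; I would cite those and not reprove them. Everything after that is the arithmetic comparison $\max\{2p-1,\ 1,\ p\} = 2p - 1$, which needs no comment. I do not expect any real obstacle here -- the work was already done in the three preceding results, and this corollary merely packages their conclusions into the single statement that Section 5 will invoke.
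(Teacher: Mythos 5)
There is a genuine gap, stemming from a misreading of what the corollary asserts. You interpret the statement as the $d=1$ specialization of Lemmas \ref{liftsofabsurrm} and \ref{nonordlifts}, writing ``here $d$ is taken to be $1$, so $k = \F_p$, $W = \ZZ_p$, $W_2 = \ZZ/p^2\ZZ$.'' But $d$ is \emph{not} set to $1$ in the corollary. The hypotheses and conclusion still involve the degree-$d$ extension $k$ of $\F_p$ and its Witt ring $W_2 = W(k)/p^2$ from Section~4.1; what has changed is that the surface $A$ now lives over the \emph{smaller} field $\F_p$ and its lifts $A'$ live over $\ZZ/p^2\ZZ$, while the elevated $p$-rank condition $\rank_p A'(W_2) \geq 2d+1$ is still taken over the bigger ring $W_2$. (This is exactly the version needed in Section~5, where $\nu_d(p)$ ranges over $y \in \PP^2(\ZZ/p^2)$ but the rank condition is over $W_2$.) Your argument proves only that at most $2p-1$ lifts over $\ZZ/p^2\ZZ$ satisfy $\rank_p A'(\ZZ/p^2\ZZ) \geq 3$. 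Since having a nontrivial $p$-torsion point over $\QQ_p$ implies having one over the unramified extension $K$ but not conversely, the set of lifts satisfying $\rank_p A'(W_2) \geq 2d+1$ is a priori \emph{larger} than the set you bounded, so your bound does not transfer.

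The missing ingredient is the observation that the functor $G \leadsto (L_2(G), \M(G_{\F_p}))$ on finite flat $\ZZ/p^2\ZZ$-group schemes commutes with unramified base change (\cite[Theorem~4.8]{Conrad2}). This lets one identify the lifts of $A[p]$ from $\F_p$ to $\ZZ/p^2\ZZ$ with the restricted Honda systems over $W_2$ whose parameters $\alpha_i, \beta_i$ lie in $\F_p$ rather than in $k$; there are $p^2$ of these, not $p^{2d}$. The elevated-rank condition over $W_2$ still imposes the same linear constraints on the $\alpha_i, \beta_i$ as in the proofs of Lemmas \ref{liftsofabsurrm} and \ref{nonordlifts}, and since those parameters are now drawn from $\F_p$, the count is at most $2p - 1$ (ordinary split), $1$ (ordinary inert), or $p$ (nonordinary nonsupersingular). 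It is not a coincidence that your $d=1$ reading produced the same numerical bound -- both computations amount to counting parameters in $\F_p$ -- but your proof as written establishes a strictly weaker statement. The case analysis by reduction type (and the exclusion of the supersingular case because $A(k)[p] \neq 0$) is correct and matches the paper; what needs to be added is the base-change reduction.
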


\begin{proof}
This is a direct consequence of Lemmas \ref{liftsofabsurrm} and \ref{nonordlifts}, and the fact that the functor $G \leadsto (L_2(G),\M(G_{\F_p}))$ for finite flat $\ZZ/p^2\ZZ$-group schemes $G$ commutes with unramified base change by \cite[Theorem 4.8]{Conrad2}.  To see this, suppose we have a finite extension $k/\F_p$ that yields an extension of rings $\ZZ_p \hookrightarrow W$.  Then on restricted Honda systems over $\ZZ/p^2\ZZ$, the base change operator is given by tensoring with $W$ and defining $F$ and $V$ appropriately (which we omit since we will not need $F$ and $V$).  That is, $W_2 \times_{\ZZ/p^2\ZZ} G$ corresponds to the restricted Honda system over $W_2$ $$(W\otimes_{\ZZ_p} L_2(G), W\otimes_{\ZZ_p} \M(G_{\F_p})).$$  Thus the $p^2$ isomorphism classes of lifts of $A[p]$ to $\ZZ/p^2\ZZ$ are determined by the restricted Honda systems over $W_2$ where in the notation of (\ref{L2}), $\alpha_1, \alpha_2, \beta_1,$ and $\beta_2$ all lie in $\ZZ/p^2\ZZ$.  Similarly, to say that $A'$ over $\ZZ/p^2\ZZ$ has $\rank_pA'(W_2) \geq 2d+1$ is the same as saying that $\rank_p(W_2\times_{\ZZ/p^2\ZZ}A')(W_2) \geq 2d+1$.   Therefore, in terms of restricted Honda systems, this means that the $\alpha_i$ and $\beta_i$ from (\ref{L2}) must be as in the proofs of Lemma \ref{liftsofabsurrm} or Lemma \ref{nonordlifts} and, as we just discussed, they must lie in $\ZZ/p^2\ZZ$.  Hence there are at most $2p -1$ possibilities for the $\alpha_i$ and $\beta_i$ that give lifts of $A$ to $\ZZ/p^2\ZZ$ with elevated $p$-rank after base changing to $W_2$.
\end{proof}

\begin{remark}
Although we will eventually require the abelian surfaces to come endowed with a principal polarization, this does not change the numerics of Lemmas \ref{liftsofabsurrm} and \ref{nonordlifts} and, consequently Corollary \ref{liftingcor}.  This is because every principal polarization of an abelian surface over $k$ with real multiplication lifts to a principal polarization of $A'$ by \cite[Corollary 10.1.8]{vanderGeer2}.
\end{remark}

\subsection{Local torsion over ramified extensions of $\QQ_p$}

Up until this point, we have assumed that $K$ is an unramified extension of $\QQ_p$.  The last result of this section enables us to address the possibility that $K$ may be ramified in the statement of Theorem \ref{mainthm}.  

\begin{lemma}\label{Raynaudram}
Let $A$ be an abelian scheme over a complete dvr $R$ with mixed characteristic $(0,p)$ and fraction field $E$.  Suppose that $A[p](K) \neq 0$ for some finite extension $K/E$ with ramification index $e(K/E) < p-1$, then $A[p](K^{\ur}) \neq 0$ where $K^\ur$ is the maximal unramified subfield of $K$.
\end{lemma}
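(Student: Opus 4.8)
The plan is to combine Raynaud's theorem on prolongations of finite flat group schemes with the connected--\'etale sequence of $A[p]$ over the integers and a prime-to-$p$ obstruction argument: first push the given $p$-torsion point into the \'etale quotient of $A[p]$ (where, by good reduction, it is automatically defined over $K^{\ur}$), then lift it back up to a point of $A[p]$ over $K^{\ur}$. Concretely, write $\OO_K$ and $\OO_{K^{\ur}}$ for the rings of integers of $K$ and $K^{\ur}$; these are complete discrete valuation rings with a common residue field $\kappa$, and $[K:K^{\ur}] = e(K/E) =: e < p-1$. Put $G = A[p]$, a finite flat $R$-group scheme killed by $p$; since $G \to \spec R$ is finite, hence proper, the valuative criterion gives $G(\OO_K) = G(K)$ and $G(\OO_{K^{\ur}}) = G(K^{\ur})$. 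Over each of the henselian rings $\OO_K$, $\OO_{K^{\ur}}$ there is a connected--\'etale sequence $0 \to G^{0} \to G \to G^{\et} \to 0$ with $G^{\et}$ finite \'etale; as $\OO_{K^{\ur}} \to \OO_K$ induces an isomorphism on residue fields, the formation of this sequence is compatible with that base change, and for the \'etale piece one gets a chain of canonical identifications $G^{\et}(K) = G^{\et}(\OO_K) = G^{\et}(\kappa) = G^{\et}(\OO_{K^{\ur}}) = G^{\et}(K^{\ur})$.

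The first step is to show that the image $P^{\et}$ in $G^{\et}(K)$ of a nonzero $P \in G(K)$ is nonzero; this is where Raynaud's theorem enters. The subgroup $\langle P\rangle_K \cong \ZZ/p\ZZ$ is a finite flat $K$-subgroup scheme of $G_K$ killed by $p$, and its flat closure $\mathcal{G} \subseteq G_{\OO_K}$ is a finite flat $\OO_K$-subgroup scheme of order $p$ prolonging $(\ZZ/p\ZZ)_K$. Because $e < p-1$, Raynaud's theorem shows that the generic-fibre functor on finite flat $\OO_K$-group schemes killed by $p$ is fully faithful, so such a group scheme is determined by its generic fibre; since $(\ZZ/p\ZZ)_{\OO_K}$ is also a prolongation of $(\ZZ/p\ZZ)_K$, we conclude $\mathcal{G} \cong (\ZZ/p\ZZ)_{\OO_K}$, and in particular $\mathcal{G}$ is \'etale. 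An \'etale subgroup scheme meets $G^{0}$ trivially --- the intersection is \'etale with connected special fibre, hence trivial --- so $\mathcal{G} \hookrightarrow G^{\et}_{\OO_K}$; thus $P \mapsto P^{\et}$ is injective on $\mathcal{G}(\OO_K)$ and $P^{\et} \neq 0$. By the identifications above, $P^{\et}$ already lies in $G^{\et}(K^{\ur})$.

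The second step is to lift $P^{\et}$ to a point of $G$ over $K^{\ur}$. Over the characteristic-$0$ field $K^{\ur}$ the connected--\'etale sequence becomes a short exact sequence of finite $\F_p[G_{K^{\ur}}]$-modules $0 \to M^{0} \to M \to M^{\et} \to 0$ with $M = G(\overline{K^{\ur}})$, and the obstruction to lifting the invariant element $P^{\et} \in (M^{\et})^{G_{K^{\ur}}}$ to $M^{G_{K^{\ur}}}$ is $\delta(P^{\et}) \in H^{1}(G_{K^{\ur}}, M^{0})$. Restricting to $G_K \subseteq G_{K^{\ur}}$ turns this into the obstruction to lifting $P^{\et}$ over $K$, which vanishes because $P$ is such a lift; hence $\delta(P^{\et})$ lies in the kernel of $\res\colon H^{1}(G_{K^{\ur}}, M^{0}) \to H^{1}(G_K, M^{0})$. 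That kernel is annihilated by the index $[G_{K^{\ur}} : G_K] = [K : K^{\ur}] = e$ (corestriction--restriction, which is multiplication by the index and needs no normality of $G_K$, hence no Galois hypothesis on $K/K^{\ur}$), and it is also annihilated by $p$ since $M^{0}$ is; as $\gcd(e,p)=1$ it is zero. Therefore $\delta(P^{\et}) = 0$, $P^{\et}$ lifts to some $Q \in M^{G_{K^{\ur}}} = G(K^{\ur})$, and $Q \neq 0$ since it maps to $P^{\et} \neq 0$; this gives $A[p](K^{\ur}) \neq 0$.

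I expect the main obstacle to be this last lifting step, and the subtlety to manage is that the hypothesis $e < p-1$ gets used twice and for different reasons: once through Raynaud's theorem (small absolute ramification of $\OO_K$, so that the order-$p$ prolongation of the constant group is \'etale) and once through tameness ($e$ is prime to $p$, so that the obstruction class dies under corestriction). One must set up the connected--\'etale sequence carefully enough that the obstruction genuinely lands in a cohomology group where both facts bite, and handle the possibility that $K/K^{\ur}$ is not Galois by working with corestriction rather than Galois descent.
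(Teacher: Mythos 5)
Your proof is correct, and it takes a genuinely different route from the paper's. Both arguments hinge on Raynaud's theorem applied over $\OO_K$, but the paper works globally over $R$: it first replaces $K$ by its Galois closure over $E$ (harmless, since the extension is tame), forms the $\F_p$-span of the $G_E$-orbit of the given nonzero $K$-point, interprets this as a finite \'etale $E$-subgroup of $A[p]_E$, takes its scheme-theoretic closure $H$ in $A[p]$ over $R$, and applies Raynaud over $\OO_K$ to conclude that $H$ is finite \'etale over $R$; since $H$ is \'etale over $R$ and contains a nonzero $K$-point, that point (being fixed by inertia and by $G_K$) is already $K^\ur$-rational. Your argument instead works with the connected--\'etale filtration over the henselian rings $\OO_K$ and $\OO_{K^\ur}$: Raynaud shows the flat closure of $\langle P\rangle$ in $A[p]_{\OO_K}$ is constant and hence meets the connected part trivially, so $P$ has nonzero image $P^{\et}$ in the \'etale quotient --- which you identify with its value over $K^\ur$ --- and a corestriction--restriction argument on the obstruction class in $H^1(G_{K^\ur}, M^0)$ lifts $P^{\et}$ back to a nonzero element of $G(K^\ur)$. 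The paper's Galois-orbit device buys an $R$-rational \'etale subgroup, making the descent to $K^\ur$ immediate and dispensing with any cohomological lifting; your version avoids the passage to the Galois closure and the orbit construction, at the cost of the prime-to-$p$ transfer step. Both proofs use $e<p-1$ once for Raynaud and use tameness (just $\gcd(e,p)=1$) a second time --- for the Galois closure in the paper's case, for cor--res in yours --- and both, as written, implicitly require the absolute ramification of $\OO_K$ over $\ZZ_p$ to be $<p-1$ (i.e.\ $R$ absolutely unramified), which holds in the intended application $R=\ZZ_p$.
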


\begin{remark}
The following proof for Lemma \ref{Raynaudram} was suggested to me by Brian Conrad.
\end{remark}

\begin{proof}
Let $Y = A[p]$.  The idea is to show that $Y$ has a nonzero finite \'{e}tale subgroup $H$ since then the generic fiber $H_E$ becomes constant after some finite unramified base change (that is, all geometric points of $H_E$ are defined over $K^\ur$).  To get $H$, first note we may assume that $K/E$ is a Galois extension since $K$ is only tamely ramified over $E$ ($e(K/E) < p-1$) and, hence, its Galois closure over $E$ has the same ramification degree.  Pick any nonzero point $x$ in $Y(K)$.  Then its $G_E$-orbit is a $\Gal(K/E)$-stable subset $S$ of $Y(K)$.  Taking the $\F_p$-span of $S$ gives a $G_E$-submodule of $Y(\ol{E})$ and, thus, a nonzero (finite \'{e}tale) $E$-subgroup scheme of $Y_E$ via Grothendieck's functor (see \cite[Section 3.6]{Tate2}).  Then the scheme-theoretic closure of this $E$-subgroup is a nontrivial finite flat $R$-subgroup scheme $H$ of $Y$ whose generic fiber $H_E$ becomes constant after a base change to $K$.  To see that $H$ itself is finite \'{e}tale, it suffices to show that $H_{\OO_K}$ is finite \'{e}tale, where $\OO_K$ is the ring of integers of $K$.  This, however, follows from an application of Raynaud's theorem \cite[Theorem 4.5.1]{Tate2}, which implies that $H_{\OO_K}$ is constant and, hence, finite \'{e}tale.
\end{proof}

\begin{corollary}\label{ramifiedcase}
Let $A$ be an abelian surface over $\QQ$ with real multiplication by $\QQ(\sqrt{D})$.  Suppose $A(K)[p]\neq 0$ for some $K/\QQ_p$ such that $[K:\QQ_p] \leq d$.  If $p-1>d$ and $A$ has good reduction at $p$, then $A(K^\ur)[p]\neq 0$.
\end{corollary}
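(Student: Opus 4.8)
The plan is to derive this statement directly from Lemma \ref{Raynaudram} once its hypotheses have been verified. First I would invoke the good-reduction assumption: since $A$ has good reduction at $p$, its base change $A_{\QQ_p}$ extends to an abelian scheme $\A$ over $R := \ZZ_p$, which is a complete discrete valuation ring of mixed characteristic $(0,p)$ with fraction field $E := \QQ_p$. Because the generic fiber $\A_E$ is canonically $A_{\QQ_p}$, for every algebraic extension $L/\QQ_p$ we have $\A[p](L) = A(L)[p]$; in particular the hypothesis $A(K)[p] \neq 0$ becomes $\A[p](K) \neq 0$.

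Next I would check the tameness condition. For the finite extension $K/\QQ_p$ the ramification index satisfies $e(K/\QQ_p) \le [K:\QQ_p] \le d$, and the assumption $p - 1 > d$ then gives $e(K/\QQ_p) < p - 1$. Thus Lemma \ref{Raynaudram}, applied to the abelian scheme $\A/R$ and the extension $K/E$, yields $\A[p](K^\ur) \neq 0$, where $K^\ur$ denotes the maximal unramified subextension of $K/\QQ_p$. Translating back through $\A_E = A_{\QQ_p}$ gives $A(K^\ur)[p] \neq 0$, which is the assertion.

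I do not expect any genuine obstacle in this argument: all of the substance --- constructing a nonzero finite \'{e}tale $R$-subgroup scheme of $A[p]$ from a nonzero $K$-point via Grothendieck's functor and Raynaud's theorem --- is already contained in the proof of Lemma \ref{Raynaudram}. The only points needing (routine) care are identifying the good reduction of $A$ at $p$ with an abelian scheme over $\ZZ_p$ whose generic fiber recovers $A_{\QQ_p}$, and the standard local-field inequality $e(K/\QQ_p) \le [K:\QQ_p]$. The real multiplication by $\QQ(\sqrt{D})$ is not used and is carried along only to keep the hypotheses aligned with those of Theorem \ref{mainthm}.
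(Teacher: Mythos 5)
Your proof is correct and is exactly the argument the paper intends: Corollary \ref{ramifiedcase} is treated as an immediate consequence of Lemma \ref{Raynaudram}, with the only points to verify being the passage from good reduction to an abelian scheme over $\ZZ_p$ and the inequality $e(K/\QQ_p) \le [K:\QQ_p] \le d < p-1$. You have filled in precisely those routine details, and your observation that the real multiplication hypothesis is not actually used is accurate.
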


\begin{remark}
Corollary \ref{ramifiedcase} may also be proven directly from analyzing the $p$-torsion Galois representation when restricted to a decomposition group at $p$.  Indeed, using the assumption that $A$ has real multiplication, we can reduce the argument to considering a 2-dimensional representation of the form $$\begin{pmatrix} \varepsilon\chi^{-1}& \ast\\0&\chi \end{pmatrix}$$ where $\varepsilon$ is the cyclotomic character, $\chi$ is an unramified character, and $\ast$ is either trivial or wildly ramified.  Finally, since $d < p-1$, the only way $A(K)[p] \neq 0$ is if $\chi$ factors through $\Gal(K/\QQ_p)$ and $\ast$ is trivial on $\Gal(\ol{\QQ_p}/K)$ and, hence, on $G_p$.
\end{remark}

\section{Analytic Methods}

\subsection{Moduli Space of Abelian Surfaces with RM by $\QQ(\sqrt{5})$}

Since it will suffice for our purposes, for simplicity, assume that $S$ is a $\ZZ[1/5]$-scheme.  Families in our moduli problem will be quadruplets $(A/S, \omega, i, (\alpha_1, \alpha_2))$ where
\begin{itemize}
\item $A/S$ is an abelian scheme of relative dimension 2,

\item $\omega$ is a principal polarization,

\item $i:\OO \hookrightarrow \End(A)$ is a homomorphism such that $\omega^{-1}i^\vee(\alpha)\omega = i(\alpha)$ for all $\alpha \in \OO$,

\item $(\alpha_1, \alpha_2)$ defines a level-$\sqrt{5}$ structure (see Definition \ref{levelstructure}).
\end{itemize}
We say $(A/S, \omega, i, (\alpha_1, \alpha_2))$ and $(A'/S, \omega', i', (\alpha_1', \alpha_2'))$ are equivalent if $A$ with its extra structure is isomorphic over $S$ to $A'$ with its extra structure.

\begin{definition}\label{levelstructure}
Let $\delta$ be a root of $x^2 - 5$ and let $A/S$ be as in our moduli problem.  A \emph{level-$\sqrt{5}$ structure} on $A$ is a pair of sections $\alpha_1,\alpha_2: S \rightarrow A$ such that
\begin{enumerate}
\item for all geometric points $s \in S$, the images $\alpha_1(s)$, $\alpha_2(s)$ form a basis of  the group scheme $A_s[\delta]:=\Ker(\psi_\delta)$ where $A_s = A \times s$ is the fiber of $A$ over $s$ and $\psi_\delta = i(\delta):A\rightarrow A$ is the multiplication by $\delta$ morphism,

\item $\psi_\delta\circ \alpha_j = e$ where $e: S \rightarrow A$ is the identity section.
\end{enumerate}
\end{definition}

It is known that a coarse moduli scheme $M$ over $\spec \ZZ[1/5]$ exists for this moduli problem.  Therefore, to show that $M$ is a fine moduli scheme, it suffices to check that families do not have any nontrivial automorphisms, which is exactly what Manoharmayum does in \cite[Propostion 1.1]{Mano}.

\begin{definition}\label{involution}
Let $\tilde{i}$ denote the composition $$\begin{CD} \OO @>\tilde{}>> \OO @>i>> \End_S(A)\end{CD}$$ where $\tilde{}:\OO \rightarrow \OO$ is the Galois involution on $\OO$.  Then define an involution $\dagger$ on $M$ by $(A/S, \omega, i, (\alpha_1,\alpha_2))^\dagger = (A/S, \omega, \tilde{i}, (\alpha_1,\alpha_2))$.
\end{definition}

We can construct a compactification $\ol{M}$ of $M$ by adding six cusps.  Hirzebruch carried out a detailed study of $\ol{M}$ over $\CC$ in \cite{hirzebruch1} and Manoharmayum showed in \cite{Mano} that Hirzebruch's arguments work over $\QQ$.  For our purposes, the main aspect of their work that we need is the following proposition.


\begin{proposition}\label{modspisomtop2}
The quotient $\ol{M}/\dagger$ is isomorphic over $\QQ$ to $\PP^2$.  Under this isomorphism, $\ol{M}$ is a double cover of $\PP^2$ and the six singular points of $\ol{M}$ $($corresponding to the cusps$)$ give a collection of six points of $\PP^2$ defined over $\QQ$.  
\end{proposition}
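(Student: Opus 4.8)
The strategy is to invoke the pre-existing analyses of $\ol{M}$ and transfer them to $\QQ$. First I would recall that Hirzebruch \cite{hirzebruch1} studies the Hilbert modular surface $\ol{M}$ over $\CC$ and identifies $\ol{M}/\dagger$ with $\PP^2_\CC$, realizing $\ol{M}$ as a double cover of $\PP^2$ ramified along a curve, and locating the images of the six cusps. Manoharmayum \cite{Mano} shows that these constructions are already defined over $\QQ$; in particular, the involution $\dagger$ of Definition \ref{involution} is a $\QQ$-automorphism of the $\QQ$-scheme $\ol{M}$, and the quotient $\ol{M}/\dagger$ exists as a smooth projective surface over $\QQ$ that base-changes to $\PP^2_\CC$. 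Thus the content of the proposition is: (i) $\ol{M}/\dagger \cong \PP^2$ already over $\QQ$, not just over $\CC$; (ii) the quotient map $\ol{M} \to \ol{M}/\dagger$ is a double cover; (iii) the six cusps descend to six $\QQ$-rational points of $\PP^2$.

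For (ii), the map $\ol{M} \to \ol{M}/\dagger$ is by construction a degree-$2$ quotient by the order-$2$ group $\langle\dagger\rangle$, so it is a double cover away from the fixed locus; this is immediate from the definition once one knows $\dagger$ has order $2$ and is not the identity, which follows because the Galois involution $\tilde{\ }$ on $\OO = \ZZ[\frac{1+\sqrt5}{2}]$ is nontrivial and acts nontrivially on the moduli data (a generic abelian surface with RM by $\QQ(\sqrt5)$ is not isomorphic to its $\tilde{i}$-twist). For (i), the key point is that a smooth projective surface $X$ over $\QQ$ with $X_{\CC} \cong \PP^2_{\CC}$ is itself isomorphic to $\PP^2_\QQ$ provided $X$ has a $\QQ$-rational point — Severi--Brauer varieties of dimension $2$ that are split by base change to $\CC$ (equivalently, everywhere locally trivial, or possessing a rational point) are trivial. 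So I would exhibit a $\QQ$-point on $\ol{M}/\dagger$: the image of any one of the cusps works, since the cusps of $\ol{M}$ are defined over $\QQ$ by Manoharmayum's analysis (they correspond to the $\QQ$-structure on the boundary of the Hilbert modular surface, indexed by ideal classes / cusps of $\SL_2(\OO)$, all rational here because $\QQ(\sqrt5)$ has class number $1$ and narrow class number $1$). This simultaneously gives (iii): the six cusps are $\dagger$-fixed or permuted among themselves in a way defined over $\QQ$ (in fact Hirzebruch's description shows $\dagger$ fixes the set of cusps and the images in $\PP^2$ are six distinct $\QQ$-rational points, the singular points of the image of $\ol{M}$).

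Concretely the steps in order: (1) cite \cite{Mano} to get that $\ol{M}$, the cusps, and $\dagger$ are all defined over $\QQ$ and that $\dagger$ has order $2$; (2) form $Y := \ol{M}/\dagger$ over $\QQ$ and cite \cite{Mano}/\cite{hirzebruch1} for $Y_\CC \cong \PP^2_\CC$ together with the fact that the double cover $\ol{M}_\CC \to \PP^2_\CC$ is ramified over a rational curve of degree $10$; (3) observe each cusp, being a $\QQ$-point of $\ol{M}$, maps to a $\QQ$-point of $Y$, so $Y(\QQ) \neq \emptyset$; (4) conclude $Y \cong \PP^2_\QQ$ by the Severi--Brauer triviality criterion (a del Pezzo surface of degree $9$ over a field with a rational point is $\PP^2$); (5) track the images of the six cusps under $Y_\CC \cong \PP^2_\CC$ to see they are six distinct points, and since the isomorphism and the cusps are compatible with the $\QQ$-structures, these are six $\QQ$-rational points, which are exactly the singular points of the branch configuration / of $\ol{M}$ sitting over $\PP^2$.

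\textbf{Main obstacle.} The genuinely nontrivial input is the rationality statement $\ol{M}/\dagger \cong \PP^2$ \emph{over $\QQ$} rather than merely over $\CC$; this is where one must combine Manoharmayum's verification that Hirzebruch's picture descends to $\QQ$ with the Severi--Brauer argument, and it hinges on producing a $\QQ$-rational point — the cusps serve this role. Everything else (the double-cover claim, the count and rationality of the six special points) then follows by bookkeeping from Hirzebruch's explicit description, so I do not anticipate difficulty there; I would simply quote \cite{Mano} and \cite{hirzebruch1} for the geometry over $\CC$ and deduce the $\QQ$-statements formally.
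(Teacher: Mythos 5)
The paper offers no proof of this proposition; the sentence preceding it makes clear that the statement is being quoted as a summary of Hirzebruch's work over $\CC$ and Manoharmayum's verification that the arguments descend to $\QQ$. Your proposal is therefore not being compared against an internal argument but against a citation, and as a reconstruction it is reasonable: citing \cite{hirzebruch1} for the $\CC$-geometry and \cite{Mano} for the $\QQ$-structure, then packaging the descent via the Severi--Brauer triviality criterion (a $\QQ$-form of $\PP^2$ with a $\QQ$-rational point is trivial, the point being supplied by the image of a cusp), is exactly the kind of argument one would expect Manoharmayum to run, and it correctly isolates what the new content over $\QQ$ is.

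Two small cautions, neither fatal. First, before invoking the Severi--Brauer criterion you tacitly use that $Y := \ol{M}/\dagger$ is smooth over $\QQ$; quotients by involutions can acquire $A_1$-singularities at isolated fixed points, so this needs a word. It does hold here because Hirzebruch's computation $Y_\CC \cong \PP^2_\CC$ forces the fixed locus of $\dagger$ to be a divisor (the degree-$10$ branch curve), so smoothness can be read off geometrically; it would be cleaner to state this explicitly rather than assert smoothness as part of the setup. Second, your heuristic that the cusps are individually $\QQ$-rational ``because $\QQ(\sqrt 5)$ has narrow class number one'' is too quick once the level-$\sqrt 5$ structure is imposed — a priori the Galois group could permute the six cusps. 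The proposition only claims the \emph{collection} is defined over $\QQ$; the stronger individual-rationality you use to produce the $\QQ$-point is correct (it is in \cite{Mano}), but should be attributed to Manoharmayum's analysis of the cusps with level structure rather than deduced from the class number alone. With those attributions tightened, the proposal matches what the paper is implicitly relying on.
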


Finally, note that since $\ol{M}/\dagger$ and $\PP^2$ are schemes of finite type over $\spec \ZZ[1/5]$, the isomorphism $\ol{M}/\dagger \rightarrow \PP^2$ of Proposition \ref{modspisomtop2} is actually defined over $\ZZ[1/N]$ for some $N\geq 1$.  Therefore, in what follows, we will assume that all primes $p > 5$ and $p$ does not divide $N$.

\subsection{Proof of Theorem \ref{mainthm}} 

Continuing with the notation of Section 4, let $k$ be a finite field of degree $d$, $W = W(k)$ its ring of Witt vectors, $K$ the fraction field of $W$ and $W_2 = W/p^2$.  


\begin{definition}
Let $$\nu_d(p) = \#\{y \in \PP^2(\ZZ/p^2): \rank_p A_{y}(W_2) \geq 2d+1\}$$ where $A_{y}$ is the abelian surface in the fiber over $y$.
\end{definition}

\begin{remark}
A priori, $A_{y}$ may not seem to be well-defined, but recall that $(A,\omega, i, \alpha_1, \alpha_2)^\dagger = (A, \omega, \tilde{i}, \alpha_1,\alpha_2)$, so the underlying abelian surface remains the same in the fiber over $y$.
\end{remark}

\begin{lemma}\label{asymptoticbounds} We have
\begin{itemize}
\item[$(a)$] $\sum_{p\leq x} \nu_d(p) \ll x^4$,

\item[$(b)$] $\sum_{p\leq x} \frac{\nu_d(p)}{p^2} \ll x^2$,

\item[$(c)$] $\sum_{p\leq x} \frac{\nu_d(p)}{p^4} \ll x$,

\item[$(d)$] $\sum_{p\leq x} \frac{\nu_d(p)}{p^6} \ll 1$.
\end{itemize}
\end{lemma}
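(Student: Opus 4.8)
The plan is to derive all four estimates from a single uniform bound on $\nu_d(p)$ together with standard analytic facts about sums over primes, so the work reduces to counting points of $\PP^2(\ZZ/p^2)$ that lie over bad fibers. First I would observe that $\#\PP^2(\ZZ/p^2) = p^4 + p^3 + p^2$, since each of the $p^2+p+1$ points of $\PP^2(\F_p)$ has exactly $p^2$ lifts to $\PP^2(\ZZ/p^2)$ (the fibers of $\PP^2(\ZZ/p^2) \to \PP^2(\F_p)$ are torsors under the tangent space, which is $(\ZZ/p)^2$ up to the scaling ambiguity — more precisely one works on a standard affine chart where a point mod $p$ has $p^2$ preimages in $\mathbf{A}^2(\ZZ/p^2)$). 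Thus trivially $\nu_d(p) \le p^4 + p^3 + p^2 \ll p^4$.

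The key refinement is that a point $y \in \PP^2(\ZZ/p^2)$ with $\rank_p A_y(W_2) \ge 2d+1$ is constrained in two ways. By Lemma \ref{W2condition}, the inequality $\rank_p A_y(W_2) \ge 2d+1$ forces $A_y(K)[p] \ne 0$, which (reducing mod $p$, using that $A(K)[p]$ injects into $A(k)[p]$ as in that lemma) forces $A_{\bar y}(k)[p] \ne 0$ for the reduction $\bar y \in \PP^2(\F_p)$; equivalently $A_{\bar y}$ picks up a $p$-torsion point over the degree-$d$ unramified extension $k$ of $\F_p$. So only those $\bar y$ lying on a certain proper closed subscheme $Z_d \subset \PP^2_{\F_p}$ contribute, and a Weil-type bound (or just the crude fact that a proper closed subset of $\PP^2_{\F_p}$ has $O(p)$ points, with the implied constant depending only on $d$ via the degree of $Z_d$) gives $\#\{\bar y\} \ll_d p$. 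Then, for each such $\bar y$, Corollary \ref{liftingcor} bounds the number of lifts $A'$ over $\ZZ/p^2\ZZ$ with $\rank_p A'(W_2) \ge 2d+1$ by $2p-1 \ll p$; translating this back to the parameter $y \in \PP^2(\ZZ/p^2)$ (using that the isomorphism $\ol M/\dagger \to \PP^2$ is defined over $\ZZ[1/N]$, so for $p \nmid N$ it identifies fibers compatibly) shows that each bad $\bar y$ has $\ll p$ bad lifts $y$. Combining, $\nu_d(p) \ll_d p \cdot p = p^2$.

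With the bound $\nu_d(p) \ll_d p^2$ in hand, parts $(b)$, $(c)$, $(d)$ follow by summing: $\sum_{p\le x} \nu_d(p)/p^2 \ll_d \sum_{p \le x} 1 \ll x/\log x \ll x$ gives $(b)$; $\sum_{p\le x}\nu_d(p)/p^4 \ll_d \sum_{p\le x} 1/p^2 \ll 1$ — wait, that would give $(c)$ as $\ll 1$, which is stronger than claimed, so a fortiori $(c)$ holds (and likewise $(d)$). Part $(a)$ needs the same $\nu_d(p) \ll_d p^2$ bound: $\sum_{p \le x} \nu_d(p) \ll_d \sum_{p\le x} p^2 \ll x^3$, again stronger than the claimed $x^4$, so $(a)$ holds too; one could alternatively get $(a)$ from the trivial bound $\nu_d(p) \ll p^4$ and $\sum_{p \le x} 1 \ll x/\log x$, wait that gives $x^5/\log x$, too weak, so the refined bound is genuinely needed for $(a)$ as well. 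In any case, only part $(a)$ requires $\nu_d(p) \ll_d p^2$ as an essential input, and the other three are then comfortably implied.

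The main obstacle is establishing $\nu_d(p) \ll_d p^2$ with a constant independent of $p$ — specifically, checking that the two counting inputs (the closed subscheme $Z_d$ cutting out bad reductions $\bar y$, and Corollary \ref{liftingcor} for bad lifts) genuinely fit together over $\PP^2(\ZZ/p^2)$. One must be careful that $A_y$ is well-defined on the $\dagger$-quotient (handled by the remark following the definition of $\nu_d$: the underlying abelian surface is unchanged by $\dagger$), that the moduli interpretation is valid for $p > 5$ and $p \nmid N$ so that a $\ZZ/p^2\ZZ$-point of $\PP^2$ really does correspond to an abelian surface over $\ZZ/p^2\ZZ$ up to isomorphism (using the fine moduli property of $M$ and Proposition \ref{modspisomtop2}), and that the ``elevated $p$-rank'' condition on $A'(W_2)$ for the base-changed surface matches the condition $\rank_p A_y(W_2) \ge 2d+1$ literally — this last compatibility is exactly what Corollary \ref{liftingcor} was set up to provide via unramified base change. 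Once these bookkeeping points are pinned down, the estimates are immediate.
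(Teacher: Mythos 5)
There is a genuine gap in your refined bound $\nu_d(p) \ll_d p^2$, and it turns out you do not need it: the paper gets by with the weaker bound $\nu_d(p) \ll p^3$, which follows from facts you already have in hand.

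The problematic step is the claim that the locus $Z_d \subset \PP^2_{\F_p}$ of points $\bar y$ with $A_{\bar y}(k)[p] \neq 0$ has $\ll_d p$ points via a ``Weil-type bound,'' with the implied constant depending only on $d$. For this to work the degree of $Z_d$ would have to be bounded independently of $p$, and nothing you wrote establishes that. The condition $A_{\bar y}(k)[p] \neq 0$ is a congruence constraint on the eigenvalues of Frobenius modulo $p$, and the closed subscheme it cuts out of $\PP^2_{\F_p}$ depends on $p$, with no a priori control on its degree as $p$ grows. (Compare the elliptic curve case in David--Weston, where bounding the number of curves with a prescribed value of $a_p \bmod p$ is a substantial piece of the argument, not a one-line Weil estimate.) So $\#\{\bar y : A_{\bar y}(k)[p] \neq 0\} \ll_d p$ is unjustified as stated, and with it the conclusion $\nu_d(p) \ll_d p^2$.

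Fortunately the lemma does not need this refinement, and your own setup already contains the correct estimate. You correctly observe that, by Lemma \ref{W2condition}, a contributing $y$ must reduce to a $\bar y \in \PP^2(\F_p)$ with $A_{\bar y}(k)[p] \neq 0$, and that Corollary \ref{liftingcor} caps the number of lifts of each such $\bar y$ with elevated $p$-rank at $2p - 1$. If you now simply bound $\#\{\bar y : A_{\bar y}(k)[p] \neq 0\}$ by the trivial $\#\PP^2(\F_p) = p^2 + p + 1$ instead of invoking $Z_d$, you get
$$\nu_d(p) \leq (2p-1)(p^2 + p + 1) \ll p^3,$$
with an absolute constant. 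This is exactly the paper's bound, and it suffices for all four parts: $\sum_{p \leq x} p^3 \ll x^4$ gives $(a)$; $\nu_d(p)/p^2 \ll p$ and $\sum_{p\leq x} p \ll x^2$ gives $(b)$; $\nu_d(p)/p^4 \ll 1/p$, so the sum is $\ll \pi(x) \ll x$ (indeed $\ll \log\log x$), giving $(c)$; and $\nu_d(p)/p^6 \ll 1/p^3$ gives a convergent sum for $(d)$. Your closing remark that ``the refined bound is genuinely needed for $(a)$ as well'' is where you went astray: you compared only against the trivial $\nu_d(p) \ll p^4$ (giving $x^5/\log x$, too weak) and missed the intermediate $p^3$ bound that Corollary \ref{liftingcor} hands you for free.
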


\begin{proof}
For (a), note that by Lemma \ref{W2condition}, \small $$\nu_d(p) = \sum_{\begin{array}{cc}y\in \PP^2(\F_p)\\ A_{y}(k)[p] \neq0\end{array}}\# (\mbox{lifts $A'$ of $A_{y}$ over $\F_p$ to $\ZZ/p^2$ with $\rank_p A'(W_2) \geq 2d+1$}).$$ \normalsize  In particular, Corollary \ref{liftingcor} yields the bound $\nu_d(p)\leq (2p-1)(p^2 + p+1),$ so $$\sum_{p\leq x} \nu_d(p) \ll x^4.$$  For the remaining bounds in (b), (c) and (d) use the estimates $$\frac{\nu_d(p)}{p^2} \ll p, \frac{\nu_d(p)}{p^4} \ll \frac{1}{p} < 1  \mbox{ \ and \ } \frac{\nu_d(p)}{p^6} \ll \frac{1}{p^3}.$$
\end{proof}

Recall that $\pi_y^d(x) = \#\{p\leq x: \mbox{$A_{y}(K)[p] \neq 0$ and $[K:\QQ_p] \leq d$}\}$.  Let $\pi_{y}^{d,{\good}}(x)$ denote the number of $p\leq x$ such that $A_y$ has a $p$-torsion point over an extension of $\QQ_p$ of degree at most $d$ and such that $A_{y}$ has good reduction at $p$.  Similarly, set $$\pi_{y}^{d,\bad}(x) = \#\{p\in \pi_{y}^d(x): \mbox{$A_y$ has bad reduction at $p$}\}.$$  So we now have $$\pi_{y}^d(x) = \pi_{y}^{d,\good}(x) + \pi_{y}^{d,\bad}(x).$$  Lastly, recall that $$S_B = \{y\in \PP^2(\QQ): H(y) \leq B\}$$ where, without loss of generality, the coordinates of $y$ are relatively prime integers $y_1, y_2$ and $y_3$.  As in the introduction, we define the height of a point in $\PP^2(\QQ)$ to be $H(y) = \max\{|y_1|,|y_2|,|y_3|\}$.   We are now ready to prove Theorem \ref{mainthm}, which we restate below.

\begin{theorem}\label{mainthmrestate}
If $B \geq x^{4/3 + \varepsilon}$ for some $\varepsilon > 0$ then $$\frac{1}{\#S_B} \sum_{[a:b:c] \in S_B} \pi^d_{[a:b:c]}(x)\ll_d 1 \mbox{ as } x\rightarrow \infty.$$
\end{theorem}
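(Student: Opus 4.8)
The plan is to split the sum over $S_B$ according to good versus bad reduction at $p$, and to control each piece by an averaging argument. First I would write
\[
\sum_{y \in S_B} \pi^d_y(x) = \sum_{y \in S_B} \pi^{d,\good}_y(x) + \sum_{y \in S_B} \pi^{d,\bad}_y(x),
\]
and handle the bad-reduction term first, since it is the easier of the two: a point $y \in S_B$ has coordinates of size at most $B$, and $A_y$ has bad reduction at $p$ only if $p$ divides some fixed discriminant-type polynomial in the coordinates of $y$ (of bounded degree), so for each $y$ there are $\ll \log B$ such primes; summing over the $\asymp B^2$ points of $S_B$ and dividing by $\#S_B \asymp B^2$ gives $\ll \log B$, which, with the hypothesis $B \geq x^{4/3+\varepsilon}$, needs a little more care — in fact one should instead count pairs $(y,p)$ with $p \mid \Delta(y)$, $p \le x$, getting $\ll B^2 \log x / x$ or so after using that each $p$ divides $\Delta(y)$ for a proportion $\ll 1/p$ of the $y$'s (here I'd want $B$ large enough that the reduction mod $p$ is close to equidistributed); dividing by $\#S_B$ this is $o(1)$.

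The main work is the good-reduction term. Here I would use Corollary \ref{ramifiedcase}: for $p - 1 > d$ (which holds for all but finitely many $p$, independently of $y$), if $A_y(K)[p] \ne 0$ for some $K/\QQ_p$ of degree $\le d$ with good reduction, then already $A_y(K^{\ur})[p] \ne 0$, and $K^{\ur}$ is unramified of degree $\le d$, hence contained in the unramified extension of degree exactly $d$ whose residue field is $k$. By Lemma \ref{W2condition}, this forces $\rank_p A_y(W_2) \ge 2d+1$. Thus, letting $y \bmod p^2 \in \PP^2(\ZZ/p^2)$ denote the reduction, the prime $p$ contributes to $\pi^{d,\good}_y(x)$ only if the reduction of $y$ lies in the set counted by $\nu_d(p)$. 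Therefore
\[
\sum_{y \in S_B} \pi^{d,\good}_y(x) \le \sum_{p \le x} \#\{y \in S_B : y \bmod p^2 \text{ is one of the } \nu_d(p) \text{ bad residues}\} + O(1),
\]
where the $O(1)$ absorbs the finitely many $p$ with $p-1 \le d$. The inner count is the number of lattice points in a box of side $\asymp B$ lying in $\nu_d(p)$ prescribed residue classes mod $p^2$; since $B \ge x^{4/3+\varepsilon} \ge p^{4/3}$ for $p \le x$, and in particular $B \gg p^2$ fails in general — so I cannot just say "$\asymp \nu_d(p) (B/p^2)^3$" — instead I'd use the standard bound $\ll \nu_d(p)\left(\frac{B^3}{p^6} + \frac{B^2}{p^4} + \frac{B}{p^2} + 1\right)$ for the number of integer points in a box of side $B$ in $\nu_d(p)$ residues mod $p^2$ (in $\PP^2$, so really three homogeneous coordinates in a box of side $B$, divided out).

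Summing this over $p \le x$ and invoking the four estimates of Lemma \ref{asymptoticbounds} term by term gives
\[
\sum_{y\in S_B}\pi^{d,\good}_y(x) \ll B^3 \sum_{p\le x}\frac{\nu_d(p)}{p^6} + B^2\sum_{p\le x}\frac{\nu_d(p)}{p^4} + B\sum_{p\le x}\frac{\nu_d(p)}{p^2} + \sum_{p\le x}\nu_d(p) \ll B^3 + B^2 x + B x^2 + x^4.
\]
Dividing by $\#S_B \asymp B^3$ (the number of primitive triples of size $\le B$, up to scaling, is $\asymp B^3/\zeta(3)$ — note this is $B^3$, not $B^2$, which is why the exponent $4/3$ appears rather than $7/4$ as in Theorem \ref{ellcurvesanalog}), the first term is $O(1)$, and the remaining three are $\ll x/B + x^2/B^2 + x^4/B^3$, each of which is $o(1)$ precisely when $B \ge x^{4/3+\varepsilon}$ (the binding constraint is $x^4/B^3 \to 0$, i.e. $B \gg x^{4/3}$). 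Combining with the bad-reduction estimate finishes the proof. \textbf{The main obstacle} I anticipate is getting the lattice-point-in-residue-classes count right when $B$ is not much larger than $p^2$ — one must use the full polynomial $B^3/p^6 + B^2/p^4 + B/p^2 + 1$ rather than the leading term, and verify that the hypothesis $B \ge x^{4/3+\varepsilon}$ is exactly what makes all the error terms vanish after division by $\#S_B$; a secondary point requiring care is confirming that the good-reduction reduction-mod-$p^2$ of a primitive triple really does land in $\PP^2(\ZZ/p^2)$ (i.e. the triple stays primitive mod $p$), which holds since $p \nmid \gcd(y_1,y_2,y_3)$ automatically for primitive $y$.
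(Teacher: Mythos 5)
Your good-reduction argument follows the paper closely: reduce to the unramified case via Corollary~\ref{ramifiedcase}, invoke Lemma~\ref{W2condition} to turn local torsion into the mod~$p^2$ rank condition counted by~$\nu_d(p)$, use the standard four-term lattice-point bound in boxes, and feed the resulting sums into Lemma~\ref{asymptoticbounds}. (One small imprecision: since the unramified extension of degree $d_0$ sits inside that of degree $d$ only when $d_0\mid d$, you cannot uniformly replace the relevant count by $\nu_d(p)$; the paper instead sums $\widetilde{\pi}^{d_0}_y(x)$ over all $d_0\le d$, bounding each by $\nu_{d_0}(p)$. Since there are only $d$ such terms this only changes the implied constant, so it is cosmetic.)

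The real gap is in your treatment of bad reduction. You model bad reduction by a single discriminant-type polynomial $\Delta$ in the coordinates, i.e.\ a codimension-one locus in $\PP^2$, so the proportion of $y\in S_B$ with bad reduction at $p$ is $\asymp 1/p$. But then $\sum_{p\le x}1/p \sim \log\log x$, and after dividing by $\#S_B$ the bad-reduction term still diverges; the intermediate quantity ``$\ll B^2\log x/x$'' that you assert does not follow from a $1/p$ density, and even your two candidate values for $\#S_B$ ($\asymp B^2$ early on, $\asymp B^3$ later) are inconsistent. The point you are missing is structural: in Manoharmayum's moduli space $\ol{M}\to\PP^2$, the locus without a moduli interpretation consists of exactly \emph{six} points in $\PP^2$ (the cusps), i.e.\ a codimension-\emph{two} locus, and $A_y$ has bad reduction at $p$ precisely when $y\bmod p$ is one of these six points. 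Hence the density of bad-reduction $y$'s is $\asymp 6/\#\PP^2(\F_p) \asymp 1/p^2$, not $1/p$, and $\sum_p 1/p^2$ converges. This is what the paper uses, together with the same $(2B/p+O(1))^3$-type box estimate, to show the bad-reduction average is bounded. Without the six-cusp observation (or some replacement for it), the naive ``assume every bad-reduction prime contributes'' bound simply does not close.
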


\begin{proof}
Write 
\begin{equation*}\label{goodbadeqn} 
\frac{1}{\#S_B} \sum_{y \in S_B} \pi^d_{y}(x) = \frac{1}{\#S_B} \left(\sum_{y \in S_B} \pi_{y}^{d,\good}(x) +\pi_{y}^{d,\bad}(x)\right).
\end{equation*}
Corollary \ref{ramifiedcase} implies that the right-hand-side equals 
$$\frac{1}{\#S_B}\left( \sum_{y \in S_B} \pi_{y}^{d,\ur}(x) + O(1) +   \sum_{y \in S_B} \pi_{y}^{d,\bad}(x)\right),$$
 where $\pi_{y}^{d,\ur}(x)$ denotes the number of $p\leq x$ such that $A_{y}$ has a $p$-torsion point over an unramified extension of $\QQ_p$ of degree at most $d$ and such that $A_{y}$ has good reduction at $p$.  

It is relatively easy to show that the sum $$\frac{1}{\#S_B} \sum_{y \in S_B} \pi_{y}^{d,\bad}(x)$$ is analytically irrelevant.  Indeed, for the primes of bad reduction, note that the six cusps of $\ol{M}$ correspond to six points in $\PP^2$ where there is no moduli interpretation (see \cite{Mano}).  That is, $A_y$ will have bad reduction at $p$ if and only if $y$ reduces to a point in $\PP^2(\F_p)$ corresponding to a cusp of $\ol{M}(\F_p)$.  With this in mind, let $\pi_{p}^{d,\bad}$ be the number of points $y$ in $S_B$ such that $A_{y}$ has a nontrivial $p$-torsion point over a degree $d$ extension of $\QQ_p$ and such that $A_{y}$ has bad reduction at $p$.  Assume that $A_{y}(K)[p]\neq  0$ whenever $A_{y}$ has bad reduction at $p$, and that $(2B/p + O(1))^3$ points in $S_B$ reduce to a given point in $\PP^2(\F_p)$.  Then even with these most naive assumptions, reversing the order of summation gives
\begin{eqnarray*}
\frac{1}{\#S_B} \sum_{y \in S_B} \pi_{y}^{d,\bad}(x)& = &\frac{1}{\#S_B} \sum_{p\leq x} \pi_p^{d,\bad}\\
& \leq &\sum_{p\leq x} \frac{6(2B/p + O(1))^3}{\#S_B}.
\end{eqnarray*} Since Schanuel \cite{Schanuel} showed that $\# S_B = B^3/\zeta(3) + O(B^2)$ where $\zeta(s)$ is the Riemann zeta function, this average is finite as $x\rightarrow \infty$.

For the sum $$\frac{1}{\#S_B}\sum_{y \in S_B} \pi_{y}^{d,\ur}(x),$$ let $K_{d_0}$ be the unramified extension of $\QQ_p$ of degree $d_0$ and let $\widetilde{\pi}_{y}^{d_0}(x)$ denote the number of primes $p \leq x$ such that $A_{y}(K_{d_0})[p] \neq 0$ and $A_y$ has good reduction at $p$.  Let $\pi_{p}^{d_0,\good}$ be the number of points  $y \in S_B$ such that $A_y(K_{d_0})[p] \neq 0$ and $A_y$ has good reduction at $p.$  Finally, note the naive estimate that $(2B/p^2 + O(1))^3$ points in $S_B$ reduce to a given point in $\PP^2(\ZZ/p^2)$.  Then reversing the order of summation and applying Lemma \ref{W2condition} gives 
\begin{eqnarray*} 
\frac{1}{\#S_B}\sum_{S_B} \widetilde{\pi}_{y}^{d_0}(x)& = &\frac{1}{\# S_B}\sum_{p\leq x} \pi_p^{d_0,\good}\\
& \leq &\frac{1}{\# S_B}\sum_{p\leq x} \left(\frac{2B}{p^2} + O(1)\right)^3\nu_{d_0}(p).
\end{eqnarray*} Expanding the right-hand-side, we have $$\frac{1}{\#S_B}\left(\sum_{p\leq x} \frac{8B^3\nu_{d_0}(p)}{p^6} + O\left(\sum_{p\leq x}\frac{4B^2\nu_{d_0}(p)}{p^4} + \sum_{[a:b:c]} \frac{2B\nu_{d_0}(p)}{p^2} + \sum_{p\leq x} \nu_{d_0}(p)\right)\right).$$ Therefore, combining Schanuel's result that $\#S_B = B^3/\zeta(3) + O(B^2)$ with the estimates of Lemma \ref{asymptoticbounds} and summing over $d_0\leq d$ yields the theorem.
\end{proof}

\end{document}